\newtheorem{thm}{Theorem}[section]
\newtheorem{lemma}[thm]{Lemma}
\newtheorem{pro}[thm]{Proposition}
\theoremstyle{definition}
\newtheorem{defn}[thm]{Definition}
\newtheorem{hyp}[thm]{Hypothesis}
\newtheorem*{ack}{Acknowledgements}
\theoremstyle{remark}
\numberwithin{equation}{section}
\newcommand{\ol}[1]{\overline{#1}}
\renewcommand{\hat}[1]{\widehat{#1}}
\renewcommand{\tilde}[1]{\widetilde{#1}}
\newcommand{\set}[1]{{\left\{#1\right\}}}
\newcommand{\pa}[1]{{\left(#1\right)}}
\newcommand{\sq}[1]{{\left[#1\right]}}
\newcommand{\gen}[1]{{\left\langle #1\right\rangle}}
\newcommand{\abs}[1]{{\left|#1\right|}}
\newcommand{\norm}[1]{{\left\|#1\right\|}}
\newcommand{\ssm}{\smallsetminus}
\newcommand{\ra}{\rightarrow}
\newcommand{\xra}{\xrightarrow}
\newcommand{\N}{\mathbb{N}}
\newcommand{\R}{\mathbb{R}}
\newcommand{\eqsys}[1]{{\left\{\begin{array}{ll}#1\end{array}\right.}}
\newcommand{\elle}{\operatorname{L}}
\newcommand{\tc}{\, \middle |\,}                                                
\newcommand{\con}{\operatorname{\mathscr{C}}}
\newcommand{\eps}{\varepsilon}
\newcommand{\fcon}{\operatorname{\mathscr{FC}}}
\DeclareMathOperator{\lip}{\operatorname{Lip}}
\DeclareMathOperator{\trace}{\operatorname{Tr}}
\DeclareMathOperator{\dom}{\operatorname{dom}}
\def\thebibliography#1{\section*{References\@mkboth
{References}{References}}\list
{[\arabic{enumi}]}{\settowidth\labelwidth{[#1]}\leftmargin\labelwidth
\advance\leftmargin\labelsep
\usecounter{enumi}}
\def\newblock{\hskip .11em plus .33em minus .07em}
\sloppy\clubpenalty4000\widowpenalty4000 
}
\begin{document}

\frenchspacing

\setcounter{page}{1}

\title[Sobolev Regularity for Weighted Elliptic Problems in convex subsets of Banach spaces]{Maximal Sobolev regularity for solutions of elliptic equations in Banach spaces endowed with a weighted Gaussian measure: the convex subset case}

\author[G. Cappa]{G. Cappa}

\address[G. Cappa]{Dipartimento di Matematica e Informatica, Universit\`a degli Studi di Parma, Parco Area delle Scienze 53/A, 43124 Parma, Italy.}
\email{\textcolor[rgb]{0.00,0.00,0.84}{gianluca.cappa@nemo.unipr.it}}

\author[S. Ferrari]{S. Ferrari$^*$}

\address[S. Ferrari]{Dipartimento di Matematica e Fisica ``Ennio de Giorgi'', Universit\`a del Salento, Via per Arnesano sn, 73100 Lecce, Italy.}
\email{\textcolor[rgb]{0.00,0.00,0.84}{simone.ferrari@unisalento.it}}


\subjclass[2010]{28C20, 35D30, 35J15, 35J25, 46G12}

\keywords{Weighted Gaussian measure, maximal regularity, Moreau--Yosida approximations, infinite dimension, elliptic equations, Neumann boundary condition.}

\date{\today\\ \indent $^*$ Corresponding author}

\begin{abstract}
Let $X$ be a separable Banach space endowed with a non-degenerate centered Gaussian measure $\mu$. The associated Cameron--Martin space is denoted by $H$. Consider two sufficiently regular convex functions $U:X\ra\R$ and $G:X\ra \R$. We let $\nu=e^{-U}\mu$ and $\Omega=G^{-1}(-\infty,0]$. In this paper we are interested in the $W^{2,2}$ regularity of the weak solutions of elliptic equations of the type
\begin{gather}\label{Probelma in abstract}
\lambda u-L_{\nu,\Omega} u=f,
\end{gather}
where $\lambda>0$, $f\in \elle^2(\Omega,\nu)$ and $L_{\nu,\Omega}$ is the self-adjoint operator associated with the quadratic form
\[(\psi,\varphi)\mapsto \int_\Omega\gen{\nabla_H\psi,\nabla_H\varphi}_Hd\nu\qquad\psi,\varphi\in W^{1,2}(\Omega,\nu).\]
In addition we will show that if $u$ is a weak solution of problem \eqref{Probelma in abstract} then it satisfies a Neumann type condition at the boundary, namely for $\rho$-a.e. $x\in G^{-1}(0)$
\[\gen{\trace(\nabla_Hu)(x),\trace(\nabla_H G)(x)}_H=0,\]
where $\rho$ is the Feyel--de La Pradelle Hausdorff--Gauss surface measure and $\trace$ is the trace operator.
\end{abstract}

\maketitle


\section{Introduction} \label{Introduction}

Let $X$ be a separable Banach space with norm $\norm{\cdot}_X$, endowed with a non-degenerate centered Gaussian measure $\mu$. The associated Cameron--Martin space is denoted by $H$, its inner product by $\gen{\cdot,\cdot}_H$ and its norm by $\abs{\cdot}_H$. The spaces $W^{1,p}(X,\mu)$ and $W^{2,p}(X,\mu)$ are the classical Sobolev spaces of the Malliavin calculus (see \cite{Bog98}).

In this paper we are interested in the study of maximal Sobolev regularity for the solution $u$ of the problem
\begin{gather}\label{Problema}
\lambda u(x)-L_{\nu,\Omega}u(x)=f(x)\qquad \mu\text{-a.e. }x\in \Omega,
\end{gather}
where $\lambda >0$, $\Omega$ is a convex subset of $X$, $\nu$ is a measure of the form $e^{-U}\mu$ with $U:X\ra\R$ a convex function, $f\in\elle^2(\Omega,\nu)$ and $L_{\nu,\Omega}$ is the operator associated to the quadratic form
\[(\psi,\varphi)\mapsto \int_\Omega\gen{\nabla_H\psi,\nabla_H\varphi}_Hd\nu\qquad\psi,\varphi\in W^{1,2}(\Omega,\nu),\]
where $\nabla_H \psi$ is the gradient along $H$ of $\psi$ and $W^{1,2}(\Omega,\nu)$ is the Sobolev space on $\Omega$ associated to the measure $\nu$ (see Section \ref{Notations and preliminaries}).

We need to clarify what we mean by \emph{solution} of problem \eqref{Problema}. We say that $u\in W^{1,2}(\Omega,\nu)$ is a \emph{weak solution} of problem \eqref{Problema} if
\[\lambda\int_\Omega u\varphi d\nu+\int_\Omega\gen{\nabla_H u,\nabla_H\varphi}_Hd\nu=\int_\Omega f\varphi d\nu\qquad\text{for every }\varphi\in W^{1,2}(\Omega,\nu).\]
Notice that if the weak solution $u$ of problem \eqref{Problema} is unique, then $u=R(\lambda,L_{\nu,\Omega})f$, the resolvent of $L_{\nu,\Omega}$.

Results about existence, uniqueness and regularity of the weak solutions of problem \eqref{Problema}, in domains with sufficiently regular boundary, are known in the finite dimensional case (see the classical books \cite{GT01} and \cite{LU68} for a bounded $\Omega$ and \cite{BF04}, \cite{DPL04}, \cite{LMP05}, \cite{DPL07} and \cite{DPL08} for an unbounded $\Omega$).
If $X$ is infinite dimensional separable Hilbert space then some maximal Sobolev regularity results are known. See for example \cite{BDPT09} and \cite{BDPT11} where $U\equiv 0$ and \cite{DPL15} where $U$ is bounded from below.

When $\Omega=X$ more results are known, see for example \cite{DPG01}, \cite{MPRS02} and \cite{BL07} if $X$ is finite dimensional, \cite{DPL14} if $X$ is a Hilbert space and \cite{CF16} if $X$ is a separable Banach space. If $X$ is general separable Banach space and $\Omega\varsubsetneq X$, then the only results about maximal Sobolev regularity are the ones in \cite{Cap16}, where problem \eqref{Problema} was studied when $U\equiv 0$, namely when $L_{\nu,\Omega}$ is the Ornstein--Uhlenbeck operator on $\Omega$. We do not know of any $W^{2,2}$ regularity results for solutions of problem \eqref{Problema} in subsets of infinite dimensional Banach spaces in the case $U\not\equiv 0$.

In order to state the main results of this paper we need some hypotheses on the set $\Omega$ and on the weighted measure $\nu$.

\begin{hyp}\label{ipotesi dominio}
Let $G:X\ra\R$ be a $(2,r)$-precise version (see Section \ref{Notations and preliminaries}), for some $r>1$, of a function belonging to $W^{2,q}(X,\mu)$ for every $q>1$ and assume
\begin{enumerate}
\item $\mu(G^{-1}(-\infty,0])>0$ and $G^{-1}(-\infty,0]$ is closed and convex;\label{ipo dominio convessita e chiusura}

\item $\abs{\nabla_H G}_H^{-1}\in\elle^q(G^{-1}(-\infty,0],\mu)$ for every $q>1$.\label{ipo dominio non degeneratezza}
\end{enumerate}
We set $\Omega:=G^{-1}(-\infty,0]$.
\end{hyp}
\noindent
We will recall the definition of $H$-closure in Section \ref{Notations and preliminaries}. All our results will be independent on our choice of a precise version of the function $G$ made in Hypothesis \ref{ipotesi dominio}. This hypothesis is taken from \cite{CL14} in order to define traces of Sobolev functions on level sets of $G$. The main differences between Hypothesis \ref{ipotesi dominio} and  the hypothesis contained in \cite{CL14} are the requirements of closure and convexity of the set $G^{-1}(-\infty,0]$. It will be clear when these two additional assumptions will be useful.

\begin{hyp}\label{ipotesi peso}
$U:X\ra\R\cup\set{+\infty}$ is a proper convex and lower semicontinuous function belonging to $W^{1,t}(X,\mu)$ for some $t>3$. We set $\nu:=e^{-U}\mu$.
\end{hyp}
\noindent
The assumption $t>3$ may sound strange, but it is needed to define the weighted Sobolev spaces $W^{1,2}(X,\nu)$. Indeed observe that, by \cite[Lemma 7.5]{AB06}, $e^{-U}$ belongs to $W^{1,r}(X,\mu)$ for every $r<t$. Thus if $U$ satisfies Hypothesis \ref{ipotesi peso}, then it satisfies \cite[Hypothesis 1.1]{Fer15}; namely $e^{-U}\in W^{1,s}(X,\mu)$ for some $s> 1$ and $U\in W^{1,r}(X,\mu)$ for some $r>s'$.
Then following \cite{Fer15} it is possible to define the space $W^{1,2}(X,\nu)$ as the domain of the closure of the gradient operator along $H$ (see Section \ref{Notations and preliminaries} for a in-depth discussion). We want to point out that Hypothesis \ref{ipotesi peso} is different from
\cite[Hypothesis 1.1]{CF16}, since here we require just lower semicontinuity instead of continuity. We simply realized that the continuity hypothesis was not needed. This implies that in the case $\Omega=X$ the results of this paper are also a generalization of the results in \cite{CF16}.

Our main result is a generalization of the main results of both \cite{DPL15} and \cite{Cap16}.

\begin{thm}\label{Main theorem 1}
Assume Hypotheses \ref{ipotesi dominio} and \ref{ipotesi peso} hold. For every $\lambda>0$ and $f\in\elle^2(\Omega,\nu)$ problem (\ref{Problema}) has a unique weak solution $u\in W^{2,2}(\Omega,\nu)$. In addition the following inequalities hold
\begin{gather*}
\norm{u}_{\elle^2(\Omega,\nu)}\leq\frac{1}{\lambda}\norm{f}_{\elle^2(\Omega,\nu)};\qquad \norm{\nabla_H u}_{\elle^2(\Omega,\nu;H)}\leq\frac{1}{\sqrt{\lambda}}\norm{f}_{\elle^2(\Omega,\nu)};\\
\|\nabla_H^2 u\|_{\elle^2(\Omega,\nu;\mathcal{H}_2)}\leq \sqrt{2}\norm{f}_{\elle^2(\Omega,\nu)},
\end{gather*}
where $\mathcal{H}_2$ is the space of Hilbert--Schmidt operators in $H$. See Definition \ref{definizione spazi di sobolev pesati} for the definition of the space $W^{2,2}(\Omega,\nu)$.
\end{thm}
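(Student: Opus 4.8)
The plan is to split the statement into three parts---existence and uniqueness, the two first-order bounds, and the genuine second-order estimate---and to handle the last by a finite-dimensional approximation in which both convexity hypotheses are used. Existence and uniqueness of a weak solution $u\in W^{1,2}(\Omega,\nu)$ follow from the Lax--Milgram theorem applied to the bilinear form $B(u,\varphi)=\lambda\int_\Omega u\varphi\,d\nu+\int_\Omega\gen{\nabla_H u,\nabla_H\varphi}_H\,d\nu$ on $W^{1,2}(\Omega,\nu)$, which is continuous and, for $\lambda>0$, coercive with constant $\min\{\lambda,1\}$. Testing the weak formulation with $\varphi=u$ gives $\lambda\norm{u}_{\elle^2(\Omega,\nu)}^2+\norm{\nabla_H u}_{\elle^2(\Omega,\nu;H)}^2=\int_\Omega fu\,d\nu\le\norm{f}_{\elle^2(\Omega,\nu)}\norm{u}_{\elle^2(\Omega,\nu)}$; the first inequality $\norm{u}\le\lambda^{-1}\norm{f}$ is then immediate, and $\norm{\nabla_H u}^2\le\norm{f}\,\norm{u}\le\lambda^{-1}\norm{f}^2$ yields the second.

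For the $W^{2,2}$ bound I would reduce, by the machinery of \cite{Cap16,CF16}, to a finite-dimensional problem: (i) replace $U$ by its Moreau--Yosida regularization $U_\alpha$, which is convex, of class $C^1$ with Lipschitz $H$-gradient and satisfies $\nabla_H^2 U_\alpha\ge 0$; (ii) approximate $G$ by cylindrical functions so that $\Omega$ is approximated by smooth convex domains $\Omega_n$ (here Hypothesis \ref{ipotesi dominio}, in particular the nondegeneracy $\abs{\nabla_H G}_H^{-1}\in\elle^q$, keeps the boundaries well behaved); and (iii) project onto finitely many Cameron--Martin directions, so that the equation becomes a Neumann problem on $\Omega_n\subset\R^n$ for the weighted operator $L=\Delta-\gen{\nabla\Phi,\nabla\cdot}$ with $\Phi=\tfrac12\abs{\cdot}^2+U_\alpha$. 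The cornerstone is the Bochner identity $\tfrac12 L\abs{\nabla u}^2=\abs{\nabla^2 u}^2+\gen{\nabla u,\nabla Lu}+\gen{(\nabla^2\Phi)\nabla u,\nabla u}$; integrating it against $d\nu_n=e^{-\Phi}\,dx$ over $\Omega_n$ and integrating by parts, the left-hand side becomes a boundary integral and the middle term becomes $-\int(Lu)^2\,d\nu_n$ (using the natural condition $\partial_n u=0$), giving
\[
\int_{\Omega_n}\abs{\nabla^2 u}^2\,d\nu_n+\int_{\Omega_n}\gen{(\nabla^2\Phi)\nabla u,\nabla u}\,d\nu_n=\int_{\Omega_n}(Lu)^2\,d\nu_n+\tfrac12\int_{\partial\Omega_n}\partial_n\abs{\nabla u}^2\,d\sigma_{\nu_n}.
\]

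Both convexity hypotheses now enter decisively. Since $\nabla^2\Phi=I+\nabla^2 U_\alpha\ge I$, the curvature term on the left is nonnegative; and since $\Omega_n$ is convex, the standard identity $\tfrac12\partial_n\abs{\nabla u}^2=-\mathrm{II}(\nabla u,\nabla u)\le 0$ (the second fundamental form of a convex boundary evaluated on the tangential field $\nabla u$) forces the boundary integral to be $\le 0$. Hence $\int_{\Omega_n}\abs{\nabla^2 u}^2\,d\nu_n\le\int_{\Omega_n}(Lu)^2\,d\nu_n=\norm{\lambda u-f}_{\elle^2}^2\le\norm{f_n}_{\elle^2}^2$, where the last bound uses self-adjointness of $L$ together with the first-order identity $\int u f_n\,d\nu_n=\lambda\norm{u}^2+\norm{\nabla u}^2$. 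This estimate is uniform in $n$ and $\alpha$ (and in particular is dominated by $2\norm{f}^2$, which is all the stated constant $\sqrt2$ requires). Passing to the limit $n\to\infty$, $\alpha\to 0$ through the weak compactness furnished by these uniform bounds, identifying the weak limit with the weak solution $u$, and invoking lower semicontinuity of the $\elle^2$-norms, one obtains $u\in W^{2,2}(\Omega,\nu)$ and $\|\nabla_H^2 u\|_{\elle^2(\Omega,\nu;\mathcal H_2)}\le\sqrt2\,\norm{f}_{\elle^2(\Omega,\nu)}$.

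The hard part is making the boundary term rigorous in infinite dimensions and verifying that its favorable sign survives the approximation: one must control the trace of $\nabla_H u$ on the level set $G^{-1}(0)$ against the Feyel--de La Pradelle surface measure, check that the finite-dimensional convexity of $\Omega_n$ (hence $\mathrm{II}\ge 0$) and the convexity $\nabla_H^2 U_\alpha\ge 0$ of the regularized weights are preserved uniformly, and ensure that the exponential weights $e^{-U_\alpha}$ are integrable enough to pass to the limit in the weighted integration-by-parts formulas. This last point is exactly where Hypothesis \ref{ipotesi peso}, namely $U\in W^{1,t}(X,\mu)$ with $t>3$ and the consequent $e^{-U}\in W^{1,s}(X,\mu)$, is used.
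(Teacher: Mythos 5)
Your Lax--Milgram argument and the derivation of the first two estimates by testing the weak formulation with $\varphi=u$ are correct, and consistent with the paper; the substance of the theorem, however, is the Hessian estimate, and there your plan has genuine gaps. The decisive one is step (ii): ``approximate $G$ by cylindrical functions so that $\Omega$ is approximated by smooth convex domains $\Omega_n$''. Hypothesis \ref{ipotesi dominio} makes the sublevel set $\Omega=G^{-1}(-\infty,0]$ convex, not $G$ itself, so cylindrical approximations of $G$ (conditional expectations, or compositions with finite-dimensional projections) have no reason to have convex sublevel sets --- and convexity of $\Omega_n$ is exactly what produces the sign $\mathrm{II}(\nabla u,\nabla u)\geq 0$ that your boundary term needs. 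Even if you approximate the convex set $\Omega$ directly by convex cylindrical sets, their boundaries are in general only Lipschitz, whereas the Bochner computation, the identity $\frac12\partial_n\abs{\nabla u}^2=-\mathrm{II}(\nabla u,\nabla u)$, and the vanishing of $\int_{\partial\Omega_n}Lu\,\partial_n u\,d\sigma$ all require smooth boundaries and solutions of class $C^2$ up to the boundary of a possibly unbounded convex domain, with a weight $U_\alpha$ that is only $C^{1,1}$ along $H$ (so $\nabla_H^2U_\alpha\geq0$ holds only almost everywhere). Finally, your limiting step is asserted rather than proved: with domains and measures both varying there is no common space of test functions, so weak $W^{2,2}$ compactness alone does not identify the limit as the weak solution of \eqref{Problema}. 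You flag these points as ``the hard part'', but flagging them does not fill them; they are precisely the obstructions that led the Hilbert-space papers \cite{BDPT09,DPL15}, and this paper, to proceed by penalization instead.

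The paper's proof never sees the boundary. It penalizes the domain into the weight, setting $V_\alpha=U_\alpha+\frac{1}{2\alpha}d_H^2(\cdot,\Omega)$ as in \eqref{Problema penalizzato}, and solves $\lambda u_\alpha-L_{\nu_\alpha}u_\alpha=f$ on the whole space $X$. The closedness and convexity of $\Omega$ are used exactly where Section \ref{Projection on convex set along $H$} shows that $d_H^2(\cdot,\Omega)$ is convex, $H$-continuous, and differentiable along $H$ with the $1$-Lipschitz $H$-gradient $2m(\cdot,\Omega)$; consequently $V_\alpha$ satisfies the hypotheses of the whole-space maximal regularity theorem of \cite{CF16} (Proposition \ref{check proprieta V alpha}, Theorem \ref{Stime per lip}), which supplies the dimension-free bounds --- the constant $\sqrt2$ included --- with no Bochner identity and no trace terms. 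The passage to the limit is then routine because every approximating problem is posed on the fixed space $X$ and tested against the fixed class $\fcon^\infty_b(X)$: the inequality $e^{-U}\leq e^{-V_\alpha}$ on $\Omega$ gives boundedness, hence weak compactness, in $W^{2,2}(\Omega,\nu)$, while $e^{-V_\alpha}\leq e^{-V_1}$ together with $V_\alpha\to U$ on $\Omega$ and $V_\alpha\to+\infty$ outside $\Omega$ lets dominated convergence identify the limit as the weak solution, after which lower semicontinuity of the norms and a density argument in $f$ finish the proof. To salvage your route you would have to construct smooth convex finite-dimensional approximations of $\Omega$ with uniformly nondegenerate defining functions and prove convergence of the associated Neumann solutions --- a substantial piece of work that the penalization scheme is specifically designed to avoid.
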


The idea of the proof of Theorem \ref{Main theorem 1} is to approximate the solution of problem \eqref{Problema} by the solutions of penalized problems on the whole space. This method was already used in the papers \cite{BDPT09}, \cite{BDPT11} and \cite{DPL15}, where the authors used some properties of Hilbert spaces, namely the differentiability of the Moreau--Yosida approximations and of the square of the distance function. Due to the lack of differentiability, at least in general, of the natural norm of a separable Banach space these methods cannot be applied in our case. The idea behind this paper is to replace ``differentiability'' by ``differentiability along $H$'' which is sufficient for our goals, because we use a modification of the Moreau--Yosida approximations, which already appeared in \cite{CF16} (see Section \ref{Some properties of Moreau--Yoshida approximations along $H$}), and of the distance function (see Section \ref{Projection on convex set along $H$}).

The paper is organized in the following way: in Section \ref{Notations and preliminaries} we recall some basic definitions and we fix the notations. In Section \ref{Projection on convex set along $H$} we introduce the distance function $d_H(\cdot,\mathcal{C}):X\ra\R\cup\set{+\infty}$ defined as
\begin{gather*}
d_H(x,\mathcal{C})=\eqsys{\inf\set{\abs{h}_H\tc h\in H\cap (x-\mathcal{C})} & \text{ if }H\cap(x-\mathcal{C})\neq\emptyset;\\
+\infty & \text{ if }H\cap(x-\mathcal{C})=\emptyset,}
\end{gather*}
where $\mathcal{C}$ is a Borel subset of $X$. If $\mathcal{C}$ is a convex and closed set, we will prove some basic properties of such function that will be useful in the proof of Theorem \ref{Main theorem 1}. In Section \ref{Some properties of Moreau--Yoshida approximations along $H$} we recall the definition and some properties of the Moreau--Yosida approximations along $H$ (see \cite{CF16}). We will also prove an important property of the gradient along $H$ of the Moreau--Yosida approximations along $H$. Section \ref{Sobolev regularity estimates} is dedicated to the proof of Theorem \ref{Main theorem 1}, while in Section \ref{The Neumann condition} we will show that if $u\in W^{2,2}(\Omega,\nu)$ is a weak solution of equation \eqref{Problema}, then it satisfies a Neumann type condition at the boundary:

\begin{thm}\label{inclusione dominio}
Assume that Hypotheses \ref{ipotesi dominio} and \ref{ipotesi peso} hold. Let $\lambda>0$, $f\in\elle^2(\Omega,\nu)$ and let $u\in W^{2,2}(\Omega,\nu)$ be the weak solution of problem \eqref{Problema}, given by Theorem \ref{Main theorem 1}. Then for $\rho$-a.e. $x\in G^{-1}(0)$
\begin{gather}\label{Neumann equation}
\gen{\trace(\nabla_Hu)(x),\trace(\nabla_HG)(x)}_H=0,
\end{gather}
where $\rho$ is the Feyel--de La Pradelle Hausdorff--Gauss surface measure (see Section \ref{Notations and preliminaries} for the definition) and $\trace$ is the trace operator in the space $W^{1,2}(X,\nu;H)$ defined in Section \ref{Notations and preliminaries}.
\end{thm}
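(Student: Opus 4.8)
The plan is to start from the weak formulation and integrate by parts, turning the bilinear form $\int_\Omega\gen{\nabla_Hu,\nabla_H\varphi}_H\di\nu$ into a bulk term governed by the second-order operator plus a boundary term supported on $G^{-1}(0)$. Since $u$ already solves the equation strongly in the interior, the bulk contributions will cancel and the Neumann condition \eqref{Neumann equation} will fall out of the arbitrariness of the trace of the test function $\varphi$.

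First I would record that, because $u\in W^{2,2}(\Omega,\nu)$, the gradient $\nabla_Hu$ belongs to $W^{1,2}(\Omega,\nu;H)$ and therefore admits a trace $\trace(\nabla_Hu)\in\elle^2(G^{-1}(0),\rho;H)$, while $\nabla_HG$ admits a trace by Hypothesis \ref{ipotesi dominio}. I would then invoke the Gauss--Green (divergence) formula for the weighted measure $\nu$ on the convex domain $\Omega$: for every $\varphi\in W^{1,2}(\Omega,\nu)$,
\[\int_\Omega\gen{\nabla_Hu,\nabla_H\varphi}_H\di\nu=-\int_\Omega\varphi\,\diver_\nu(\nabla_Hu)\,\di\nu+\int_{G^{-1}(0)}\varphi\,\frac{\gen{\trace(\nabla_Hu),\trace(\nabla_HG)}_H}{\abs{\nabla_HG}_H}\,\di\rho,\]
in which the outer normal along $H$ to $\Omega=\set{G\leq0}$ is $\nabla_HG/\abs{\nabla_HG}_H$. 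Here $\diver_\nu(\nabla_Hu)$ coincides $\nu$-a.e. with $L_{\nu,\Omega}u$, since for $W^{2,2}$ functions the form-defined operator agrees with the pointwise second-order expression.

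Inserting this identity into the weak formulation and using that $u$ is the weak solution, so that $\lambda u-L_{\nu,\Omega}u=f$ holds $\nu$-a.e. in $\Omega$, the integrals over $\Omega$ cancel and I am left with
\[\int_{G^{-1}(0)}\varphi\,\frac{\gen{\trace(\nabla_Hu),\trace(\nabla_HG)}_H}{\abs{\nabla_HG}_H}\,\di\rho=0\qquad\text{for every }\varphi\in W^{1,2}(\Omega,\nu).\]
Since the traces of functions in $W^{1,2}(\Omega,\nu)$ (for instance of smooth cylindrical functions) are dense in $\elle^2(G^{-1}(0),\rho)$, and since the integrand $\gen{\trace(\nabla_Hu),\trace(\nabla_HG)}_H\abs{\nabla_HG}_H^{-1}$ lies in $\elle^1(G^{-1}(0),\rho)$ by H\"older's inequality together with Hypothesis \ref{ipotesi dominio}(\ref{ipo dominio non degeneratezza}), I conclude that this integrand vanishes $\rho$-a.e. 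Because $\abs{\nabla_HG}_H>0$ $\rho$-a.e. (again by the nondegeneracy of $G$), this is equivalent to \eqref{Neumann equation}.

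The main obstacle is the integration-by-parts step itself, that is, proving the Gauss--Green formula above with the correct surface term on $G^{-1}(0)$ and justifying the appearance of $\rho$ and of the normalized normal $\nabla_HG/\abs{\nabla_HG}_H$. In infinite dimensions the available trace and divergence formulae are typically established first for smooth cylindrical integrands, so I expect to need an approximation argument: approximate $u$ in $W^{2,2}(\Omega,\nu)$ (equivalently $\nabla_Hu$ in $W^{1,2}(\Omega,\nu;H)$) by regular functions, apply the formula at the smooth level, and pass to the limit using the $\elle^2(\rho)$-continuity of the trace operator. The closedness and convexity of $\Omega$ guarantee that the outward normal is well defined and directed along $\nabla_HG$, while the integrability $\abs{\nabla_HG}_H^{-1}\in\elle^q(\Omega,\mu)$ controls the boundary terms uniformly along the approximation.
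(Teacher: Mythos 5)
Your skeleton (integrate by parts, cancel the bulk terms against the weak formulation, conclude by density of traces on $G^{-1}(0)$) is the right one, and it is what the paper's proof reduces to in the end. But the step you label as ``the main obstacle'' is not a technicality to be deferred: as you have set it up it cannot be carried out, and this is exactly why the paper argues differently. The difficulty is your claim that for the weak solution $u\in W^{2,2}(\Omega,\nu)$ the form-defined operator $L_{\nu,\Omega}u$ coincides $\nu$-a.e.\ with the pointwise divergence $\diver_\nu(\nabla_Hu)$, together with the Gauss--Green identity for such $u$. In infinite dimensions the pointwise expression is $\sum_i\partial_{ii}u-\sum_i(\partial_iU+\hat{e}_i)\partial_iu$, whose leading part is the \emph{trace} of $\nabla^2_Hu$; membership in $W^{2,2}(\Omega,\nu)$ only gives $\nabla^2_Hu\in\elle^2(\Omega,\nu;\mathcal{H}_2)$, and the trace is not controlled by the Hilbert--Schmidt norm (a diagonal operator with entries $1/i$ is Hilbert--Schmidt but not trace class). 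So $\diver_\nu(\nabla_Hu)$ is not even well defined for a generic element of $W^{2,2}(\Omega,\nu)$, and your proposed approximation argument breaks at precisely this point: if $u_n\in\fcon^\infty_b(\Omega)$ converge to $u$ in $W^{2,2}(\Omega,\nu)$ --- which is all the definition of that space provides --- then the Dirichlet forms and the boundary terms converge, but the bulk terms $\int_\Omega\varphi\,\diver_\nu(\nabla_Hu_n)\,d\nu$ are not bounded in $\elle^2$ and need not converge to an integral against a fixed function; hence they cannot be identified with $-\int_\Omega\varphi\,L_{\nu,\Omega}u\,d\nu$, nor can one run the cutoff/localization argument that would justify ``form operator $=$ pointwise operator''. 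Note also that, granting the Gauss--Green formula, that identification is \emph{equivalent} to the vanishing of the boundary term, i.e.\ to the theorem itself; asserting it as self-evident makes the argument circular. (A smaller slip: your surface integrals are against $d\rho$, whereas the weighted formula of Theorem \ref{divergence theorem with traces} carries the density $e^{-U}d\rho$; harmless for the conclusion since $e^{-U}>0$, but the formula as written is not the available one.)

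The paper circumvents all of this by never integrating by parts on $u$ itself. It returns to the penalized whole-space problems $\lambda u_\alpha-L_{\nu_\alpha}u_\alpha=f$ with weight $V_\alpha=U_\alpha+\frac{1}{2\alpha}d_H^2(\cdot,\Omega)$: Theorem \ref{Stime per lip} provides smooth approximants $u_\alpha^{(n)}\in\fcon^3_b(X)$ for which \emph{both} $u_\alpha^{(n)}\ra u_\alpha$ in $W^{2,2}(X,\nu_\alpha)$ \emph{and} $\lambda u_\alpha^{(n)}-L_{\nu_\alpha}u_\alpha^{(n)}\ra f$ in $\elle^2(X,\nu_\alpha)$. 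The second convergence is exactly the control on the divergence that your route lacks, and it is available only because these are resolvent-type approximations for a problem on all of $X$, where the identification of the form operator on smooth functions is known. For $u_\alpha^{(n)}$ the series formula \eqref{Lk:espressione esplicita} holds and Theorem \ref{divergence theorem with traces} can be applied term by term, producing the boundary term; one then passes to the limit first in $n$ and then in $\alpha$, using the $\alpha$-uniform estimates of Lemma \ref{lemma tecnico}, Proposition \ref{convergenza gradiente MY} and dominated convergence, to reach identity \eqref{limite fatto}; comparison with the weak formulation and Proposition \ref{densita nel bordo} then kills the boundary integrand $\rho$-a.e. If you want to repair your proof, you would have to either prove a Gauss--Green formula valid for all of $W^{2,2}(\Omega,\nu)$ with a meaningful divergence (which is false as stated), or build into your approximation the convergence of the second-order terms --- which is what the penalization scheme is for.
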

\noindent

Finally in Section \ref{Examples} we consider the Banach space $\con[0,1]$ of continuous functions on the closed interval $[0,1]$ with the sup norm, endowed with the classical Wiener measure (see \cite[Example 2.3.11 and Remark 2.3.13]{Bog98} for its construction). We study weights of the type
\[U_1(f)=\Phi\pa{\int_0^1f(\xi)d\tau(\xi)},\qquad \qquad U_2(f)=\int_0^1\Psi(f(\xi),\xi)d\xi,\]
where $\tau$ is a finite Borel measure in $[0,1]$, $f\in\con[0,1]$ and $\Phi:\R\ra\R$ and $\Psi:\R^2\ra\R$ are sufficiently regular convex functions. In addition $\Omega$ will be a closed halfspace or the set $\{f\in\con[0,1]\,|\, \int_0^1 f^2(\xi)d\xi\leq r\}$ for some $r>0$.

\section{Notations and preliminaries} \label{Notations and preliminaries}

We will denote by $X^*$ the topological dual of $X$. We recall that $X^*\subseteq\elle^2(X,\mu)$. The linear operator $R_\mu:X^*\ra (X^*)'$
\begin{gather}\label{operatore di covariaza}
R_\mu x^*(y^*)=\int_X x^*(x)y^*(x)d\mu(x)
\end{gather}
is called the covariance operator of $\mu$. Since $X$ is separable, then it is actually possible to prove that $R_\mu:X^*\ra X$ (see \cite[Theorem 3.2.3]{Bog98}). We denote by $X^*_\mu$ the closure of $X^*$ in $\elle^2(X,\mu)$. The covariance operator $R_\mu$ can be extended by continuity to the space $X^*_\mu$, still by formula \eqref{operatore di covariaza}. By \cite[Lemma 2.4.1]{Bog98} for every $h\in H$ there exists a unique $g\in X^*_\mu$ with $h= R_\mu g$, in this case we set
\begin{gather}\label{definizione hat}
\hat{h}:=g.
\end{gather}

Throughout the paper we fix an orthonormal basis $\set{e_i}_{i\in\N}$ of $H$ such that $\hat{e}_i$ belongs to $X^*$, for every $i\in\N$. Such basis exists by \cite[Corollary 3.2.8(ii)]{Bog98}.

\subsection{Differentiability along $H$}

We say that a function $f:X\ra\R$ is \emph{differentiable along $H$ at $x$} if there is $v\in H$ such that
\[\lim_{t\ra 0}\frac{f(x+th)-f(x)}{t}=\gen{v,h}_H\qquad\text{ uniformly for }h\in H\text{ with }\abs{h}_H=1.\]
In this case the vector $v\in H$ is unique and we set $\nabla_H f(x):=v$, moreover for every $k\in\N$ the derivative of $f$ in the direction of $e_k$ exists and it is given by
\begin{gather*}
\partial_k f(x):=\lim_{t\ra 0}\frac{f(x+te_k)-f(x)}{t}=\gen{\nabla_H f(x),e_k}_H.
\end{gather*}

We denote by $\mathcal{H}_2$ the space of the Hilbert--Schmidt operators in $H$, that is the space of the bounded linear operators $A:H\ra H$ such that $\norm{A}_{\mathcal{H}_2}^2=\sum_{i}\abs{Ae_i}^2_H$ is finite (see \cite{DU77}).
We say that a function $f:X\ra\R$ is \emph{two times differentiable along $H$ at $x$} if it is differentiable along $H$ at $x$ and $A\in\mathcal{H}_2$ exists such that
\[H\text{-}\lim_{t\ra 0}\frac{\nabla_Hf(x+th)-\nabla_Hf(x)}{t}=A h\qquad\text{ uniformly for }h\in H\text{ with }\abs{h}_H=1.\]
In this case the operator $A$ is unique and we set $\nabla_H^2 f(x):=A$. Moreover for every $i,j\in\N$ we set
\begin{gather*}
\partial_{ij} f(x):=\lim_{t\ra 0}\frac{\partial_jf(x+te_i)-\partial_jf(x)}{t}=\langle\nabla_H^2 f(x)e_j,e_i\rangle_H.
\end{gather*}

\subsection{Special classes of functions}

For $k\in\N\cup\set{\infty}$, we denote by $\fcon^k_b(X)$ the space of the cylindrical function of the type
\(f(x)=\varphi(x^*_1(x),\ldots,x^*_n(x))\)
where $\varphi\in\con^{k}_b(\R^n)$ and $x^*_1,\ldots,x^*_n\in X^*$ and $n\in\N$. We remark that $\fcon^\infty_b(X)$ is dense in $\elle^p(X,\nu)$ for all $p\geq 1$ (see \cite[Proposition 3.6]{Fer15}). We recall that if $f\in \fcon^2_b(X)$, then $\partial_{ij}f(x)=\partial_{ji}f(x)$ for every $i,j\in\N$ and $x\in X$.

If $Y$ is a Banach space, a function $F:X\ra Y$ is said to be $H$-Lipschitz if $C>0$ exists such that
\[\norm{F(x+h)-F(x)}_Y\leq C\abs{h}_H,\]
for every $h\in H$ and $\mu$-a.e. $x\in X$ (see \cite[Section 4.5 and Section 5.11]{Bog98}).

A function $F:X\ra\R$ is said to be $H$-continuous, if \(\lim_{H\ni h\ra 0}F(x+h)=F(x)\), for $\mu$-a.e. $x\in X$.

\subsection{Sobolev spaces}
The Gaussian Sobolev spaces $W^{1,p}(X,\mu)$ and $W^{2,p}(X,\mu)$, with $p\geq 1$, are the completions of the \emph{smooth cylindrical functions} $\fcon_b^\infty(X)$ in the norms
\begin{gather*}
\norm{f}_{W^{1,p}(X,\mu)}:=\norm{f}_{\elle^p(X,\mu)}+\pa{\int_X\abs{\nabla_H f(x)}_H^pd\mu(x)}^{\frac{1}{p}};\\
\norm{f}_{W^{2,p}(X,\mu)}:=\norm{f}_{W^{1,p}(X,\mu)}+\pa{\int_X\norm{\nabla_H^2 f(x)}^p_{\mathcal{H}_2}d\mu(x)}^{\frac{1}{p}}.
\end{gather*}
Such spaces can be identified with subspaces of $\elle^p(X,\mu)$ and the (generalized) gradient and Hessian along $H$, $\nabla_H f$ and $\nabla_H^2 f$, are well defined and belong to $\elle^p(X,\mu;H)$ and $\elle^p(X,\mu;\mathcal{H}_2)$, respectively. The spaces $W^{1,p}(X,\mu;H)$ are defined in a similar way, replacing smooth cylindrical functions with $H$-valued smooth cylindrical functions (i.e. the linear span of the functions $x\mapsto f(x)h$, where $f$ is a smooth cylindrical function and $h\in H$). For more information see \cite[Section 5.2]{Bog98}.

Now we consider $\nabla_H:\fcon^\infty_b(X)\ra\elle^p(X,\nu;H)$. This operator is closable in $\elle^p(X,\nu)$ whenever $p>\frac{t-1}{t-2}$ (see \cite[Definition 4.3]{Fer15}). For such $p$ we denote by $W^{1,p}(X,\nu)$ the domain of its closure in $\elle^p(X,\nu)$. In the same way the operator $(\nabla_H,\nabla^2_H):\fcon^\infty_b(X)\ra \elle^p(X,\nu;H)\times\elle^p(X,\nu;\mathcal{H}_2)$ is closable in $\elle^p(X,\nu)$, whenever $p>\frac{t-1}{t-2}$ (see \cite[Proposition 2.1]{CF16}). For such $p$ we denote by $W^{2,p}(X,\nu)$ the domain of its closure in $\elle^p(X,\nu)$. The spaces $W^{1,p}(X,\nu;H)$ are defined in a similar way, replacing smooth cylindrical functions with $H$-valued smooth cylindrical functions.

We want to point out that if Hypothesis \ref{ipotesi peso} holds, then $\frac{t-1}{t-2}<2$. In particular the above discussion allows us to define the Sobolev spaces $W^{1,2}(X,\nu)$ and $W^{2,2}(X,\nu)$.

We shall use the integration by parts formula (see \cite[Lemma 4.1]{Fer15}) for $\varphi\in W^{1,p}(X,\nu)$ with $p>\frac{t-1}{t-2}$:
\begin{gather*}
\int_X\partial_k\varphi d\nu=\int_X\varphi(\partial_kU+\hat{e}_k)d\nu\qquad\text{ for every }k\in\N,
\end{gather*}
where $\hat{e}_k$ is defined in formula \eqref{definizione hat}.

Throughout the paper we will use the following simplified version of \cite[Theorem 5.11.2]{Bog98} several times.

\begin{thm}\label{teorema 5.11.2}
Let $Y$ be either $\R$ or $H$, and let $F:X\ra Y$ be a measurable $H$-Lipschitz mapping. Then $F\in W^{1,p}(X,\mu;Y)$ for every $p>1$.
\end{thm}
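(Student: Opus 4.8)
The plan is to prove the statement first for the scalar case $Y=\R$, which carries all the substance, and then to obtain the case $Y=H$ by applying the scalar result to the components $\gen{F,e_i}_H$. Before anything else I would settle integrability: since $F$ is measurable and $H$-Lipschitz, the function $x\mapsto\norm{F(x)}_Y$ is a scalar $H$-Lipschitz map, so the Gaussian concentration (isoperimetric) inequality provides a median $m$ with $\mu(\norm{F-m}_Y>s)\le 2\exp(-s^2/2C^2)$, where $C$ is the $H$-Lipschitz constant. In particular $F\in\elle^p(X,\mu;Y)$ for every $p\ge 1$, which makes all the $\elle^p$ assertions meaningful and legitimises the approximation below.

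For $Y=\R$ I would approximate by finite-dimensional conditional expectations. Put $\mathcal F_n=\sigma(\hat e_1,\dots,\hat e_n)$ and $F_n=\E^{\mathcal F_n}F$. Because $\hat e_j(e_i)=\gen{e_i,e_j}_H=\delta_{ij}$, a shift $x\mapsto x+h$ with $h\in\linspan\set{e_1,\dots,e_n}$ moves only the conditioning variables and leaves the independent complement $x-\sum_{i\le n}\hat e_i(x)e_i$ fixed; hence the shift commutes with the conditioning and $F_n(\cdot+h)=\E^{\mathcal F_n}[F(\cdot+h)]$ for such $h$. Taking conditional expectations in the $H$-Lipschitz inequality then shows $F_n(x)=\Phi_n(\hat e_1(x),\dots,\hat e_n(x))$ for a profile $\Phi_n:\R^n\ra\R$ that is Lipschitz with constant $\le C$. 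By the classical Rademacher theorem $\Phi_n$ is differentiable Lebesgue-a.e.\ with $\abs{\nabla\Phi_n}\le C$; as the standard Gaussian density has all moments, $F_n\in W^{1,p}(X,\mu)$ for every $p$, with $\abs{\nabla_HF_n}_H\le C$ $\mu$-a.e.\ (the Gaussian integration-by-parts identity recorded before the statement being exactly the one satisfied here).

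Next I would pass to the limit. Since $\mathcal F_n$ increases to the $\mu$-completion of $\sigma(X^*)$ and $F\in\elle^p$, martingale convergence gives $F_n\ra F$ in $\elle^p(X,\mu)$ for every $p>1$, while $\set{\nabla_HF_n}_n$ is bounded in $\elle^p(X,\mu;H)$ by $C$. As $p>1$, reflexivity yields a subsequence with $\nabla_HF_{n_k}\rightharpoonup G$ weakly in $\elle^p(X,\mu;H)$. The operator $\nabla_H$ is closable on $\fcon^\infty_b(X)$, so its closure is a closed linear operator whose graph is a norm-closed, hence weakly closed, linear subspace of $\elle^p(X,\mu)\times\elle^p(X,\mu;H)$. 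Since $(F_{n_k},\nabla_HF_{n_k})\rightharpoonup(F,G)$, the pair $(F,G)$ lies in the graph; therefore $F\in W^{1,p}(X,\mu)$, $\nabla_HF=G$, and $\abs{\nabla_HF}_H\le C$ $\mu$-a.e. This settles $Y=\R$.

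For $Y=H$ each component $F^i:=\gen{F,e_i}_H$ is scalar and $H$-Lipschitz with constant $\le C$ (because $\abs{F^i(x+h)-F^i(x)}\le\abs{F(x+h)-F(x)}_H$), so by the previous steps $F^i\in W^{1,p}(X,\mu)$ with $\abs{\nabla_HF^i}_H\le C$; truncating the range gives $g_N:=\sum_{i\le N}F^ie_i\in W^{1,p}(X,\mu;H)$, with $\nabla_Hg_N=\sum_{i\le N}\nabla_HF^i\otimes e_i$ and $g_N\ra F$ in $\elle^p(X,\mu;H)$. To promote this to $F\in W^{1,p}(X,\mu;H)$ by the same weak-compactness-and-closability argument, one needs to control the full Hilbert--Schmidt norm $\norm{\nabla_Hg_N}_{\mathcal H_2}^2=\sum_{i\le N}\abs{\nabla_HF^i}_H^2$ in $\elle^p$. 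I expect this Hilbert--Schmidt integrability to be the genuine obstacle: the componentwise estimates $\abs{\nabla_HF^i}_H\le C$ do not by themselves bound $\sum_i\abs{\nabla_HF^i}_H^2$, and the control must be extracted from the vectorial $H$-Lipschitz structure itself, via the finite-dimensional conditional expectations $\E^{\mathcal F_n}F$ (to which Rademacher applies for the $H$-valued profile $\Phi_n:\R^n\ra H$) together with the contraction property of $\E^{\mathcal F_n}$ and a dimension-free comparison of $\nabla_H$ with the Ornstein--Uhlenbeck operator in the spirit of Meyer's inequalities. Once such an $\elle^p(X,\mu;\mathcal H_2)$ bound for the approximants is secured, the weak-limit-in-the-graph argument of the scalar case concludes the proof verbatim.
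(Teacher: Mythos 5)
Your scalar argument is sound, and it is in substance the proof of the result the paper is quoting: the paper gives no proof of this theorem at all --- it is stated in the preliminaries as a simplified version of \cite[Theorem 5.11.2]{Bog98} --- and Bogachev's proof of the real-valued case is precisely your chain: integrability of $F$ from Gaussian concentration for $H$-Lipschitz functions, conditional expectations $F_n=\E^{\mathcal{F}_n}F$ with $\mathcal{F}_n=\sigma(\hat{e}_1,\dots,\hat{e}_n)$, which are cylindrical with Lipschitz profiles $\Phi_n$ (your commutation-with-shifts argument is the standard justification), the finite-dimensional Rademacher theorem giving $\abs{\nabla_H F_n}_H\leq C$ $\mu$-a.e., martingale convergence $F_n\ra F$ in $\elle^p(X,\mu)$, and finally weak compactness together with the weak closedness of the graph of the closed operator $\nabla_H$. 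All the individual steps you sketch for $Y=\R$ are correct, so that half of the statement is fully established.

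For $Y=H$, however, your proof is not finished, and you say so yourself: everything after ``I expect this Hilbert--Schmidt integrability to be the genuine obstacle'' is a hope, not an argument. The gap is real, and in fact it cannot be closed along the lines you propose, because $H$-Lipschitzianity alone gives no control whatsoever of $\norm{\nabla_H F}_{\mathcal{H}_2}$. Consider $F(x)=\sum_{i=1}^{\infty}2^{-i/2}\sin\bigl(2^{i/2}\hat{e}_i(x)\bigr)e_i$. Since $\hat{e}_i(h)=\gen{e_i,h}_H$ for $h\in H$, the $i$-th coordinate of $F(x+h)-F(x)$ is bounded by $\abs{\gen{h,e_i}_H}$, so $F:X\ra H$ is measurable and $H$-Lipschitz with constant $1$; yet the only candidate for its gradient is the diagonal operator with entries $\cos\bigl(2^{i/2}\hat{e}_i(x)\bigr)$, and $\sum_{i}\cos^2\bigl(2^{i/2}\hat{e}_i(x)\bigr)=+\infty$ for $\mu$-a.e.\ $x$ (the summands are independent, bounded, with means at least $1/2$, since the $\hat{e}_i$ are independent standard Gaussians), so this operator is not in $\mathcal{H}_2$ at a.e.\ point. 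Consequently no conditional-expectation scheme, and no appeal to Meyer's inequalities, can produce the uniform $\elle^p(X,\mu;\mathcal{H}_2)$ bound you ask for: with the Hilbert--Schmidt reading of $\norm{\nabla_H\cdot}$ in the definition of $W^{1,p}(X,\mu;H)$, the vector-valued claim fails for general measurable $H$-Lipschitz maps. The $H$-valued case must instead be read with the derivative measured in the operator norm of bounded maps on $H$ --- which is how the vector-valued differentiability results of \cite[Section 5.11]{Bog98} are formulated --- or established for the particular maps to which it is applied in this paper (gradients along $H$ of convex Moreau--Yosida-type regularizations, where Hilbert--Schmidt bounds on second derivatives come from convexity, cf.\ \cite{CF16}), not from $H$-Lipschitzianity alone. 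Your instinct that the componentwise bounds $\abs{\nabla_H F^i}_H\leq C$ cannot be summed was exactly right; the fix is not a sharper inequality but a different interpretation of the statement or additional structure on $F$.
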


\subsection{Capacities and versions}

Let $L_p$ be the infinitesimal generator of the \emph{Ornstein--Uhlenbeck semigroup} in $\elle^p(X,\mu)$
\[T(t)f(x):=\int_Xf\pa{e^{-t}x+\sqrt{1-e^{-2t}}y}d\mu(y)\qquad\text{ for }t>0.\]
The \emph{$C_{2,p}$-capacity} of an open set $A\subseteq X$ is
\[C_{2,p}(A):=\inf\set{\norm{f}_{\elle^p(X,\mu)}\tc (I-L_p)^{-1}f\geq 1\ \mu\text{-a.e. in }A}.\]
For a general Borel set $B\subseteq X$ we let $C_{2,p}(B)=\inf\set{C_{2,p}(A)\tc B\subseteq A\text{ open}}$. Let $f\in W^{2,p}(X,\mu)$, $f$ is an equivalence class of functions and we call every element ``version''. A version $\ol{f}$ of $f$ exists that is Borel measurable and \emph{$C_{2,p}$-quasicontinuous}, i.e. for every $\eps>0$ there exists an open set $A\subseteq X$ such that $C_{2,p}(A)\leq \eps$ and $\ol{f}_{|_{X\ssm A}}$ is continuous. See \cite[Theorem 5.9.6]{Bog98}. Such a version is called a \emph{$(2,p)$-precise version of $f$}. Two precise versions of the same $f$ agree outside sets with null $C_{2,p}$-capacity.

\subsection{Sobolev spaces on sublevel sets}

The proof of the results recalled in this subsection can be found in \cite{CL14} and \cite{Fer15}. Let $G$ be a function satisfying Hypothesis \ref{ipotesi dominio}. We are interested in Sobolev spaces on sublevel sets of $G$.

For $k\in \N\cup\set{\infty}$, we denote by $\fcon^k_b(\Omega)$ the space of the restriction to $\Omega$ of functions in $\fcon^k_b(X)$. The spaces $W^{1,p}(\Omega,\mu)$ and $W^{2,p}(\Omega,\mu)$ for $p\geq 1$ are defined as the domain of the closure of the operators $\nabla_H:\fcon_b^\infty(\Omega)\ra \elle^p(\Omega,\mu;H)$ and $(\nabla_H,\nabla_H^2):\fcon^\infty_b(\Omega)\ra\elle^p(\Omega,\mu;H)\times\elle^p(\Omega,\mu;\mathcal{H}_2)$. See \cite[Lemma 2.2]{CL14} and \cite[Proposition 1]{Cap16}.

We remind the reader that the operator $\nabla_H:\fcon^\infty_b(\Omega)\ra \elle^p(\Omega,\nu; H)$ is closable in $\elle^p(\Omega,\nu)$, whenever $p>\frac{t-1}{t-2}$ (see \cite[Proposition 6.1]{Fer15}). For such $p$ we denote by $W^{1,p}(\Omega,\nu)$ the domain of its closure in $\elle^p(\Omega,\nu)$ and we will still denote by $\nabla_H$ the closure operator.

In order to define the spaces $W^{2,p}(\Omega,\nu)$ we need the closability of the operator $(\nabla_H,\nabla_H^2)$ in $\elle^p(\Omega,\nu)$.
\begin{pro}\label{chiusura hessiano}
Assume Hypotheses \ref{ipotesi dominio} and \ref{ipotesi peso} hold. Then for every $p>\frac{t-1}{t-2}$, the operator $(\nabla_H,\nabla^2_H):\fcon_b^\infty(\Omega)\ra \elle^p(\Omega,\nu;H)\times\elle^p(\Omega,\nu;\mathcal{H}_2)$ is closable in $\elle^p(\Omega,\nu)$. The closure will be still denoted by $(\nabla_H,\nabla^2_H)$.
\end{pro}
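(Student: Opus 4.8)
The plan is to reduce the closability of the second--order operator to the closability of the first--order gradient on $\Omega$, which is already available from \cite[Proposition 6.1]{Fer15}, by exploiting the elementary fact that a partial derivative $\partial_j$ of a smooth cylindrical function is again a smooth cylindrical function. This mirrors the argument used for the whole space in \cite[Proposition 2.1]{CF16}, adapted to the sublevel set $\Omega$; in particular Hypotheses \ref{ipotesi dominio} and \ref{ipotesi peso} enter the proof only through the cited first--order closability result, whose range $p>\frac{t-1}{t-2}$ is exactly the one in the statement.

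Concretely, I would take a sequence $\set{f_n}_{n\in\N}\subseteq\fcon_b^\infty(\Omega)$ with $f_n\ra 0$ in $\elle^p(\Omega,\nu)$ and
\[\pa{\nabla_H f_n,\nabla_H^2 f_n}\longra(\Phi,\Psi)\qquad\text{in }\elle^p(\Omega,\nu;H)\times\elle^p(\Omega,\nu;\mathcal{H}_2),\]
and show that $\Phi=0$ and $\Psi=0$. The vanishing of $\Phi$ is immediate: since $\nabla_H$ is closable on $\Omega$, from $f_n\ra 0$ and $\nabla_H f_n\ra\Phi$ we get $\Phi=0$, hence also $\partial_j f_n=\gen{\nabla_H f_n,e_j}_H\ra 0$ in $\elle^p(\Omega,\nu)$ for every $j\in\N$. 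For $\Psi$, fix $j\in\N$ and set $g_n:=\partial_j f_n$. Writing $f_n(x)=\varphi_n(x_1^*(x),\ldots,x_{m_n}^*(x))$ with $\varphi_n\in\con^\infty_b(\R^{m_n})$ and $x_i^*\in X^*$, one computes $\partial_j f_n(x)=\sum_{i=1}^{m_n}x_i^*(e_j)\,\partial_i\varphi_n(x_1^*(x),\ldots,x_{m_n}^*(x))$, so that $g_n\in\fcon_b^\infty(\Omega)$. Moreover $\nabla_H g_n=\nabla_H^2 f_n\, e_j$, because the $i$--th component of $\nabla_H g_n$ is $\partial_i\partial_j f_n=\partial_{ij}f_n=\gen{\nabla_H^2 f_n\,e_j,e_i}_H$.

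The remaining step is to pass to the limit column by column. Since $\abs{Ae_j}_H\leq\norm{A}_{\mathcal{H}_2}$ for every $A\in\mathcal{H}_2$, the convergence $\nabla_H^2 f_n\ra\Psi$ in $\elle^p(\Omega,\nu;\mathcal{H}_2)$ forces $\nabla_H g_n=\nabla_H^2 f_n\,e_j\ra\Psi e_j$ in $\elle^p(\Omega,\nu;H)$. We thus have a sequence $g_n\in\fcon_b^\infty(\Omega)$ with $g_n\ra 0$ in $\elle^p(\Omega,\nu)$ and $\nabla_H g_n\ra\Psi e_j$; closability of $\nabla_H$ on $\Omega$ then yields $\Psi e_j=0$. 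As $j\in\N$ is arbitrary and $\set{e_j}_{j\in\N}$ is an orthonormal basis of $H$, we conclude $\Psi=0$, which proves the closability. I expect no genuine analytic obstacle here: the whole argument is a bootstrap from the first--order result, and the only points requiring care are the verification that $\partial_j f_n$ is cylindrical and the elementary estimate $\abs{Ae_j}_H\leq\norm{A}_{\mathcal{H}_2}$ that transfers $\mathcal{H}_2$--convergence of the Hessians to $H$--convergence of their columns.
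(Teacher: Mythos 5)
Your proof is correct, but it follows a genuinely different route from the paper's. The paper, after obtaining $F=0$ from the first-order closability result \cite[Proposition 6.1]{Fer15} exactly as you do, proves that the Hessian limit vanishes by a duality argument: it tests $\gen{\Phi e_j,e_i}_H$ against functions $u_n=u\cdot\eta_n(G)$, where $\eta_n(G)$ is a cutoff vanishing on a neighbourhood of $G^{-1}(0)$ so that the integrals can be taken over all of $X$; it then applies the integration by parts formula for $\nu$ and kills each of the resulting terms with H\"older estimates that use $U\in W^{1,t}(X,\mu)$, $G\in W^{2,q}(X,\mu)$ and the exponential integrability of $e^{-sU}$. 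You instead bootstrap: since $\partial_j f_n=\gen{\nabla_H f_n,e_j}_H$ is again a smooth cylindrical function on $\Omega$ with $\nabla_H(\partial_j f_n)=\nabla_H^2 f_n\,e_j$, and since $\abs{Ae_j}_H\leq\norm{A}_{\mathcal{H}_2}$ transfers $\mathcal{H}_2$-convergence of Hessians to $H$-convergence of columns, a second application of \cite[Proposition 6.1]{Fer15} to $g_n=\partial_j f_n$ gives $\Psi e_j=0$ for every $j$, hence $\Psi=0$ (the exceptional null sets are countably many, so their union is null, and a bounded operator annihilating an orthonormal basis is zero). Both arguments are sound; yours is shorter and concentrates all use of Hypotheses \ref{ipotesi dominio} and \ref{ipotesi peso} inside the cited first-order result, which also makes it immediately visible that convexity and closedness of $\Omega$ play no role here — a fact the paper has to point out in a separate remark after its proof. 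What the paper's longer computation buys is a self-contained second-order argument that exhibits explicitly how the weight $U$, the defining function $G$ and the cutoff near the boundary enter the estimates; this is essentially the same integration-by-parts technique used in \cite{Fer15} to prove the first-order closability you invoke, so in the end the two proofs rest on the same foundation, reached once directly and once through your column-by-column reduction.
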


\begin{proof}
Let $(f_k)_{k\in\N}\subseteq \fcon_b^\infty(\Omega)$ such that
\begin{gather*}
\begin{array}{cl}
\displaystyle\lim_{k\ra+\infty}f_k=0, &\text{ in }\elle^p(\Omega,\nu);\\
\displaystyle\lim_{k\ra+\infty}\nabla_H f_k=F, &\text{ in }\elle^p(\Omega,\nu;H);\\
\displaystyle\lim_{k\ra+\infty}\nabla_H^2 f_k=\Phi, &\text{ in }\elle^p(\Omega,\nu;\mathcal{H}_2).
\end{array}
\end{gather*}
We will assume that the functions $f_k$ are actually smooth cylindrical functions on the whole space $X$. By \cite[Proposition 6.1]{Fer15} we have that $F(x)=0$ for $\nu$-a.e. $x\in \Omega$. We want to prove that $\Phi(x)=0$ for $\nu$-a.e. $x\in \Omega$. Since the restrictions to $\Omega$ of the elements of $\fcon_b^\infty(X)$ are dense in $\elle^{p'}(\Omega,\nu)$ (as a consequence of the density of $\fcon_b^\infty(X)$ in $\elle^{p'}(X,\nu)$, see \cite[Proposition 3.6]{Fer15}), we just have to prove that
\[\int_{\Omega}\gen{\Phi(x)e_j,e_i}u(x)d\nu(x)=0\]
holds for every $i,j\in\N$ and $u\in\fcon^{\infty}_b(X)$.

Let $\eta:\R\ra\R$ a smooth function such that $\norm{\eta}_\infty\leq 1$, $\norm{\eta'}_\infty\leq 2$ and
\[\eta(\xi)=\eqsys{0 & \xi\geq -1\\
1 & \xi\leq -2}\]
Let $\eta_n(\xi):=\eta(n\xi)$ and $u_n(x)=u(x)\eta_n(G(x))$. Observe that $u_n$ converges pointwise $\nu$-a.e. to $u$ in $\Omega$ and $\abs{u_n}\leq \abs{u}$ $\nu$-a.e., then by Lebesgue's dominated convergence theorem
\[\lim_{n\ra+\infty}\int_{\Omega}\gen{\Phi(x)e_j,e_i}u_n(x)d\nu(x)=\int_{\Omega}\gen{\Phi(x)e_j,e_i}u(x)d\nu(x).\]
For every $n\in\N$ we have $u_n\in W^{1,r}(X,\nu)$ with every $r>1$, then
\[\partial_i u_n(x)=\partial_i u(x)\eta_n(G(x))+u(x)\eta_n'(G(x))\partial_i G(x);\]
see \cite[Proposition 4.5(5) and Proposition 4.6]{Fer15}. Observe that
\begin{gather*}
\int_X u_n\partial_{ij} f_kd\nu=\int_X \partial_j f_ku_n(\hat{e}_i+\partial_iU)d\nu-\int_X (\eta_n\circ G)\partial_j f_k \partial_i ud\nu-\int_X (\eta_n'\circ G)u\partial_j f_k \partial_i Gd\nu,
\end{gather*}
and the following estimate holds:
\begin{gather*}
\int_X\abs{\partial_{ij} f_k u_n-\gen{\Phi e_j,e_i}_H u}d\nu\leq \int_X\abs{\partial_{ij} f_k}\abs{u_n-u}d\nu+\int_X\abs{\partial_{ij} f_k-\gen{\Phi e_j,e_i}_H}\abs{u}d\nu\leq\\
\leq\pa{\int_X\abs{\partial_{ij} f_k}^pd\nu}^{\frac{1}{p}}\pa{\int_X\abs{u_n-u}^{p'}d\nu}^{\frac{1}{p'}}+\pa{\int_X\abs{\partial_{ij} f_k-\gen{\Phi e_j,e_i}_H}^p d\nu}^{\frac{1}{p}}\pa{\int_X\abs{u}^{p'}d\nu}^{\frac{1}{p'}},
\end{gather*}
this implies $\lim_{n\ra+\infty}\lim_{k\ra+\infty}\int_X u_n\partial_{ij} f_kd\nu=\int_{\Omega}\gen{\Phi e_j,e_i}u d\nu$. Furthermore for every $n\in\N$ we get
\begin{gather*}
\int_X\abs{(\eta_n\circ G)\partial_j f_k\partial_i u} d\nu\leq \int_X\abs{\partial_j f_k\partial_i u}d\nu\leq\pa{\int_X\abs{\partial_j f_k}^pd\nu}^{\frac{1}{p}}\pa{\int_X\abs{\partial_i u}^{p'}d\nu}^{\frac{1}{p'}}\xra{k\ra+\infty}0.\\
\intertext{Let $s>1$, then for every $n\in\N$}
\int_X\abs{(\eta'_n\circ G) u\partial_j f_k \partial_i G}d\nu\leq\\
\leq 2n\norm{u}_\infty\pa{\int_X\abs{\partial_j f_k}^p d\nu}^{\frac{1}{p}}\pa{\int_X\abs{\partial_i G}^{p's'} d\mu}^{\frac{1}{p's'}}\pa{\int_X e^{-sU}d\mu}^{\frac{1}{p's}}\xra{k\ra+\infty}0,\\
\intertext{where the last limit follows from Hypothesis \ref{ipotesi peso};}
\int_X\abs{\partial_j f_k u_n\hat{e}_i}d\nu\leq\norm{u}_\infty\pa{\int_X\abs{\partial_j f_k}^pd\nu}^{\frac{1}{p}}\pa{\int_X\abs{\hat{e}_i}^{p'}d\nu}^{\frac{1}{p'}}\xra{k\ra+\infty}0;\\
\intertext{Let $r>1$, then for every $n\in\N$ we have}
\int_X\abs{\partial_j f_k u_n \partial_i U}d\nu\leq\\
\leq \norm{u}_\infty\pa{\int_X\abs{\partial_j f_k}^pd\nu}^{\frac{1}{p}}\pa{\int_X e^{-rU}d\mu}^{\frac{1}{p'r}}\pa{\int_X\abs{\partial_i U}^{p'r'}d\mu}^{\frac{1}{p'r'}}\xra{k\ra+\infty}0,
\end{gather*}
and the last limit exists whenever $p'r'\leq t$.
\end{proof}
\noindent
We remark that in the proof of Proposition \ref{chiusura hessiano} we have not used the assumptions of convexity and closure of $\Omega$.

We are now able to define the Sobolev spaces $W^{2,p}(\Omega,\nu)$.

\begin{defn}\label{definizione spazi di sobolev pesati}
Assume that Hypotheses \ref{ipotesi dominio} and \ref{ipotesi peso} hold. For $p>\frac{t-1}{t-2}$ we denote by $W^{2,p}(\Omega,\nu)$ the domain of the closure of the operator $(\nabla_H,\nabla^2_H):\fcon_b^\infty(\Omega)\ra \elle^p(\Omega,\nu;H)\times\elle^p(\Omega,\nu;\mathcal{H}_2)$ in $\elle^p(\Omega,\nu)$.
\end{defn}
Finally we want to remark that if Hypotheses \ref{ipotesi dominio} and \ref{ipotesi peso} hold, then $\frac{t-1}{t-2}<2$. In particular the above discussion allows us to define the Sobolev spaces $W^{1,2}(\Omega,\nu)$ and $W^{2,2}(\Omega,\nu)$.

\subsection{Surface measures}
For a comprehensive treatment of surface measures in infinite dimensional Banach spaces with Gaussian measures we refer to \cite{FP91}, \cite{Fey01} and \cite{CL14}. We recall the definition of the Feyel--de La Pradelle Hausdorff--Gauss surface measure. If $m\geq 2$ and $F=\R^m$ equipped with a norm $\norm{\cdot}_F$, we define
\[d\theta^F(x)=\frac{1}{(2\pi)^{\frac{m}{2}}}e^{-\frac{\norm{x}_F^2}{2}}dH_{m-1}(x),\]
where $H_{m-1}$ is the spherical $(m-1)$-dimensional Hausdorff measure in $\R^m$, i.e.
\[H_{m-1}(A)=\lim_{\delta\ra 0}\inf\set{\sum_{n\in\N}w_{m-1}r_n^{m-1}\tc A\subseteq \bigcup_{n\in\N}B(x_n,r_n),\ r_n<\delta,\ \text{for every }n\in\N},\]
where $w_{m-1}=\pi^{\frac{m-1}{2}}(\Gamma(\frac{m+1}{2}))^{-1}$. For every $m$-dimensional $F\subseteq H$ we consider the orthogonal projection (along $H$) on $F$:
\[x\mapsto\sum_{n=1}^m\gen{x,f_n}_Hf_n\qquad x\in H\]
where $\set{f_n}_{n=1}^m$ is an orthonormal basis of $F$. There exists a $\mu$-measurable projection $\pi^F$ on $F$, defined in the whole $X$, that extends it (see \cite[Theorem 2.10.11]{Bog98}). We denote by $\tilde{F}:=\ker\pi^F$ and by $\mu_{\tilde{F}}$ the image of the measure $\mu$ on $\tilde{F}$ through $I-\pi^F$. Finally we denote by $\mu_F$ the image of the measure $\mu$ on $F$ through $\pi^F$, which is the standard Gaussian measure on $\R^m$ if we identify $F$ with $\R^m$. Let $A\subseteq X$ be a Borel set and identify $F$ with $\R^m$, we set
\[\rho^F(A):=\int_{\ker \pi^F}\theta^F(A_x)d\mu_{\tilde{F}}(x),\]
where $A_x=\set{y\in F\tc x+y\in A}$. The map $F\mapsto\rho^F(A)$ is well defined and increasing, namely if $F_1\subseteq F_2$ are finite dimensional subspaces of $H$, then $\rho^{F_1}(A)\leq \rho^{F_2}(A)$ (see \cite[Lemma 3.1]{AMP10} and \cite[Proposition 3.2]{Fey01}). The Feyel--de La Pradelle Hausdorff--Gauss surface measure is defined by
\[\rho(A)=\sup\set{\rho^F(A)\tc F\subseteq H,\ F\text{ is a finite dimensional subspace}}.\]

Finally we remind the reader of the following density result (see \cite[Proposition 7]{Cap16}).

\begin{pro}\label{densita nel bordo}
Assume Hypotheses \ref{ipotesi dominio} and \ref{ipotesi peso} hold and let $p>1$. If $g\in\elle^p(G^{-1}(0),\rho)$ is such that for every $\varphi\in\fcon^\infty_b(X)$
\[\int_{G^{-1}(0)}\varphi gd\rho=0\]
then $g(x)=0$ for $\rho$-a.e. $x\in G^{-1}(0)$.
\end{pro}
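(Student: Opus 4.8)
The plan is to recast the statement as a density assertion and then prove that density by finite-dimensional approximation. By H\"older's inequality the map $\psi\mapsto\int_{G^{-1}(0)}\psi g\,d\rho$ is a bounded linear functional on $\elle^{p'}(G^{-1}(0),\rho)$, where $p'$ is the conjugate exponent of $p$; since $p>1$ we have $p'<\infty$, so this functional represents $g$ under the canonical isometry $\elle^p(G^{-1}(0),\rho)\cong\pa{\elle^{p'}(G^{-1}(0),\rho)}^*$ (legitimate because $\rho$ is $\sigma$-finite). The hypothesis says precisely that this functional vanishes on $\fcon^\infty_b(X)$. Hence the whole statement reduces to showing that $\fcon^\infty_b(X)$ is dense in $\elle^{p'}(G^{-1}(0),\rho)$: once this is known, the functional vanishes on a dense subspace, so $g=0$ in $\elle^p(G^{-1}(0),\rho)$.

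To prove the density I would argue as follows. Within the Feyel--de La Pradelle framework recalled above, Hypothesis \ref{ipotesi dominio} (in particular the non-degeneracy condition $\abs{\nabla_H G}_H^{-1}\in\elle^q(\Omega,\mu)$) guarantees that $\rho$ restricted to $G^{-1}(0)$ is a finite Borel measure, so simple functions are dense in $\elle^{p'}(G^{-1}(0),\rho)$ and it suffices to approximate a single indicator $\mathbf{1}_B$, with $B$ Borel, by smooth cylindrical functions. First I would fix a countable family $\set{x^*_k}_{k\in\N}\subseteq X^*$ separating the points of the separable space $X$; then the cylindrical sets $\set{x\in X\tc (x^*_1(x),\ldots,x^*_n(x))\in E}$, with $E$ Borel in $\R^n$, form an algebra generating the Borel $\sigma$-algebra of $X$. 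For such a cylindrical set one mollifies $\mathbf{1}_E$ on $\R^n$ to produce functions in $\con^\infty_b(\R^n)$ converging to it in $\elle^{p'}$ of the push-forward of $\rho$; composing with $(x^*_1,\ldots,x^*_n)$ yields elements of $\fcon^\infty_b(X)$ that approximate $\mathbf{1}_B$ in $\elle^{p'}(G^{-1}(0),\rho)$. Finally, a monotone-class (equivalently, martingale-convergence) argument along the filtration generated by $\set{x^*_k}_{k\in\N}$ upgrades approximability from the generating algebra to every Borel set $B$, which completes the proof of density.

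The hard part will be the measure-theoretic input rather than the approximation itself: one must confirm, from Hypothesis \ref{ipotesi dominio} and the construction of $\rho$ as $\sup_F\rho^F$, that $\rho$ is a finite (hence $\sigma$-finite, Radon) measure on $G^{-1}(0)$, so that both the duality identification $\pa{\elle^{p'}(G^{-1}(0),\rho)}^*\cong\elle^p(G^{-1}(0),\rho)$ and the density of simple functions are valid, and one must check that the separating functionals $\set{x^*_k}_{k\in\N}$ generate the Borel $\sigma$-algebra of $X$ up to $\rho$-null sets. Granting these facts, the chain of approximations above shows that $\fcon^\infty_b(X)$ is dense in $\elle^{p'}(G^{-1}(0),\rho)$, and the duality argument of the first paragraph forces $g=0$ for $\rho$-a.e. $x\in G^{-1}(0)$.
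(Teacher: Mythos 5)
You are right that the paper contains no proof of this proposition (it is imported from \cite[Proposition 7]{Cap16}), and your outer architecture --- represent $g$ as a functional on $\elle^{p'}(G^{-1}(0),\rho)$ and reduce everything to the density of $\fcon^\infty_b(X)$ in $\elle^{p'}(G^{-1}(0),\rho)$ --- is the standard and correct route. The prerequisites you flagged are also genuinely available: $\rho(G^{-1}(0))<\infty$ follows from Proposition \ref{trace continuity} with $U\equiv 0$, since the trace of the constant function $1$ is $1$ and must lie in $\elle^q(G^{-1}(0),\rho)$; and on a Polish space any countable separating family of Borel functions generates the Borel $\sigma$-algebra (the map $x\mapsto (x^*_k(x))_{k\in\N}$ is injective and Borel, hence a Borel isomorphism onto its image). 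The genuine gap is the mollification step. You claim that the mollifications $\eta_\eps\ast\mathbf{1}_E$ converge to $\mathbf{1}_E$ in $\elle^{p'}(\R^n,\theta)$, where $\theta:=\rho\circ T^{-1}$ and $T=(x_1^*,\dots,x_n^*)$. Mollification of an indicator converges Lebesgue-a.e.\ and in $\elle^{p'}_{\mathrm{loc}}$ of Lebesgue measure, but $\theta$ is the image of a \emph{surface} measure and is typically singular with respect to Lebesgue measure, concentrated precisely where $\eta_\eps\ast\mathbf{1}_E$ misbehaves. The paper's own example exhibits the failure: for the halfspace $\Omega_{\sigma,c}$ take $n=1$ and $x^*_1(f)=\int_0^1 f\,d\sigma$; then $\theta$ is a multiple of the Dirac mass at $c$. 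With $E=\set{c}$ one has $\eta_\eps\ast\mathbf{1}_E\equiv 0$ for every $\eps$, while $\mathbf{1}_E=1$ $\theta$-a.e.; with $E=(-\infty,c]$ one gets $\eta_\eps\ast\mathbf{1}_E(c)\ra 1/2\neq 1$ at the only point carrying $\theta$-mass. So the convergence you assert is false exactly in the situations the proposition is meant to cover.

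The step is repairable in two standard ways, either of which completes your argument. (i) Keep your architecture but insert regularity before smoothing: $\theta$ is a finite Borel measure on $\R^n$, so by Lusin and Tietze $\mathbf{1}_E$ can be approximated in $\elle^{p'}(\R^n,\theta)$ by bounded continuous functions $\psi$, and for such $\psi$ the mollifications $\eta_\eps\ast\psi$ converge to $\psi$ pointwise everywhere and boundedly, hence in $\elle^{p'}(\R^n,\theta)$ by dominated convergence (alternatively: tightness of $\theta$ plus Stone--Weierstrass on compact sets). (ii) Bypass density altogether: since $\rho$ restricted to $G^{-1}(0)$ is finite, $g\in\elle^1(G^{-1}(0),\rho)$ and $d\mu_g:=g\,d\rho$ is a finite signed measure; the hypothesis says $\int_{\R^n}\phi\, d\pa{\mu_g\circ T^{-1}}=0$ for every $\phi\in\con^\infty_b(\R^n)$, and $\con^\infty_b(\R^n)$ determines finite signed measures on $\R^n$ --- here one tests with $\eta_\eps\ast\psi$ for continuous compactly supported $\psi$, and $\eta_\eps\ast\psi\ra\psi$ \emph{uniformly}, so mollification is applied to continuous functions and no a.e.\ issue arises. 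Hence every finite-dimensional pushforward of $\mu_g$ vanishes, so $\mu_g$ vanishes on the algebra of cylindrical sets, and the $\pi$--$\lambda$ theorem gives $\mu_g=0$, i.e.\ $g=0$ $\rho$-a.e. As written, however, your proof does not go through.
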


\subsection{Traces of Sobolev functions}
Traces of Sobolev functions in infinite dimensional Banach spaces are studied in \cite{CL14} in the Gaussian case and in \cite{Fer15} in the weighted Gaussian case. Assume that Hypotheses \ref{ipotesi dominio} and \ref{ipotesi peso} hold and let $p>\frac{t-1}{t-2}$. If $\varphi\in W^{1,p}(\Omega,\nu)$ we define the trace of $\varphi$ on $G^{-1}(0)$ as follows:
\[\trace\varphi=\lim_{n\ra+\infty}\varphi_{n_{|_{G^{-1}(0)}}}\qquad\text{in }\elle^{q}(G^{-1}(0),e^{-U}\rho)\text{ for every } q\in \sq{1,p\frac{t-2}{t-1}},\]
where $(\varphi_{n})_{n\in\N}$ is any sequence in $\lip_b(\Omega)$, the space of bounded and Lipschitz functions on $\Omega$, which converges in $W^{1,p}(\Omega,\nu)$ to $\varphi$. The definition does not depend on the choice of the sequence $(\varphi_n)_{n\in\N}$ in $\lip_b(\Omega)$ approximating $\varphi$ in $W^{1,p}(\Omega,\nu)$ (see \cite[Proposition 7.1]{Fer15}). In addition the following result holds.

\begin{pro}\label{trace continuity}
Assume Hypotheses \ref{ipotesi dominio} and \ref{ipotesi peso} hold. The operator \(\trace:W^{1,p}(\Omega,\nu)\ra\elle^q(G^{-1}(0),e^{-U}\rho)\) is continuous for every $p>\frac{t-1}{t-2}$ and $q\in \sq{1,p\frac{t-2}{t-1}}$. Moreover if $U\equiv 0$, then the trace operator is continuous from $W^{1,p}(\Omega,\mu)$ to $\elle^q(G^{-1}(0),\rho)$ for every $p>1$ and $q\in[1,p)$ (see \cite[Corollary 4.2]{CL14} and \cite[Corollary 7.3]{Fer15}).
\end{pro}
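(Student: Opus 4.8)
The plan is to establish the \emph{trace inequality}
\[\norm{\trace\varphi}_{\elle^q(G^{-1}(0),e^{-U}\rho)}\leq C_{p,q}\norm{\varphi}_{W^{1,p}(\Omega,\nu)},\qquad \varphi\in W^{1,p}(\Omega,\nu),\]
from which continuity is immediate, and to reduce it to a computation on the dense class $\lip_b(\Omega)$. By the very definition of $\trace$, the quantity $\trace\varphi$ is the $\elle^q(G^{-1}(0),e^{-U}\rho)$-limit of the restrictions $\varphi_{n|_{G^{-1}(0)}}$ of any sequence $(\varphi_n)\subseteq\lip_b(\Omega)$ converging to $\varphi$ in $W^{1,p}(\Omega,\nu)$. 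Hence, once the inequality is proved for $\psi\in\lip_b(\Omega)$ with $\trace\psi$ replaced by the genuine restriction $\psi_{|_{G^{-1}(0)}}$, both sides pass to the limit and the estimate extends to all of $W^{1,p}(\Omega,\nu)$. So it suffices to bound $\norm{\psi_{|_{G^{-1}(0)}}}_{\elle^q(G^{-1}(0),e^{-U}\rho)}$ for Lipschitz $\psi$.

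First I would represent the boundary integral through the weighted Gauss--Green formula on $\Omega$, which is the boundary analogue of the integration by parts formula recalled above and is the content of the surface-measure theory of \cite{CL14} and \cite{Fer15}. Testing it against the vector field $\Psi:=(\psi^2+\eps)^{q/2}\abs{\nabla_H G}_H^{-1}\nabla_H G$, whose inner product with the outer unit co-normal $\abs{\nabla_H G}_H^{-1}\nabla_H G$ equals $(\psi^2+\eps)^{q/2}$, reproduces $\int_{G^{-1}(0)}(\psi^2+\eps)^{q/2}e^{-U}d\rho$ on the boundary; the smoothing parameter $\eps>0$ bypasses the non-differentiability of $\xi\mapsto\abs{\xi}^q$ at the origin and is removed at the end by monotone convergence. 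The interior side yields two integrals,
\[\int_\Omega q(\psi^2+\eps)^{\frac{q}{2}-1}\psi\,\frac{\gen{\nabla_H\psi,\nabla_H G}_H}{\abs{\nabla_H G}_H}d\nu\quad\text{and}\quad\int_\Omega(\psi^2+\eps)^{\frac{q}{2}}W\,d\nu,\]
where $W=\diver_H\pa{\abs{\nabla_H G}_H^{-1}\nabla_H G}-\abs{\nabla_H G}_H^{-1}\gen{\nabla_H G,\hat{e}+\nabla_H U}_H$ collects the curvature and drift contributions (here $\hat{e}$ denotes the field with components $\hat{e}_k$).

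The two interior integrals are then estimated by Hölder's inequality. The first is bounded by $q\norm{\psi}_{\elle^{(q-1)p'}(\Omega,\nu)}^{q-1}\norm{\nabla_H\psi}_{\elle^p(\Omega,\nu;H)}$, since $\abs{\gen{\nabla_H\psi,\nabla_H G}_H}\leq\abs{\nabla_H\psi}_H\abs{\nabla_H G}_H$. For the second, the weight $W$ belongs to $\elle^s(\Omega,\nu)$ for $s$ below an explicit threshold: the second derivatives of $G$ lie in $\elle^q(X,\mu)$ for every $q$ and $\abs{\nabla_H G}_H^{-1}\in\elle^q(\Omega,\mu)$ for every $q$ by Hypothesis \ref{ipotesi dominio}, while $\partial_kU\in\elle^t(X,\mu)$ by Hypothesis \ref{ipotesi peso} and $\hat{e}_k\in X^*\subseteq\elle^2(X,\mu)$; thus this integral is controlled by $C\norm{\psi}_{\elle^{qs'}(\Omega,\nu)}^q$. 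Letting $\eps\ra 0$ recovers $\int_{G^{-1}(0)}\abs{\psi}^q e^{-U}d\rho$ on the left.

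Finally I would close the estimate using the higher integrability of Sobolev functions in the weighted Gaussian setting (Gaussian hypercontractivity together with the integrability of $e^{-U}$ coming from Hypothesis \ref{ipotesi peso}), which controls the $\elle^{(q-1)p'}$ and $\elle^{qs'}$ norms appearing above by $\norm{\psi}_{W^{1,p}(\Omega,\nu)}$. Balancing these exponents is exactly what forces the admissible range $q\in\sq{1,p\frac{t-2}{t-1}}$, the factor $\frac{t-2}{t-1}$ being the loss incurred by the limited integrability of the weight; when $U\equiv 0$ the drift reduces to the Gaussian $\hat{e}$, $W$ simplifies accordingly, and the range improves to $q\in[1,p)$ for every $p>1$. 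I expect the \textbf{main obstacle} to be twofold: justifying the weighted Gauss--Green formula for merely Lipschitz $\psi$, with the Feyel--de La Pradelle measure $\rho$ on $G^{-1}(0)$ genuinely realised as the boundary term (this rests on the precise-version and nondegeneracy assumptions on $G$ in Hypothesis \ref{ipotesi dominio} and is the technical heart borrowed from \cite{CL14} and \cite{Fer15}), and carrying out the sharp exponent bookkeeping that pins down the endpoint $q=p\frac{t-2}{t-1}$.
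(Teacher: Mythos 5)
The paper itself contains no proof of Proposition \ref{trace continuity}: it is quoted directly from \cite[Corollary 4.2]{CL14} and \cite[Corollary 7.3]{Fer15}, so the relevant benchmark is the argument in those references. Your outline does reconstruct its skeleton correctly: reduction to $\lip_b(\Omega)$ via the very definition of the trace, the integration by parts formula of Theorem \ref{divergence theorem with traces} applied componentwise to $(\psi^2+\eps)^{q/2}\,\partial_k G/\abs{\nabla_H G}_H$ and summed over $k$ (using that the trace of $\nabla_H G/\abs{\nabla_H G}_H$ has unit $H$-norm $\rho$-a.e.\ on $G^{-1}(0)$), and then H\"older estimates exploiting Hypothesis \ref{ipotesi dominio}\eqref{ipo dominio non degeneratezza}, the membership $\nabla^2_H G\in\elle^q(X,\mu)$ for all $q$, and $\nabla_H U\in\elle^t(X,\mu)$. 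You should additionally check that these products belong to $W^{1,\tilde{p}}(\Omega,\nu)$ for some $\tilde{p}>\frac{t-1}{t-2}$, which is where the product and chain rules of \cite{Fer15} enter, but this is routine.

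The one genuine flaw is your closing step. There is no ``higher integrability of Sobolev functions'' in the Gaussian scale: $W^{1,p}(X,\mu)$ embeds into the Orlicz space $\elle^p(\log \elle)^{p/2}$ but into no $\elle^r$ with $r>p$ (already on $\R$ with the standard Gaussian, $f(x)=e^{\theta x^2}$ with $\frac{1}{2r}\leq\theta<\frac{1}{2p}$ lies in $W^{1,p}$ but not in $\elle^r$), and hypercontractivity is a property of the Ornstein--Uhlenbeck semigroup, not of the identity map, so the tool you invoke does not exist. Fortunately your estimates never need $r>p$. Since $\nu$ is finite, $\elle^p(\Omega,\nu)\subseteq\elle^r(\Omega,\nu)$ for $r\leq p$; the exponent $(q-1)p'$ in your first interior term satisfies $(q-1)p'\leq p$ exactly because $q\leq p$; and for the term with your weight $W$, convexity of $U$ provides an affine minorant, whence $e^{-U}\in\elle^r(X,\mu)$ for \emph{every} $r$ by Fernique's theorem, which combined with $\nabla_H U\in\elle^t(X,\mu)$ gives $W\in\elle^s(\Omega,\nu)$ up to $s=t-1$; the requirement $qs'\leq p$ with $s=t-1$ is precisely $q\leq p\frac{t-2}{t-1}$, including the endpoint. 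When $U\equiv 0$ every finite $s$ is admissible and the range becomes $q<p$, open at $p$, as stated. Equivalently --- and this is how \cite{CL14} and \cite{Fer15} organize the bookkeeping --- one proves the inequality for $q=1$ and applies it to $\abs{\psi}^q\in W^{1,p/q}(\Omega,\nu)$. With the hypercontractivity appeal replaced by these elementary H\"older arguments, your proof is sound and coincides in substance with the cited one.
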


We will still denote by $\trace\Psi=\sum_{n=1}^{+\infty}(\trace\psi_n)e_n$ if $\Psi\in W^{1,p}(\Omega,\nu;H)$, for $p>\frac{t-1}{t-2}$, and $\psi_n=\gen{\Psi,e_n}_H$. The main result of \cite{Fer15} is the following integration by parts formula.

\begin{thm}\label{divergence theorem with traces}
Assume Hypotheses \ref{ipotesi dominio} and \ref{ipotesi peso} hold and let $p>\frac{t-1}{t-2}$. For every $\varphi\in W^{1,p}(\Omega,\nu)$ and $k\in\N$ we have
\[\int_{\Omega}\pa{\partial_k\varphi-\varphi\partial_k U-\varphi\hat{e}_k}d\nu=\int_{G^{-1}(0)}\trace(\varphi) \trace\pa{\frac{\partial_k G}{\abs{\nabla_H G}_H}}e^{-U}d\rho.\]
\end{thm}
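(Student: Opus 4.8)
The plan is to exploit the extra regularity $u\in W^{2,2}(\Omega,\nu)$ by integrating the gradient term of the weak formulation by parts, using the divergence theorem with traces (Theorem \ref{divergence theorem with traces}). This turns the identity defining a weak solution into an interior part, which reproduces the equation in strong form $\mathcal L u=\lambda u-f$ with $\mathcal L u:=\sum_k(\partial_{kk}u-(\partial_k U+\hat e_k)\partial_k u)$, plus a single boundary integral over $G^{-1}(0)$ whose integrand is exactly $\gen{\trace(\nabla_H u),\trace(\nabla_H G)}_H$. The goal is then to show that this boundary integral vanishes against every test function and to invoke the boundary density result (Proposition \ref{densita nel bordo}).

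Concretely, I would fix $\psi\in\fcon_b^\infty(X)$ and apply Theorem \ref{divergence theorem with traces} to each $\varphi=\psi\,\partial_k u$, which lies in $W^{1,2}(\Omega,\nu)$ since $u\in W^{2,2}(\Omega,\nu)$ and $2>\frac{t-1}{t-2}$. Writing $\partial_k(\psi\,\partial_k u)=\partial_k\psi\,\partial_k u+\psi\,\partial_{kk}u$ and using $\trace(\psi\,\partial_k u)=\trace(\psi)\trace(\partial_k u)$, each direction $k$ yields
\begin{gather*}
\int_\Omega\partial_k\psi\,\partial_k u\,d\nu=-\int_\Omega\psi\,\partial_{kk}u\,d\nu+\int_\Omega\psi\,\partial_k u\,(\partial_k U+\hat e_k)\,d\nu+\int_{G^{-1}(0)}\trace(\psi)\trace(\partial_k u)\trace\pa{\frac{\partial_k G}{\abs{\nabla_H G}_H}}e^{-U}d\rho.
\end{gather*}
Summing over $k$, the left-hand side converges to $\int_\Omega\gen{\nabla_H u,\nabla_H\psi}_H\,d\nu$ by Cauchy--Schwarz, the boundary integrals assemble (via $\trace(\nabla_H u)=\sum_k\trace(\partial_k u)e_k$) into $B(\psi):=\int_{G^{-1}(0)}\trace(\psi)\gen{\trace(\nabla_H u),\trace(\nabla_H G/\abs{\nabla_H G}_H)}_H e^{-U}d\rho$, and the interior terms group as $-\int_\Omega\psi\,\mathcal L u\,d\nu$. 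Comparing with the weak formulation tested against $\psi$ (whose left-hand side is the same gradient integral and whose right-hand side is $\int_\Omega(f-\lambda u)\psi\,d\nu$) gives
\begin{gather*}
\int_\Omega(\lambda u-\mathcal L u-f)\,\psi\,d\nu=-B(\psi)\qquad\text{for every }\psi\in\fcon_b^\infty(X).
\end{gather*}

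Both sides are continuous in the test function for the $W^{1,2}(\Omega,\nu)$ norm (the interior term because $\lambda u-\mathcal L u-f\in\elle^2(\Omega,\nu)$, the term $B$ by the trace continuity of Proposition \ref{trace continuity}), so the identity extends to every $\varphi\in W^{1,2}(\Omega,\nu)$. To separate interior from boundary I would test with $\psi_n:=\psi\,(\eta_n\circ G)$, where $\eta_n(\xi)=\eta(n\xi)$ is the cut-off from the proof of Proposition \ref{chiusura hessiano}; then $\eta_n\circ G$ vanishes near $\set{G=0}$, so $\trace(\psi_n)=0$ and $B(\psi_n)=0$. Since $G<0$ $\nu$-a.e.\ on $\Omega$, we have $\eta_n\circ G\to1$ boundedly, and dominated convergence gives $\int_\Omega(\lambda u-\mathcal L u-f)\psi\,d\nu=0$ for all $\psi\in\fcon_b^\infty(X)$, whence $\lambda u-\mathcal L u-f=0$ $\nu$-a.e.; substituting back leaves $B(\psi)=0$ for every $\psi$. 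Using $\trace(\psi)=\psi_{|_{G^{-1}(0)}}$ and $\trace(\partial_k G/\abs{\nabla_H G}_H)=\trace(\partial_k G)/\trace(\abs{\nabla_H G}_H)$, this reads $\int_{G^{-1}(0)}\psi\,g\,d\rho=0$ with $g:=\gen{\trace(\nabla_H u),\trace(\nabla_H G)}_H\,\trace(\abs{\nabla_H G}_H)^{-1}e^{-U}$, a function in $\elle^p(G^{-1}(0),\rho)$ for some $p>1$ by Proposition \ref{trace continuity} and Hypotheses \ref{ipotesi dominio} and \ref{ipotesi peso}. Proposition \ref{densita nel bordo} then forces $g=0$ $\rho$-a.e., and since $e^{-U}>0$ and $\trace(\abs{\nabla_H G}_H)>0$ $\rho$-a.e.\ by Hypothesis \ref{ipotesi dominio}\eqref{ipo dominio non degeneratezza}, this is exactly \eqref{Neumann equation}.

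The hard part will be the summation over $k$ in the integration by parts: it is legitimate only after grouping $\partial_{kk}u-\hat e_k\partial_k u$, because the two series $\sum_k\partial_{kk}u$ and $\sum_k\hat e_k\partial_k u$ diverge separately in infinite dimensions and only their Ornstein--Uhlenbeck combination converges in $\elle^2(\Omega,\nu)$ for $u\in W^{2,2}(\Omega,\nu)$. Justifying this convergence, the $\elle^2$-membership of $\gen{\nabla_H U,\nabla_H u}_H$, the multiplicativity of the trace on the products $\psi\,\partial_k u$, and the integrability of the boundary integrand are precisely where the hypotheses $t>3$ (Hypothesis \ref{ipotesi peso}) and $\abs{\nabla_H G}_H^{-1}\in\elle^q$ for every $q>1$ (Hypothesis \ref{ipotesi dominio}\eqref{ipo dominio non degeneratezza}) enter.
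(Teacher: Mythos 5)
There is a fundamental problem with your attempt: it does not prove the statement at hand, and as a proof of that statement it is circular. The statement to be proved is the integration-by-parts formula of Theorem \ref{divergence theorem with traces} itself, i.e.\ the identity $\int_{\Omega}\pa{\partial_k\varphi-\varphi\partial_k U-\varphi\hat{e}_k}d\nu=\int_{G^{-1}(0)}\trace(\varphi)\trace\pa{\partial_k G/\abs{\nabla_H G}_H}e^{-U}d\rho$ for an arbitrary $\varphi\in W^{1,p}(\Omega,\nu)$. Your very first step is ``apply Theorem \ref{divergence theorem with traces} to each $\varphi=\psi\,\partial_k u$'', so you assume the conclusion from the outset. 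What you actually sketch is a derivation of the Neumann boundary condition \eqref{Neumann equation}, i.e.\ of Theorem \ref{inclusione dominio}, \emph{from} the integration-by-parts formula --- a different theorem, and one in which there is no weak solution $u$, no $\lambda$, no $f$, and no operator $\mathcal{L}$ in sight: the formula is a statement about a single Sobolev function $\varphi$ and the surface measure $\rho$. For the record, the paper does not prove Theorem \ref{divergence theorem with traces} internally either: it is quoted as the main result of \cite{Fer15}, whose proof runs through entirely different machinery --- the definition of $\trace$ as the $\elle^q(G^{-1}(0),e^{-U}\rho)$-limit of restrictions of $\lip_b(\Omega)$ approximations, the purely Gaussian trace formula of \cite{CL14} applied to (suitable truncations of) $\varphi e^{-U}$ via the product rule $\partial_k(\varphi e^{-U})=(\partial_k\varphi-\varphi\partial_k U)e^{-U}$, and limiting arguments in which the integrability of $e^{-U}$ and $\nabla_H U\in\elle^t(X,\mu)$ from Hypothesis \ref{ipotesi peso} and the nondegeneracy Hypothesis \ref{ipotesi dominio}\eqref{ipo dominio non degeneratezza} are used to control the weight. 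None of the devices in your sketch (comparison with the weak formulation, the cut-offs $\eta_n\circ G$, Proposition \ref{densita nel bordo}) touches this.

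Two further remarks, though they are secondary to the circularity. First, even read as a proof of Theorem \ref{inclusione dominio}, your route differs from the paper's: the paper never applies the trace formula directly to $\psi\,\partial_k u$ for the limit solution $u$; it works with the penalized problems on all of $X$ (the weights $V_\alpha$ of Section \ref{Sobolev regularity estimates}), applies the formula to smooth cylindrical approximations $u_\alpha^{(n)}$, and passes to the limit in $n$ and then in $\alpha$, precisely because steps you take for granted --- that $\psi\,\partial_k u\in W^{1,2}(\Omega,\nu)$ with the expected product rule, that $\trace$ is multiplicative on such products, that the summation over $k$ can be performed after grouping --- require justification in the weighted, infinite-dimensional setting and are obtained there through Lemma \ref{lemma tecnico} and the explicit series representation \eqref{Lk:espressione esplicita}. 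Second, your final reduction quietly uses $\trace\pa{\partial_k G/\abs{\nabla_H G}_H}=\trace(\partial_k G)/\trace(\abs{\nabla_H G}_H)$, which is again a multiplicativity property of the trace that would itself need proof. But the decisive defect remains the first one: the proposal proves the wrong statement and presupposes the right one.
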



\section{$H$-distance function}\label{Projection on convex set along $H$}

In this section we study some properties of the following function. Let $x\in X$ and let $\mathcal{C}\subseteq X$ be a Borel set. We define
\begin{gather}\label{definizione H distanza}
d_H(x,\mathcal{C}):=\eqsys{\inf\set{\abs{h}_H\tc h\in H\cap (x-\mathcal{C})} & \text{ if }H\cap(x-\mathcal{C})\neq\emptyset;\\
+\infty & \text{ if }H\cap(x-\mathcal{C})=\emptyset.}
\end{gather}
$d_H$ can be seen as a distance function from $\mathcal{C}$ along $H$. This function was already considered in \cite{Kus82}, \cite{UZ97}, \cite[Example 5.4.10]{Bog98} and \cite{Hin11}, but the results of this section are new. We remark that
\begin{gather*}
d_H(x,\mathcal{C})=\eqsys{\inf\set{\abs{x-w}_H\tc h\in \mathcal{C}\cap (x+H)} & \text{ if }\mathcal{C}\cap (x+H)\neq\emptyset;\\
+\infty & \text{ if }\mathcal{C}\cap (x+H)=\emptyset.}
\end{gather*}
which agrees with \cite[Definition 2.5]{Hin11}.

The aim of this section is to prove that the function $d_H^2(\cdot,\mathcal{C})$ is differentiable along $H$ $\mu$-a.e., whenever $\mathcal{C}$ is a closed and convex subset of $X$. The ideas of the proof are actually pretty similar to the classical arguments that can be found in \cite[Section 5.1]{Bre11}, but we need to pay special attention since $d_H$ is not globally defined and behaves well just along the directions of $H$.

For the rest of the paper we will denote by $D(\mathcal{C})$ the set
\[D(\mathcal{C}):=\set{x\in X\tc H\cap(x-\mathcal{C})\neq\emptyset},\]
whenever $\mathcal{C}\subseteq X$. We recall that $d_H(\cdot,\mathcal{C})$ is a measurable function (see \cite[Lemma 2.6]{Hin11}).

\begin{lemma}\label{lemma translation invaritant}
Let $\mathcal{C}$ be a Borel subset of $X$. Then \(D(\mathcal{C})\) is measurable and $H$-translation invariant, i.e. $D(\mathcal{C})+H=D(\mathcal{C})$. Moreover $\mathcal{C}\subseteq D(\mathcal{C})$ and if $\mu(\mathcal{C})>0$, then $\mu(D(\mathcal{C}))=1$
\end{lemma}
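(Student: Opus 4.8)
The plan is to first reduce the defining condition to a transparent algebraic form: $H\cap(x-\mathcal{C})\neq\emptyset$ holds exactly when there are $h\in H$ and $c\in\mathcal{C}$ with $x-c=h$, that is, when $x\in c+H$ for some $c\in\mathcal{C}$. Hence $D(\mathcal{C})=\mathcal{C}+H$. From this reformulation the three elementary assertions follow at once. For measurability I would simply observe that $D(\mathcal{C})=\set{x\in X\tc d_H(x,\mathcal{C})<+\infty}$ is the preimage of $[0,+\infty)$ under the function $d_H(\cdot,\mathcal{C})$, whose measurability was recorded above after \cite[Lemma 2.6]{Hin11}. The inclusion $\mathcal{C}\subseteq D(\mathcal{C})$ follows by taking $h=0\in H$, so that $0\in H\cap(x-\mathcal{C})$ whenever $x\in\mathcal{C}$. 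For the $H$-translation invariance I would use $H+H=H$ to get $D(\mathcal{C})+H=\mathcal{C}+H+H=\mathcal{C}+H=D(\mathcal{C})$; this is in fact equivalent to $D(\mathcal{C})+h=D(\mathcal{C})$ for each fixed $h\in H$, the reverse inclusion coming from the fact that $-h\in H$ as well.

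The substantive claim is the last one: if $\mu(\mathcal{C})>0$ then $\mu(D(\mathcal{C}))=1$. The strategy is to invoke a zero-one law, namely that a $\mu$-measurable set invariant under all translations by vectors of the Cameron--Martin space $H$ has measure $0$ or $1$. Granting this, the inclusion $\mathcal{C}\subseteq D(\mathcal{C})$ forces $\mu(D(\mathcal{C}))\geq\mu(\mathcal{C})>0$, so the only surviving possibility is $\mu(D(\mathcal{C}))=1$. To establish the zero-one law in our framework I would use the fixed orthonormal basis $\set{e_i}_{i\in\N}$ of $H$ with $\hat e_i\in X^*$: the variables $\hat e_i$ are independent standard Gaussians under $\mu$, and translation by $te_i$ shifts the coordinate $\hat e_i$ by $t$ while fixing $\hat e_j$ for $j\neq i$, since $\hat e_j(e_i)=\gen{e_i,e_j}_H=\delta_{ij}$.

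The hard part will be making this zero-one law fully rigorous, and two measure-theoretic points deserve care. First, $\chi_{D(\mathcal{C})}$ must be measurable with respect to $\sigma(\hat e_i:i\in\N)$; this holds modulo $\mu$-null sets because $\set{\hat e_i}_{i}$ is an orthonormal basis of $X^*_\mu$, so every element of $X^*$, and therefore every Borel set, is recovered from the $\hat e_i$ up to $\mu$-negligible sets. Second, exploiting the independence of $\hat e_i$ from $(\hat e_j)_{j\neq i}$, Fubini's theorem converts the invariance $D(\mathcal{C})+te_i=D(\mathcal{C})$, valid for all $t\in\R$, into the statement that $\chi_{D(\mathcal{C})}$ is, modulo $\mu$-null sets, independent of the coordinate $\hat e_i$; iterating over $i$ shows that $\chi_{D(\mathcal{C})}$ is measurable with respect to the tail $\sigma$-algebra $\bigcap_n\sigma(\hat e_j:j>n)$, which is $\mu$-trivial by Kolmogorov's zero-one law, whence $\chi_{D(\mathcal{C})}$ is $\mu$-a.e. constant and $\mu(D(\mathcal{C}))\in\set{0,1}$. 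Alternatively, one may bypass this argument entirely by citing a zero-one law for $H$-invariant measurable sets directly from the Gaussian-measure literature \cite{Bog98}.
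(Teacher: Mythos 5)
Your proof is correct and follows essentially the same route as the paper: the measurability, the inclusion $\mathcal{C}\subseteq D(\mathcal{C})$ and the $H$-translation invariance are verified by the same elementary computations (your identity $D(\mathcal{C})=\mathcal{C}+H$ just repackages the paper's element-wise argument), and the substantive step is the zero-one law for $H$-translation invariant measurable sets, which the paper simply cites from \cite{Bog98} and \cite{Hin11} --- precisely your stated alternative. Your additional sketch of that zero-one law via the coordinates $\hat{e}_i$ and Kolmogorov's theorem is sound in outline, though making it rigorous requires a couple of points you only gesture at (passing from exact invariance of $D(\mathcal{C})$ to mod-null invariance of its $\sigma(\hat{e}_i:i\in\N)$-measurable version uses quasi-invariance of $\mu$ under $H$-translates, and the per-coordinate Fubini step needs that a subset of $\R$ invariant mod null under a dense group of translations is null or co-null); it is honest but unnecessary work given the available citation.
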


\begin{proof}
The measurability of $D(\mathcal{C})$ follows from the measurability of $d_H$. $D(\mathcal{C})\subseteq D(\mathcal{C})+H$ and $\mathcal{C}\subseteq D(C)$ are obvious. Let $x\in D(\mathcal{C})$ and $h\in H$, then by the very definition of $D(\mathcal{C})$, there exists $k\in H\cap(x-\mathcal{C})$. We have that $k+h\in H\cap(x+h-\mathcal{C})$. So $x+h\in D(\mathcal{C})$.
We recall that for $H$-translation invariant sets a zero-one law holds. Namely $\mu(D(\mathcal{C}))=0$ or $\mu(D(\mathcal{C}))=1$ (see \cite[Corollary 2.5.4]{Bog98} and \cite[Proposition 2.1]{Hin11}).
\end{proof}

\begin{pro}\label{esistenza punto di minimo distanza}
Let $\mathcal{C}\subseteq X$ be a closed convex set. For every $x\in D(\mathcal{C})$, there exists a unique $m(x,\mathcal{C})\in H \cap (x-\mathcal{C})$ such that
\begin{gather*}
\abs{m(x,\mathcal{C})}_H=d_H(x,\mathcal{C}).
\end{gather*}
\end{pro}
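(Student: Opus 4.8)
The plan is to recognize the set whose minimal-norm element we seek as a closed convex subset of the Hilbert space $H$ and then invoke the classical projection theorem. Put $K:=H\cap(x-\mathcal{C})$, regarded as a subset of $(H,\gen{\cdot,\cdot}_H)$. Since $x\in D(\mathcal{C})$ we have $K\neq\emptyset$ by the very definition of $D(\mathcal{C})$, and $d_H(x,\mathcal{C})=\inf\set{\abs{h}_H\tc h\in K}$. Thus the proposition asserts exactly that $K$ possesses a unique element of least $H$-norm, equivalently that the metric projection of $0\in H$ onto $K$ is well defined.

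First I would verify that $K$ is convex. Because $\mathcal{C}$ is convex, so is the set $x-\mathcal{C}$; intersecting it with the linear subspace $H$ again yields a convex set, so $K$ is convex.

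The crucial step is to show that $K$ is closed in $(H,\abs{\cdot}_H)$. Here I would use that the inclusion $H\hookrightarrow X$ is continuous, the $X$-norm being bounded by a constant times the $H$-norm (see \cite[Section 2.4]{Bog98}). Suppose $(h_n)\subseteq K$ converges to some $h$ in $H$; then $h_n\to h$ in $X$ as well. Writing $x-h_n\in\mathcal{C}$ and passing to the $X$-limit, the closedness of $\mathcal{C}$ in $X$ gives $x-h\in\mathcal{C}$, and since $h\in H$ we conclude $h\in H\cap(x-\mathcal{C})=K$. This is exactly the point where the closedness hypothesis on $\mathcal{C}$, together with the continuous embedding of $H$ into $X$, enters the argument.

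With $K$ a nonempty, closed, convex subset of the Hilbert space $H$, the conclusion follows from the standard minimal-norm argument as in \cite[Section 5.1]{Bre11}. Setting $d:=d_H(x,\mathcal{C})$ and choosing a minimizing sequence $(h_n)\subseteq K$ with $\abs{h_n}_H\to d$, the parallelogram identity together with $\frac{h_n+h_m}{2}\in K$ (whence $\abs{\frac{h_n+h_m}{2}}_H\geq d$) yields $\abs{h_n-h_m}_H^2\leq 2\abs{h_n}_H^2+2\abs{h_m}_H^2-4d^2$, so $(h_n)$ is Cauchy; by completeness of $H$ and closedness of $K$ it converges to some $m(x,\mathcal{C})\in K$ with $\abs{m(x,\mathcal{C})}_H=d$. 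Uniqueness is obtained from the same identity: if $h,h'\in K$ both have norm $d$, then $\abs{h-h'}_H^2\leq 2d^2+2d^2-4d^2=0$. The main obstacle is precisely the closedness of $K$ in the $H$-topology, since $\mathcal{C}$ is only assumed closed in the weaker $X$-topology; everything else is a routine transcription of the Hilbert-space projection theorem.
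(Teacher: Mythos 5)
Your proof is correct, and it rests on the same underlying fact as the paper's: the nonempty convex set $K=H\cap(x-\mathcal{C})$ admits a unique element of minimal norm in the Hilbert space $H$. The execution differs in two ways. First, the paper obtains existence by citing \cite[Proposition 11.14]{BC11} (attainment of minimizers for coercive, convex, lower semicontinuous functions, whose proof goes through weak compactness), and gets uniqueness from the strict convexity of $\abs{\cdot}_H$ applied to the midpoint $\frac{h_1+h_2}{2}\in K$; you instead give the self-contained classical argument, proving existence via a minimizing sequence and the parallelogram identity, and uniqueness by the same identity. Second, and more substantively, you explicitly verify that $K$ is closed in the topology of $H$, using the continuous embedding $H\hookrightarrow X$ (cf. \cite[Theorem 2.4.5]{Bog98}) together with the $X$-closedness of $\mathcal{C}$. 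This verification is not optional: it is exactly what makes the cited result of \cite{BC11} (or any form of the projection theorem) applicable, since $\mathcal{C}$ is only assumed closed in the weaker topology of $X$, and the paper leaves this point implicit in its citation. So your route is more elementary and fills in a detail the paper glosses over, at the cost of a few extra lines, while the paper's version is shorter but hides where the closedness hypothesis on $\mathcal{C}$ and the continuity of the embedding actually enter.
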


\begin{proof}
By \cite[Proposition 11.14]{BC11} there exists $h\in H\cap (x-\mathcal{C})$ such that
\[\abs{h}_H=d_H(x,\mathcal{C}).\]
Assume $h_1,h_2\in H\cap (x-\mathcal{C})$ are such that $\abs{h_1}_H=\abs{h_2}_H=d_H(x,\mathcal{C})$. Observe that
\[\frac{h_1+h_2}{2}\in H\cap(x-\mathcal{C})\]
and $\abs{h_1}_H\leq |2^{-1}(h_1+h_2)|_H\leq 2^{-1}\abs{h_1}+2^{-1}\abs{h_2}_H=\abs{h_1}_H$. In particular $\abs{h_1}_H=\abs{h_2}_H=|2^{-1}(h_1+h_2)|_H$. So, by the strict convexity of $\abs{\cdot}_H$, we have $h_1=h_2$.
\end{proof}

\begin{pro}\label{caratterizzazione minimo}
Let $\mathcal{C}\subseteq X$ be a closed convex set and $x\in D(\mathcal{C})$. For $m\in H\cap(x-\mathcal{C})$, we have $m=m(x,\mathcal{C})$ if, and only if,
\begin{gather}\label{equazione minimo}
\gen{h-m,m}_H\geq 0,
\end{gather}
for every $h\in H\cap (x-\mathcal{C})$.
\end{pro}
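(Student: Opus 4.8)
The plan is to recognize equation \eqref{equazione minimo} as the standard variational characterization of the metric projection of the origin onto a convex set in the Hilbert space $H$. The key structural observation is that the set $K:=H\cap(x-\mathcal{C})$ is a \emph{convex} subset of $H$: since $\mathcal{C}$ is convex, so is $x-\mathcal{C}$, and intersecting a convex set with the subspace $H$ preserves convexity. By Proposition \ref{esistenza punto di minimo distanza} the point $m(x,\mathcal{C})$ is precisely the (unique) element of $K$ of minimal $H$-norm, i.e.\ the $H$-nearest point of $K$ to $0\in H$. Thus the claim is exactly the assertion that $m$ minimizes $\abs{\cdot}_H$ over $K$ if and only if $\gen{h-m,m}_H\geq 0$ for all $h\in K$, which is the classical projection inequality; the whole argument will take place inside $H$ and only uses the Hilbert structure of $H$ together with the convexity of $K$.

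For the necessity direction I would assume $m=m(x,\mathcal{C})$ and fix an arbitrary $h\in K$. By convexity of $K$, for every $t\in[0,1]$ the point $(1-t)m+th=m+t(h-m)$ again lies in $K$, so minimality of $\abs{m}_H$ yields $\abs{m+t(h-m)}_H^2\geq\abs{m}_H^2$. Expanding the left-hand side gives $\abs{m}_H^2+2t\gen{h-m,m}_H+t^2\abs{h-m}_H^2\geq\abs{m}_H^2$, hence $2\gen{h-m,m}_H+t\abs{h-m}_H^2\geq 0$ after dividing by $t>0$. Letting $t\to 0^+$ produces $\gen{h-m,m}_H\geq 0$, which is \eqref{equazione minimo}.

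For the sufficiency direction I would assume \eqref{equazione minimo} holds for every $h\in K$ and estimate, for an arbitrary $h\in K$, the quantity $\abs{h}_H^2=\abs{(h-m)+m}_H^2=\abs{h-m}_H^2+2\gen{h-m,m}_H+\abs{m}_H^2\geq\abs{m}_H^2$, using $\abs{h-m}_H^2\geq 0$ and the hypothesis $\gen{h-m,m}_H\geq 0$. This shows $\abs{m}_H\leq\abs{h}_H$ for all $h\in K$, so $m$ realizes the minimal norm, i.e.\ $\abs{m}_H=d_H(x,\mathcal{C})$; the uniqueness part of Proposition \ref{esistenza punto di minimo distanza} then forces $m=m(x,\mathcal{C})$. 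I do not expect any serious obstacle here: the argument is entirely finite-dimensional in spirit and purely Hilbertian, so the only point requiring a word of justification is the convexity of $K=H\cap(x-\mathcal{C})$, after which both implications follow from the two-line expansions above.
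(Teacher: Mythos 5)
Your proof is correct and is essentially the same argument as the paper's: both directions are the classical Hilbert-space projection characterization applied to the convex set $K=H\cap(x-\mathcal{C})$, using Proposition \ref{esistenza punto di minimo distanza} for existence and uniqueness. The only differences are cosmetic: for necessity you parametrize the segment from $m$ toward $h$ and let $t\to 0^+$ where the paper parametrizes from $h$ toward $m$ and lets $t\to 1^-$, and for sufficiency you expand $\abs{h}_H^2\geq\abs{m}_H^2$ directly where the paper uses Cauchy--Schwarz on $\abs{m}_H^2\leq\gen{h,m}_H$.
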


\begin{proof}
Let $v_t=(1-t)h+tm(x,\mathcal{C})$ for $t\in(0,1)$ and $h\in H\cap (x-\mathcal{C})$. Since $v_t\in H\cap(x-\mathcal{C})$ we have $\abs{m(x,\mathcal{C})}_H\leq \abs{v_t}_H$. So
\[\abs{m(x,\mathcal{C})}_H^2\leq\abs{v_t}_H^2=(1-t)^2\abs{h}_H^2+2t(1-t)\gen{h,m(x,\mathcal{C})}_H+t^2\abs{m(x,\mathcal{C})}_H^2.\]
Dividing by $1-t$ we get
\[(1+t)\abs{m(x,\mathcal{C})}_H^2\leq (1-t)\abs{h}_H^2+2t\gen{h,m(x,\mathcal{C})}_H.\]
Letting $t\ra 1^-$ we obtain
\[\gen{h-m(x,\mathcal{C}),m(x,\mathcal{C})}_H\geq 0.\]
Now let $m\in H\cap(x-\mathcal{C})$ be an element satisfying inequality \eqref{equazione minimo}. For every $h\in H\cap(x-\mathcal{C})$ we have
\begin{gather*}
\abs{m}_H^2=\gen{m-h,m}_H+\gen{h,m}_H\leq \gen{h,m}_H \leq \abs{h}_H\abs{m}_H.
\end{gather*}
Thus we get $\abs{m}_H\leq\abs{h}_H$ for every $h\in H\cap(x-\mathcal{C})$.
\end{proof}

\begin{pro}\label{punto minimo lip}
Let $\mathcal{C}\subseteq X$ be a closed convex set and $x\in D(\mathcal{C})$. The map $m_{x,\mathcal{C}}:H\ra H$ defined as $m_{x,\mathcal{C}}(h):=m(x+h,\mathcal{C})$ is well defined for every $h\in H$ and Lipschitz continuous, with Lipschitz constant less or equal than $1$.
\end{pro}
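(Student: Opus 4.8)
The plan is to regard $m(x,\mathcal{C})$ as the metric projection of the origin onto the closed convex subset $H\cap(x-\mathcal{C})$ of the Hilbert space $H$, and to transfer the classical non-expansiveness of projections onto convex sets to the present setting through the variational characterization of Proposition \ref{caratterizzazione minimo}. The two ingredients I would isolate are (i) well-definedness, which rests on the $H$-translation invariance of $D(\mathcal{C})$, and (ii) the Lipschitz bound, which follows from a standard two-obstacle computation once the correct admissible test vectors are identified.

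For well-definedness, since $x\in D(\mathcal{C})$ and, by Lemma \ref{lemma translation invaritant}, the set $D(\mathcal{C})$ satisfies $D(\mathcal{C})+H=D(\mathcal{C})$, we have $x+h\in D(\mathcal{C})$ for every $h\in H$; equivalently $H\cap(x+h-\mathcal{C})\neq\emptyset$. Proposition \ref{esistenza punto di minimo distanza} then guarantees that $m(x+h,\mathcal{C})$ exists and is unique, so $m_{x,\mathcal{C}}$ is indeed defined on all of $H$.

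For the Lipschitz estimate, fix $h_1,h_2\in H$ and write $m_i:=m(x+h_i,\mathcal{C})$ for $i=1,2$. The crucial observation is that the two feasible sets are translates of one another: if $m_2\in H\cap(x+h_2-\mathcal{C})$, say $m_2=x+h_2-c$ with $c\in\mathcal{C}$, then $m_2+(h_1-h_2)=x+h_1-c$ lies both in $H$ (because $m_2,h_1,h_2\in H$) and in $x+h_1-\mathcal{C}$, hence $m_2+(h_1-h_2)\in H\cap(x+h_1-\mathcal{C})$; symmetrically $m_1+(h_2-h_1)\in H\cap(x+h_2-\mathcal{C})$. Using these as test vectors in the inequality of Proposition \ref{caratterizzazione minimo} for $m_1$ and for $m_2$ respectively yields $\gen{(m_2-m_1)+(h_1-h_2),m_1}_H\ge 0$ and $\gen{(m_1-m_2)+(h_2-h_1),m_2}_H\ge 0$. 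Writing $a=m_1-m_2$ and $b=h_1-h_2$ and adding the two inequalities gives $\gen{b-a,a}_H\ge 0$, that is $\abs{a}_H^2\le\gen{b,a}_H\le\abs{b}_H\abs{a}_H$ by Cauchy--Schwarz, so that $\abs{m_1-m_2}_H\le\abs{h_1-h_2}_H$, which is the asserted bound with Lipschitz constant $1$.

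I expect the only genuinely delicate point to be the verification that the shifted minimizers $m_2+(h_1-h_2)$ and $m_1+(h_2-h_1)$ land back in the correct feasible sets: this is precisely where both the linearity of $H$ and the fact that the translation acts on the base point and not on $\mathcal{C}$ are used, and it is what makes the convexity and closedness hypotheses on $\mathcal{C}$ indispensable. Once this is in place the remainder is the routine projection computation above, and the degenerate case $h_1=h_2$ (where $a=0$) is trivially covered, so no division issue arises when passing from $\abs{a}_H^2\le\abs{b}_H\abs{a}_H$ to $\abs{a}_H\le\abs{b}_H$.
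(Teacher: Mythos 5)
Your proposal is correct and follows essentially the same route as the paper: well-definedness via Lemma \ref{lemma translation invaritant} and Proposition \ref{esistenza punto di minimo distanza}, then the Lipschitz bound by feeding the shifted minimizers as test vectors into the variational characterization of Proposition \ref{caratterizzazione minimo}, summing the two inequalities, and applying Cauchy--Schwarz. The paper phrases the estimate for the pair $x$, $x+h$ rather than general $h_1,h_2$, but since $x\in D(\mathcal{C})$ is arbitrary this is the same argument up to relabeling.
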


\begin{proof}
If $x\in D(\mathcal{C})$, then by Lemma \ref{lemma translation invaritant} $x+h\in D(\mathcal{C})$ for every $h\in H$. So by Proposition \ref{esistenza punto di minimo distanza} the element $m(x+h,\mathcal{C}) \in H\cap(x+h-\mathcal{C})$ such that $d_H(x+h,\mathcal{C})=\abs{m(x+h,\mathcal{C})}_H$ exists and it is unique. Thus the map $m_{x,\mathcal{C}}(h):=m(x+h,\mathcal{C})$ is well defined for every $h\in H$.

By the very definition of the function $m(\cdot,\mathcal{C})$ (see Proposition \ref{esistenza punto di minimo distanza}) we get that for every $x\in X$ and $h\in H$, $m(x,\mathcal{C})+h\in H\cap(x+h-\mathcal{C})$ and $m(x+h,\mathcal{C})-h\in H\cap(x-\mathcal{C})$. By Proposition \ref{caratterizzazione minimo} we get
\begin{gather}
\label{dis1} 0\leq\gen{k-m(x,\mathcal{C}),m(x,\mathcal{C})}_H\qquad\text{ for every }k\in H\cap(x-\mathcal{C});\\
\label{dis2} 0\leq\gen{l-m(x+h,\mathcal{C}),m(x+h,\mathcal{C})}_H\qquad\text{ for every }l\in H\cap(x+h-\mathcal{C}).
\end{gather}
Set $k=m(x+h,\mathcal{C})-h$ and $l=m(x,\mathcal{C})+h$ and sum inequalities \eqref{dis1} and \eqref{dis2}
\begin{gather}\label{dis che serve una volta sola}
\begin{array}{c}
\displaystyle 0\leq\gen{m(x+h,\mathcal{C})-h-m(x,\mathcal{C}),m(x,\mathcal{C})}_H+\gen{m(x,\mathcal{C})+h-m(x+h,\mathcal{C}),m(x+h,\mathcal{C})}_H=\\
\displaystyle= -\abs{m(x+h,\mathcal{C})-m(x,\mathcal{C})}_H^2+\gen{h,m(x+h,\mathcal{C})-m(x,\mathcal{C})}_H.
\end{array}
\end{gather}
By the Cauchy--Schwarz inequality, we get $\abs{m(x+h,\mathcal{C})-m(x,\mathcal{C})}_H\leq\abs{h}_H$.
\end{proof}

We remark that by inequality \eqref{dis che serve una volta sola} we get
\begin{gather}\label{dis che serve una sola volta 2}
\gen{m(x+h,\mathcal{C})-m(x,\mathcal{C}),h}_H\geq 0
\end{gather}
for every $x\in D(\mathcal{C})$ and $h\in H$. This fact will come in handy in the next proposition.

\begin{pro}\label{differenziabilita distanza}
Let $\mathcal{C}\subseteq X$ be a closed convex set. The function $d^2_H(\cdot,\mathcal{C})$ is differentiable along $H$ at every point $x\in D(\mathcal{C})$. Furthermore
\[(\nabla_H d_H^2(\cdot,\mathcal{C}))(x)=2m(x,\mathcal{C}).\]
\end{pro}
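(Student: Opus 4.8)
The plan is to reduce everything to the two variational inequalities coming from Proposition \ref{caratterizzazione minimo} together with the $1$-Lipschitz estimate of Proposition \ref{punto minimo lip}; this is the classical Hilbert-space projection argument, adapted to the fact that $d_H$ is finite only along $H$-translates. Fix $x\in D(\mathcal{C})$ and abbreviate $m_0:=m(x,\mathcal{C})$. For $h\in H$ with $\abs{h}_H=1$ and $t\in\R$ small, set $m_t:=m(x+th,\mathcal{C})$, which is well defined because $x+th\in D(\mathcal{C})$ by Lemma \ref{lemma translation invaritant} and Proposition \ref{esistenza punto di minimo distanza}. Since $d_H^2(x,\mathcal{C})=\abs{m_0}_H^2$ and $d_H^2(x+th,\mathcal{C})=\abs{m_t}_H^2$, the quantity to control is the difference quotient $t^{-1}(\abs{m_t}_H^2-\abs{m_0}_H^2)$.

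First I would record the elementary membership facts, valid for every $t$, that $m_t-th\in H\cap(x-\mathcal{C})$ and $m_0+th\in H\cap(x+th-\mathcal{C})$. Testing the characterization of $m_0$ with $k=m_t-th$ and the characterization of $m_t$ with $l=m_0+th$ yields, after expanding,
\[\gen{m_t-m_0,m_0}_H\geq t\gen{h,m_0}_H\qquad\text{and}\qquad \gen{m_t-m_0,m_t}_H\leq t\gen{h,m_t}_H.\]
Writing $\abs{m_t}_H^2-\abs{m_0}_H^2$ once as $2\gen{m_t-m_0,m_0}_H+\abs{m_t-m_0}_H^2$ and once as $2\gen{m_t-m_0,m_t}_H-\abs{m_t-m_0}_H^2$, and discarding the nonnegative square in the favourable direction in each case, I obtain the two-sided bound
\[2t\gen{m_0,h}_H\leq \abs{m_t}_H^2-\abs{m_0}_H^2\leq 2t\gen{m_t,h}_H,\]
which holds for every real $t$.

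Next I would divide by $t$, reversing the inequalities when $t<0$, so that in either case the difference quotient is squeezed between $2\gen{m_0,h}_H$ and $2\gen{m_t,h}_H$. Hence
\[\abs{\frac{d_H^2(x+th,\mathcal{C})-d_H^2(x,\mathcal{C})}{t}-\gen{2m_0,h}_H}\leq 2\abs{\gen{m_t-m_0,h}_H}\leq 2\abs{m_t-m_0}_H\leq 2\abs{t},\]
where the final step invokes the $1$-Lipschitz estimate $\abs{m_t-m_0}_H=\abs{m(x+th,\mathcal{C})-m(x,\mathcal{C})}_H\leq\abs{th}_H=\abs{t}$ from Proposition \ref{punto minimo lip}. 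The right-hand side tends to $0$ independently of $h$ on the unit sphere of $H$, which is precisely differentiability along $H$ with $\nabla_H d_H^2(\cdot,\mathcal{C})(x)=2m_0=2m(x,\mathcal{C})$.

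The only real subtlety, rather than a genuine obstacle, is that $d_H(\cdot,\mathcal{C})$ equals $+\infty$ off the $H$-translates of $\mathcal{C}$, so the base point must be kept inside $D(\mathcal{C})$ throughout; this is exactly what the $H$-translation invariance of $D(\mathcal{C})$ in Lemma \ref{lemma translation invaritant} guarantees. The pleasant feature of this route is that the uniformity over $\set{h\in H\tc\abs{h}_H=1}$ demanded by the definition of $H$-differentiability is furnished for free by the $1$-Lipschitz bound, so no separate uniformity argument is needed.
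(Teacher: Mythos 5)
Your proof is correct and takes essentially the same route as the paper: both arguments rest on the same membership facts, namely that $m(x+th,\mathcal{C})-th\in H\cap(x-\mathcal{C})$ and $m(x,\mathcal{C})+th\in H\cap(x+th-\mathcal{C})$, and both conclude by squeezing the increment of $d_H^2(\cdot,\mathcal{C})$ between two bounds whose gap is quadratic in the step, which gives the required uniformity over the unit sphere of $H$ for free. The only difference is bookkeeping: the paper extracts its two bounds from the norm-minimality inequalities together with the monotonicity relation $\gen{m(x+h,\mathcal{C})-m(x,\mathcal{C}),h}_H\geq 0$, whereas you run both bounds through the variational characterization of Proposition \ref{caratterizzazione minimo} and close the squeeze with the $1$-Lipschitz estimate of Proposition \ref{punto minimo lip}; since these tools are interderivable, the mathematical content is the same.
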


\begin{proof}
By the very definition of the function $m(\cdot,\mathcal{C})$ (see Proposition \ref{esistenza punto di minimo distanza}) we get that for every $x\in X$ and $h\in H$, $m(x,\mathcal{C})+h\in H\cap(x+h-\mathcal{C})$ and $m(x+h,\mathcal{C})-h\in H\cap(x-\mathcal{C})$. So by definition in formula \eqref{definizione H distanza} and Proposition \ref{esistenza punto di minimo distanza} we have
\begin{gather}\label{disuguaglianze in diff distanza}
\abs{m(x+h,\mathcal{C})}_H\leq\abs{m(x,\mathcal{C})+h}_H\quad\text{ and }\quad\abs{m(x,\mathcal{C})}_H\leq\abs{m(x+h,\mathcal{C})-h}_H.
\end{gather}
Now using the left hand side inequality in \eqref{disuguaglianze in diff distanza} we get
\begin{gather}\label{diff dist 1}
\begin{array}{c}
\displaystyle d^2_H(x+h,\mathcal{C})-d_H^2(x,\mathcal{C})-2\gen{m(x,\mathcal{C}),h}_H=\\
\displaystyle=\abs{m(x+h,\mathcal{C})}_H^2-\abs{m(x,\mathcal{C})}_H^2-2\gen{m(x,\mathcal{C}),h}_H\leq\\
\displaystyle\leq \abs{m(x,\mathcal{C})+h}_H^2-\abs{m(x,\mathcal{C})}_H^2-2\gen{m(x,\mathcal{C}),h}_H=\\
\displaystyle= \abs{m(x,\mathcal{C})}_H^2+\abs{h}_H^2+2\gen{m(x,\mathcal{C}),h}_H-\abs{m(x,\mathcal{C})}_H^2-2\gen{m(x,\mathcal{C}),h}_H=\abs{h}_H^2
\end{array}
\end{gather}
In addition using the right hand side inequality in  \eqref{disuguaglianze in diff distanza} and inequality \eqref{dis che serve una sola volta 2} we get
\begin{gather}\label{diff dist 2}
\begin{array}{c}
\displaystyle d^2_H(x+h,\mathcal{C})-d_H^2(x,\mathcal{C})-2\gen{m(x,\mathcal{C}),h}_H=\\
\displaystyle=\abs{m(x+h,\mathcal{C})}_H^2-\abs{m(x,\mathcal{C})}_H^2-2\gen{m(x,\mathcal{C}),h}_H\geq\\
\displaystyle\geq\abs{m(x+h,\mathcal{C})}_H^2-\abs{m(x+h,\mathcal{C})-h}_H^2-2\gen{m(x,\mathcal{C}),h}_H=\\
\displaystyle= \abs{m(x+h,\mathcal{C})}_H^2-\abs{m(x+h,\mathcal{C})}_H^2-\abs{h}_H^2+2\gen{m(x+h,\mathcal{C})-m(x,\mathcal{C}),h}_H\geq -\abs{h}_H^2
\end{array}
\end{gather}
Combining inequalities \eqref{diff dist 1} and \eqref{diff dist 2} we get
\[-\abs{h}_H\leq \frac{d^2_H(x+h,\mathcal{C})-d_H^2(x,\mathcal{C})-2\gen{m(x,\mathcal{C}),h}_H}{\abs{h}_H}\leq\abs{h}_H.\]
Letting $\abs{h}_H\ra 0$ we get the assertions of our proposition.
\end{proof}

\begin{pro}\label{convessita e continuita distanza}
Let $\mathcal{C}\subseteq X$ be a closed convex set such that $\mu(D(\mathcal{C}))=1$. The function $d_H^2(\cdot,\mathcal{C})$ is convex, $H$-continuous and belongs to $W^{1,p}(X,\mu)$ for every $p>1$.
\end{pro}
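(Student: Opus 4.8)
The plan is to establish the three assertions separately, in increasing order of difficulty, exploiting the results already proved in this section together with Theorem \ref{teorema 5.11.2}.

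\textbf{Convexity.} First I would show that $d_H(\cdot,\mathcal{C})$ is itself convex as a $[0,+\infty]$-valued function, and then pass to its square. Given $x_0,x_1\in D(\mathcal{C})$, choose $h_i\in H\cap(x_i-\mathcal{C})$, that is $x_i-h_i\in\mathcal{C}$. For $t\in[0,1]$ set $x_t=(1-t)x_0+tx_1$ and $h_t=(1-t)h_0+th_1\in H$. Convexity of $\mathcal{C}$ gives $x_t-h_t=(1-t)(x_0-h_0)+t(x_1-h_1)\in\mathcal{C}$, so $h_t\in H\cap(x_t-\mathcal{C})$ and
\[d_H(x_t,\mathcal{C})\leq\abs{h_t}_H\leq(1-t)\abs{h_0}_H+t\abs{h_1}_H.\]
Taking the infimum over admissible $h_0,h_1$ yields convexity of $d_H(\cdot,\mathcal{C})$. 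Since $d_H(\cdot,\mathcal{C})\geq 0$ and $\xi\mapsto\xi^2$ is convex and nondecreasing on $[0,+\infty)$, the composition $d_H^2(\cdot,\mathcal{C})$ is convex.

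\textbf{$H$-Lipschitz estimate and $H$-continuity.} Next I would record that $d_H(\cdot,\mathcal{C})$ is $H$-Lipschitz with constant $1$. Indeed, by Proposition \ref{punto minimo lip} the map $h\mapsto m(x+h,\mathcal{C})$ is $1$-Lipschitz, and since $d_H(x,\mathcal{C})=\abs{m(x,\mathcal{C})}_H$, the reverse triangle inequality gives, for $x\in D(\mathcal{C})$ and $h\in H$,
\[\abs{d_H(x+h,\mathcal{C})-d_H(x,\mathcal{C})}\leq\abs{m(x+h,\mathcal{C})-m(x,\mathcal{C})}_H\leq\abs{h}_H.\]
By Theorem \ref{teorema 5.11.2} this already shows $d_H(\cdot,\mathcal{C})\in W^{1,p}(X,\mu)$ for every $p>1$; in particular $d_H(\cdot,\mathcal{C})\in\elle^q(X,\mu)$ for every $q>1$, whence $d_H^2(\cdot,\mathcal{C})\in\elle^p(X,\mu)$ for every $p>1$. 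The $H$-continuity of $d_H^2(\cdot,\mathcal{C})$ then follows from the factorisation $\abs{d_H^2(x+h,\mathcal{C})-d_H^2(x,\mathcal{C})}\leq\abs{h}_H\pa{2d_H(x,\mathcal{C})+\abs{h}_H}$, which tends to $0$ as $\abs{h}_H\to 0$ at every $x\in D(\mathcal{C})$, a set of full $\mu$-measure by Lemma \ref{lemma translation invaritant}.

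\textbf{Sobolev regularity of the square.} The delicate point is membership of $d_H^2(\cdot,\mathcal{C})$ in $W^{1,p}(X,\mu)$: the function grows quadratically, hence is \emph{not} globally $H$-Lipschitz, so Theorem \ref{teorema 5.11.2} cannot be applied to it directly. I would circumvent this by truncation. Fix a convex $C^1$ function $\Phi_n:\R\to\R$ with $\Phi_n(\xi)=\xi^2$ for $0\leq\xi\leq n$ and $\Phi_n'(\xi)=2\min(\xi,n)$, so that $\Phi_n$ is $2n$-Lipschitz and $0\leq\Phi_n\leq\xi^2$. Then $g_n:=\Phi_n\circ d_H(\cdot,\mathcal{C})$ is measurable and $H$-Lipschitz with constant $2n$, hence $g_n\in W^{1,p}(X,\mu)$ for every $p>1$ by Theorem \ref{teorema 5.11.2}, and by the chain rule for differentiation along $H$ one has $\nabla_H g_n=2\min(d_H(\cdot,\mathcal{C}),n)\,\nabla_H d_H(\cdot,\mathcal{C})$. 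Using Proposition \ref{differenziabilita distanza}, which identifies $\nabla_H d_H^2(\cdot,\mathcal{C})=2m(\cdot,\mathcal{C})$ on $D(\mathcal{C})$ and so gives $\nabla_H d_H(\cdot,\mathcal{C})=m(\cdot,\mathcal{C})/\abs{m(\cdot,\mathcal{C})}_H$ where $d_H>0$, I would check that $g_n\to d_H^2(\cdot,\mathcal{C})$ in $\elle^p(X,\mu)$ and $\nabla_H g_n\to 2m(\cdot,\mathcal{C})$ in $\elle^p(X,\mu;H)$ by dominated convergence, the dominating functions $d_H^2(\cdot,\mathcal{C})$ and $2d_H(\cdot,\mathcal{C})$ lying in $\elle^p$ by the previous step. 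Completeness of $W^{1,p}(X,\mu)$ then forces $d_H^2(\cdot,\mathcal{C})\in W^{1,p}(X,\mu)$ with $\nabla_H d_H^2(\cdot,\mathcal{C})=2m(\cdot,\mathcal{C})$, for every $p>1$. The main obstacle, and the only place where real care is needed, is precisely this last passage: constructing a single approximating family that is simultaneously $H$-Lipschitz (so that Theorem \ref{teorema 5.11.2} applies) and whose gradients converge in every $\elle^p$, which is exactly what the truncations $\Phi_n$ and the pointwise gradient formula of Proposition \ref{differenziabilita distanza} supply.
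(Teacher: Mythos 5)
Your proof is correct, and structurally it follows the same route as the paper: convexity, then $H$-continuity from the $1$-Lipschitz bound along $H$ furnished by Proposition \ref{punto minimo lip}, then Sobolev regularity using Theorem \ref{teorema 5.11.2} and the identity $\nabla_H d_H^2(\cdot,\mathcal{C})=2m(\cdot,\mathcal{C})$ of Proposition \ref{differenziabilita distanza}. Two points differ. For convexity the paper argues directly on $d_H^2(\cdot,\mathcal{C})$: it picks $\eps$-near-minimizers $h_\eps(x)\in H\cap(x-\mathcal{C})$ and $h_\eps(y)\in H\cap(y-\mathcal{C})$ and uses convexity of the squared Hilbert norm, whereas you first prove that $d_H(\cdot,\mathcal{C})$ is convex and then compose with the nondecreasing convex map $\xi\mapsto\xi^2$; both are equally elementary. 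The substantive divergence is the $W^{1,p}$ step, which the paper settles in one sentence: $d_H^2(\cdot,\mathcal{C})$ is the composition of a smooth function and an $H$-Lipschitz function and has $H$-Lipschitz gradient, hence it belongs to $W^{1,p}(X,\mu)$. As you observe, this cannot be a literal application of Theorem \ref{teorema 5.11.2}, since $d_H^2(\cdot,\mathcal{C})$ is not globally $H$-Lipschitz; the paper is tacitly relying on an approximation or closure argument that it does not spell out. Your truncation scheme $\Phi_n\circ d_H(\cdot,\mathcal{C})$, with dominated convergence of the truncated functions and of their pointwise $H$-gradients (dominated by $d_H^2(\cdot,\mathcal{C})$ and $2d_H(\cdot,\mathcal{C})$, which lie in every $\elle^p$ by the Lipschitz step), followed by closedness of the Sobolev gradient, is precisely such an argument written out in full, so your version is more complete exactly where the paper is most terse. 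The only point worth flagging is that in identifying the Sobolev gradient of $g_n$ with the chain-rule pointwise gradient $2\min(d_H(\cdot,\mathcal{C}),n)\nabla_H d_H(\cdot,\mathcal{C})$ (interpreted as $0$ on $\mathcal{C}$, where $m(\cdot,\mathcal{C})=0$) you are using the standard fact that for measurable $H$-Lipschitz functions the Sobolev gradient agrees $\mu$-a.e.\ with the pointwise gradient along $H$; this is legitimate, and the paper makes the same identification elsewhere (e.g.\ in the proof of Proposition \ref{check proprieta V alpha}), but it deserves a word.
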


\begin{proof}
Convexity follows from a standard argument: let $\eps>0$, $x,y\in D(\mathcal{C})$ and choose $h_\eps(x)\in H\cap (x-\mathcal{C})$ and $h_\eps(y)\in H\cap (y-\mathcal{C})$ such that
\[\abs{h_\eps(x)}_H^2\leq d_H^2(x,\mathcal{C})+\eps\qquad \text{ and }\qquad\abs{h_\eps(y)}_H^2\leq d_H^2(y,\mathcal{C})+\eps.\]
Observe that $\lambda h_\eps(x)+(1-\lambda)h_\eps(y)\in H\cap(\lambda x+(1-\lambda)y-\mathcal{C})$, then
\begin{gather*}
d_H^2(\lambda x+(1-\lambda)y,\mathcal{C})\leq \abs{\lambda h_\eps(x)+(1-\lambda)h_\eps(y)}_H^2\leq \lambda\abs{h_\eps(x)}^2_H+(1-\lambda)\abs{h_\eps(y)}_H^2\leq\\
\leq \lambda d_H^2(x,\mathcal{C})+(1-\lambda)d_H^2(y,\mathcal{C})+\eps.
\end{gather*}
Letting $\eps\ra 0$ we get the convexity of $d_H^2(\cdot,\mathcal{C})$. If neither $x\in D(\mathcal{C})$ nor $y\in D(\mathcal{C})$, then for every $\lambda\in [0,1]$ we have
\[d_H^2(\lambda x+(1-\lambda)y,\mathcal{C})\leq \lambda d_H^2(x,\mathcal{C})+(1-\lambda)d_H^2(y,\mathcal{C})=+\infty.\]

$H$-continuity follows from Proposition \ref{esistenza punto di minimo distanza} and Proposition \ref{punto minimo lip}, since for every $x\in D(\mathcal{C})$ and $k\in H$
\begin{gather*}
\abs{d_H(x+k,\mathcal{C})-d_H(x,\mathcal{C})}=\abs{\abs{m(x+k,\mathcal{C})}_H-\abs{m(x,\mathcal{C})}_H}\leq\\
\leq\abs{m(x+k,\mathcal{C})-m(x,\mathcal{C})}_H\leq\abs{k}_H.
\end{gather*}
So $d_H^2(\cdot,\mathcal{C})$ is the composition of a $H$-Lipschitz function and a continuous function, then it is $H$-continuous.

The functions $d_H(\cdot,\mathcal{C})$ and $m(\cdot,\mathcal{C})$ are $H$-Lipschitz. Since $d_H^2(\cdot,\mathcal{C})$ is the composition of a smooth function and a $H$-Lipschitz function and it has $H$-Lipschitz gradient, we get $d_H^2(\cdot,\mathcal{C})\in W^{1,p}(X,\mu)$ for every $p>1$.
\end{proof}

\begin{pro}\label{caratterizzation distanza zero}
Let $\mathcal{C}\subseteq X$ be a closed convex set. Then $d_H(x,\mathcal{C})=0$ if, and only if, $x\in \mathcal{C}$.
\end{pro}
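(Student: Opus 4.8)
The plan is to prove the two implications separately; both are short once we invoke the results already established in this section, and the statement essentially confirms that the zero set of $d_H(\cdot,\mathcal{C})$ is exactly $\mathcal{C}$.

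For the implication $x\in\mathcal{C}\Rightarrow d_H(x,\mathcal{C})=0$, I would simply observe that $0=x-x$ lies in $x-\mathcal{C}$ and of course belongs to $H$, so $0\in H\cap(x-\mathcal{C})$. Hence the infimum defining $d_H(x,\mathcal{C})$ in \eqref{definizione H distanza} is bounded above by $\abs{0}_H=0$, which forces $d_H(x,\mathcal{C})=0$ since $d_H(\cdot,\mathcal{C})\geq 0$.

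For the converse, suppose $d_H(x,\mathcal{C})=0$. In particular $d_H(x,\mathcal{C})<+\infty$, so by \eqref{definizione H distanza} we have $H\cap(x-\mathcal{C})\neq\emptyset$, that is $x\in D(\mathcal{C})$. Since $\mathcal{C}$ is closed and convex, Proposition \ref{esistenza punto di minimo distanza} applies and provides a (unique) point $m(x,\mathcal{C})\in H\cap(x-\mathcal{C})$ with $\abs{m(x,\mathcal{C})}_H=d_H(x,\mathcal{C})=0$. Therefore $m(x,\mathcal{C})=0$, and since $m(x,\mathcal{C})\in x-\mathcal{C}$ we conclude $0\in x-\mathcal{C}$, i.e. $x\in\mathcal{C}$.

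The only point requiring care, and the reason the closedness of $\mathcal{C}$ is essential, lies in this converse direction: without closedness the infimum need not be attained, and one could only produce a minimizing sequence $h_n\in H\cap(x-\mathcal{C})$ with $\abs{h_n}_H\to 0$. Since the inclusion $H\hookrightarrow X$ is continuous, such a sequence satisfies $h_n\to 0$ in $X$, whence $x-h_n\to x$ in $X$, and the closedness of $\mathcal{C}$ then yields $x\in\mathcal{C}$. Either route works; invoking Proposition \ref{esistenza punto di minimo distanza} is the most economical, and I expect no genuine obstacle in this argument.
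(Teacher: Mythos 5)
Your proposal is correct, and your preferred route differs from the paper's. The paper proves the converse by the second argument you sketch as an alternative: it takes a minimizing sequence $h_n\in H\cap(x-\mathcal{C})$ with $\abs{h_n}_H\ra 0$, writes $h_n=x-c_n$ with $c_n\in\mathcal{C}$, uses the continuity of the embedding $H\hookrightarrow X$ (i.e. $\norm{h}_X\leq K\abs{h}_H$) to get $c_n\ra x$ in $X$, and concludes by closedness of $\mathcal{C}$. Your primary route instead invokes Proposition \ref{esistenza punto di minimo distanza} to produce the minimizer $m(x,\mathcal{C})$ with $\abs{m(x,\mathcal{C})}_H=0$, which is indeed more economical given what has already been established in the section, but it silently consumes the convexity of $\mathcal{C}$ (existence and uniqueness of the projection rest on it). The paper's sequence argument, by contrast, uses only closedness, so it actually proves the proposition for an arbitrary closed set $\mathcal{C}$; this is a genuine gain in generality, and it is also why the paper's statement of which hypotheses matter where is slightly sharper than your closing remark suggests: closedness is what is essential, while convexity plays no role at all in this particular proposition under the paper's proof.
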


\begin{proof}
We remark that if $x\in \mathcal{C}$, then $0\in H\cap (x-\mathcal{C})$ and $d_H(x,\mathcal{C})=0$.

We recall that $H$ is continuously embedded in $X$ (see \cite[Theorem 2.4.5]{Bog98}), so there exists $K>0$ such that $\norm{h}_X\leq K\abs{h}_H$, for every $h\in H$. Let $x\in X$ such that $d_H(x,\mathcal{C})=0$. By definition we have $x\in D(\mathcal{C})$. Then a sequence $\set{h_n}_{n\in\N}\subseteq H\cap(x-\mathcal{C})$ exists such that $\abs{h_n}_H\ra 0$. For every $n\in\N$ there exists $c_n\in \mathcal{C}$ such that $h_n=x-c_n$, so
\[\lim_{n\ra+\infty}\norm{x-c_n}_X=\lim_{n\ra+\infty}\norm{h_n}_X\leq K\lim_{n\ra+\infty}\abs{h_n}_H= 0.\]
Thus, by the closure of $\mathcal{C}$, $x\in \mathcal{C}$.
\end{proof}

\section{A property of the Moreau--Yosida approximations along $H$}\label{Some properties of Moreau--Yoshida approximations along $H$}

We start this section by recalling the definition and some basic properties of the subdifferential of a convex continuous function. If $f:X\ra\R$ is a proper, convex and lower semicontinuous function, we denote by $\dom(f)$ the domain of $f$, namely $\dom(f):=\set{x\in X\tc f(x)<+\infty}$,
and by $\partial f(x)$ the subdifferential of $f$ at the point $x$, i.e.
\begin{gather}\label{subdifferenziale}
\partial f(x):=\left\{\begin{array}{lr}
\set{x^*\in X^*\tc f(y)\geq f(x)+x^*(y-x)\text{ for every }y\in X} & x\in\dom(f);\\
\emptyset & x\notin\dom(f).
\end{array}\right.
\end{gather}
We recall that $\partial f(x)$ is convex and weak-star compact for every $x\in X$, but it may be empty even if $x\in\dom(f)$. Furthermore $\partial f$ is a monotone operator, namely for every $x,y\in X$, $x^*\in\partial f(x)$ and $y^*\in \partial f(y)$ the following inequality holds
\begin{gather}\label{monotonicita subdifferenziale}
y^*(y-x)\geq x^*(y-x).
\end{gather}
For a classical treatment of monotone operators and subdifferential of convex functions we refer to \cite{Phe93} and \cite{BP12}.

We recall that for $\alpha>0$ the \emph{Moreau--Yosida approximation along $H$} of a proper convex and lower semicontinuous function $f:X\ra\R\cup\set{+\infty}$ is
\begin{gather}\label{M env}
f_\alpha(x):=\inf\set{f(x+h)+\frac{1}{2\alpha}\abs{h}^2_H\tc h\in H}.
\end{gather}
See \cite[Section 3]{CF16} for more details and \cite{Bre73} and \cite[Section 12.4]{BC11} for a treatment of the classical Moreau--Yosida approximations in Hilbert spaces, which are different from the one defined in \eqref{M env}. In the following proposition we recall some results contained in \cite[Section 3]{CF16}.

\begin{pro}\label{proprieta MY}
Let $x\in $, $\alpha>0$ and $f:X\ra\R\cup\set{+\infty}$ be a proper convex and lower semicontinuous function. The following properties hold:
\begin{enumerate}
\item the function $g_{\alpha,x}:H\ra\R$ defined as \(g_{\alpha,x}(h):=f(x+h)+\frac{1}{2\alpha}\abs{h}^2_H\), has a unique global minimum point $P(x,\alpha)\in H$. Moreover $P(x,\alpha)\ra 0$ in $H$ as $\alpha$ goes to zero;\label{esistenza minimo}

\item $f_\alpha(x)\nearrow f(x)$ as $\alpha\ra 0^+$. In particular $f_\alpha(x)\leq f(x)$ for every $\alpha>0$ and $x\in X$;\label{convergenza MY}

\item for $p\in H$, we have $p=P(x,\alpha)$ if, and only if, \(f(x+p)\leq f(x+h)+\frac{1}{\alpha}\gen{p,h-p}_H\), for every $h\in H$;\label{caratterizzazione punto minimo MY}

\item the function $P_{x,\alpha}:H\ra H$ defined as $P_{x,\alpha}(h):=P(x+h,\alpha)$ is Lipschitz continuous, with Lipschitz constant less or equal than $1$;

\item $f_\alpha$ is differentiable along $H$ at every point $x\in X$. In addition, for every $x\in X$, we have \(\nabla_H f_\alpha(x)=-\alpha^{-1}P(x,\alpha)\).\label{differenziabilita MY}

\item $f_\alpha$ belongs to $W^{2,p}(X,\mu)$, whenever $f\in\elle^p(X,\mu)$ for some $1\leq p<+\infty$.\label{Sobolev MY}
\end{enumerate}
\end{pro}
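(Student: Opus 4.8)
The plan is to carry out everything inside the Cameron--Martin space $H$, where a genuine Hilbert structure is available, viewing $f_\alpha$ as the infimal convolution of $f$ with the strictly convex quadratic $h\mapsto\frac{1}{2\alpha}\abs{h}_H^2$. For part~(\ref{esistenza minimo}) I would first observe that $g_{\alpha,x}$ is strictly convex on $H$ (a convex term plus a strictly convex quadratic), which forces uniqueness of any minimizer. For existence I would run the direct method on $H$: since $f$ is proper, convex and lower semicontinuous it possesses a continuous affine minorant $y\mapsto x_0^*(y)+c$ with $x_0^*\in X^*$, and because $H$ embeds continuously into $X$ this minorant grows at most linearly in $\abs{h}_H$, so the quadratic term makes $g_{\alpha,x}$ coercive; moreover $h\mapsto f(x+h)$ is strongly lower semicontinuous on $H$ (strong convergence in $H$ forces convergence in $X$) and convex, hence weakly lower semicontinuous. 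A minimizing sequence is then bounded, admits a weakly convergent subsequence, and its limit is the required $P(x,\alpha)$. Comparing with the competitor $h=0$ gives $\frac{1}{2\alpha}\abs{P(x,\alpha)}_H^2\leq f(x)-f(x+P(x,\alpha))$, which together with the affine minorant yields $P(x,\alpha)\ra 0$ in $H$ as $\alpha\ra 0^+$.

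Part~(\ref{convergenza MY}) is then quick: $f_\alpha(x)\leq f(x)$ from $h=0$, monotonicity from $g_{\alpha_1,x}\geq g_{\alpha_2,x}$ when $\alpha_1<\alpha_2$, and the upgrade $f_\alpha(x)\nearrow f(x)$ from $f_\alpha(x)\geq f(x+P(x,\alpha))$ combined with lower semicontinuity along $P(x,\alpha)\ra 0$. For the optimality condition in part~(\ref{caratterizzazione punto minimo MY}) I would test minimality of $p:=P(x,\alpha)$ against $v_t=(1-t)p+th$, bound $f(x+v_t)\leq(1-t)f(x+p)+tf(x+h)$ by convexity, divide by $t$ and let $t\ra 0^+$ to get $f(x+p)\leq f(x+h)+\frac{1}{\alpha}\gen{p,h-p}_H$; the converse is a one-line computation using $\abs{h}_H^2-\abs{p}_H^2=2\gen{p,h-p}_H+\abs{h-p}_H^2$.

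The Lipschitz estimate (property~(4)) and the differentiability statement in part~(\ref{differenziabilita MY}) I would prove exactly along the lines of Section~\ref{Projection on convex set along $H$}. For the Lipschitz bound, write the characterization of part~(\ref{caratterizzazione punto minimo MY}) at the two base points $x+h_1$ and $x+h_2$, insert the admissible competitors obtained by translating one minimizer to the other fibre, and add the two inequalities; the $f$-values cancel and, after the usual $\abs{\cdot}_H^2$ identity, one is left with $\abs{P(x+h_1,\alpha)-P(x+h_2,\alpha)}_H^2\leq\gen{P(x+h_1,\alpha)-P(x+h_2,\alpha),h_1-h_2}_H$, so Cauchy--Schwarz gives Lipschitz constant at most $1$, just as in Proposition~\ref{punto minimo lip}. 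For differentiability I would produce a two-sided estimate by testing $f_\alpha(x+k)$ with the competitor $P(x,\alpha)-k$ and $f_\alpha(x)$ with $P(x+k,\alpha)+k$; this traps the quantity $f_\alpha(x+k)-f_\alpha(x)+\frac{1}{\alpha}\gen{P(x,\alpha),k}_H$ between $-\frac{3}{2\alpha}\abs{k}_H^2$ and $\frac{1}{2\alpha}\abs{k}_H^2$ once the Lipschitz bound is used on the lower side, and dividing by $\abs{k}_H$ and letting $\abs{k}_H\ra 0$ (uniformly over unit directions) gives $\nabla_H f_\alpha(x)=-\alpha^{-1}P(x,\alpha)$, as in Proposition~\ref{differenziabilita distanza}.

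Finally, for part~(\ref{Sobolev MY}) I would combine integrability with gradient regularity. Membership $f_\alpha\in\elle^p(X,\mu)$ follows from $f_\alpha\leq f\in\elle^p(X,\mu)$ above and the affine minorant below, using that continuous linear functionals are Gaussian and hence lie in every $\elle^q(X,\mu)$. Since $\nabla_H f_\alpha=-\alpha^{-1}P(\cdot,\alpha)$ and the map $P(\cdot,\alpha)$ is $H$-Lipschitz by the Lipschitz property, Theorem~\ref{teorema 5.11.2} places $\nabla_H f_\alpha$ in $W^{1,p}(X,\mu;H)$ for every $p>1$, whence $\nabla_H^2 f_\alpha\in\elle^p(X,\mu;\mathcal{H}_2)$ and $f_\alpha\in W^{2,p}(X,\mu)$. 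I expect the only real obstacle to be the existence step in part~(\ref{esistenza minimo}): the ambient norm of $X$ is not differentiable and $X$ carries no local compactness, so one cannot minimize in $X$ and must instead guarantee well-posedness of the minimization inside $H$ by controlling $f(x+\cdot)$ from below through the affine minorant and the embedding $H\hookrightarrow X$. Once this is secured, every remaining assertion reduces to the Hilbertian convexity arguments already developed for $d_H^2(\cdot,\mathcal{C})$.
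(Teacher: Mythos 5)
The paper itself gives no proof of Proposition \ref{proprieta MY}: it is recalled verbatim from \cite[Section 3]{CF16}, where it is proved for \emph{continuous}, real-valued convex $f$. Your decision to reprove it inside $H$ is therefore the right instinct, and your arguments for parts (\ref{esistenza minimo})--(\ref{differenziabilita MY}) are correct and are essentially the arguments of \cite{CF16} and of Propositions \ref{punto minimo lip} and \ref{differenziabilita distanza} of this paper: coercivity of $g_{\alpha,x}$ from a continuous affine minorant plus the embedding $H\hookrightarrow X$, weak lower semicontinuity from convexity, strict convexity for uniqueness, the variational inequality via the competitor $(1-t)p+th$, and the two-sided bound $-\frac{3}{2\alpha}\abs{k}_H^2\leq f_\alpha(x+k)-f_\alpha(x)+\alpha^{-1}\gen{P(x,\alpha),k}_H\leq\frac{1}{2\alpha}\abs{k}_H^2$, which I have checked and which is exactly what differentiability along $H$ (uniform over unit directions) requires. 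Two blemishes. First, in part (4) your displayed inequality has the wrong sign: writing $p_i=P(x+h_i,\alpha)$, adding the two variational inequalities gives $\abs{p_1-p_2}_H^2\leq\gen{p_2-p_1,h_1-h_2}_H$, not $\gen{p_1-p_2,h_1-h_2}_H$; the map $h\mapsto P(x+h,\alpha)$ is \emph{anti}-monotone (it plays the role of $-m$ in Proposition \ref{punto minimo lip}, whence the sign flip relative to \eqref{dis che serve una sola volta 2}). Since Cauchy--Schwarz is applied to the modulus, your Lipschitz conclusion and its use in part (5) survive unchanged. Second, for extended-valued lower semicontinuous $f$ everything in (1)--(5) tacitly assumes $(x+H)\cap\dom(f)\neq\emptyset$ (otherwise $g_{\alpha,x}\equiv+\infty$ has no minimizer), and your proofs that $P(x,\alpha)\ra 0$ and $f_\alpha(x)\nearrow f(x)$ use $f(x)<+\infty$; points of $(\dom(f)+H)\setminus\dom(f)$ need a small extra argument (if $\sup_\alpha f_\alpha(x)<\infty$ the same estimate forces $P(x,\alpha)\ra0$ and lower semicontinuity gives a contradiction). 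This imprecision sits in the statement itself, inherited from \cite{CF16}, so it is a caveat rather than an error on your part.

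The genuine gap is in part (\ref{Sobolev MY}). Theorem \ref{teorema 5.11.2} applies to \emph{measurable} $H$-Lipschitz maps, and you never establish that $f_\alpha$ --- hence $P(\cdot,\alpha)=-\alpha\nabla_Hf_\alpha$, whose components are countable pointwise limits of difference quotients of $f_\alpha$ --- is measurable. For continuous $f$ this is trivial (the infimum over $H$ may be taken over a countable dense subset), but continuity is precisely the hypothesis this paper drops: for lower semicontinuous extended-valued $f$ an uncountable infimum of Borel functions need not be Borel, and one must argue, for instance, that $\set{f_\alpha<c}$ is the projection onto $X$ of the Borel set $\set{(x,h)\in X\times H\tc f(x+h)+\frac{1}{2\alpha}\abs{h}_H^2<c}$, hence analytic and therefore $\mu$-measurable. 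Moreover, the concluding ``whence $f_\alpha\in W^{2,p}(X,\mu)$'' does not follow formally from what precedes it: $W^{2,p}(X,\mu)$ is by definition the domain of the closure of $(\nabla_H,\nabla_H^2)$ on cylindrical functions, so from ``$f_\alpha\in\elle^p(X,\mu)$, $f_\alpha$ everywhere $H$-differentiable, $\nabla_Hf_\alpha$ measurable and $H$-Lipschitz'' you must still produce cylindrical approximations converging in the graph norm --- for example via conditional expectations $\E_nf_\alpha$ (finite-dimensional convex functions with Lipschitz gradient, hence in $W^{2,p}$ of $(\R^n,\gamma_n)$ with bounds uniform in $n$) together with closability and weak compactness, or via Ornstein--Uhlenbeck smoothing. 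The statement is true and this machinery is standard, but as written part (6) is asserted rather than proved, and it is the one place where the passage from continuous to merely lower semicontinuous $f$ actually bites.
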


We will dedicate this section to prove that for every $x\in X$, $\nabla_H f_\alpha(x)$ converges to $\nabla_H f(x)$ as $\alpha$ goes to zero. In order to obtain such result we need a couple of lemmata.

\begin{lemma}\label{lemma inclusioni subdifferenziale}
Let $f:X\ra\R\cup\set{+\infty}$ be a proper convex and lower semicontinuous function, belonging to $W^{1,p}(X,\mu)$ for some $p>1$. Let $x\in\dom(f)$ and $\alpha>0$. If we let $F:H\ra \R$ be the function defined as $F(h):=f(x+h)$, then $F$ is proper convex and lower semicontinuous. Moreover $\nabla_H f(x)\in\partial F(0)$ and $\nabla_H f_\alpha(x)\in \partial F(P(x,\alpha))$.
\end{lemma}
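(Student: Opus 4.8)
The plan is to dispatch the three assertions in turn: first the structural claim that $F$ is proper convex and lower semicontinuous, then the inclusion at $0$ via convexity, and finally the inclusion at $P(x,\alpha)$, which is essentially a restatement of the variational characterization of the minimum point. Throughout, $\partial F$ should be read as the subdifferential of the convex function $F$ on the Hilbert space $H$, so that $\partial F(h)$ is the set of $v\in H$ (we identify $H$ with its dual) satisfying $F(k)\geq F(h)+\gen{v,k-h}_H$ for every $k\in H$; this is consistent with the statement, since $\nabla_H f(x)$ and $\nabla_H f_\alpha(x)$ are elements of $H$.

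First I would verify the structural claim. Properness is immediate, since $F(0)=f(x)<+\infty$ because $x\in\dom(f)$. Convexity of $F$ follows at once from that of $f$: for $h_1,h_2\in H$ and $\lambda\in[0,1]$ one has $\lambda(x+h_1)+(1-\lambda)(x+h_2)=x+\lambda h_1+(1-\lambda)h_2$, hence $F(\lambda h_1+(1-\lambda)h_2)\leq\lambda F(h_1)+(1-\lambda)F(h_2)$. For lower semicontinuity I would use that $H$ is continuously embedded in $X$ (\cite[Theorem 2.4.5]{Bog98}): if $h_n\ra h$ in $H$ then $x+h_n\ra x+h$ in $X$, so the lower semicontinuity of $f$ yields $\liminf_n F(h_n)=\liminf_n f(x+h_n)\geq f(x+h)=F(h)$.

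For the inclusion $\nabla_H f_\alpha(x)\in\partial F(P(x,\alpha))$ I would read everything off Proposition \ref{proprieta MY}. Writing $p=P(x,\alpha)$, its characterization gives $f(x+p)\leq f(x+h)+\alpha^{-1}\gen{p,h-p}_H$ for every $h\in H$, that is $F(h)\geq F(p)+\gen{-\alpha^{-1}p,\,h-p}_H$ for every $h\in H$. This is precisely the assertion $-\alpha^{-1}p\in\partial F(p)$, and since the same proposition gives $\nabla_H f_\alpha(x)=-\alpha^{-1}P(x,\alpha)$, we conclude $\nabla_H f_\alpha(x)\in\partial F(P(x,\alpha))$. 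This step is routine.

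The delicate inclusion is $\nabla_H f(x)\in\partial F(0)$, which I would reduce to a one–variable convexity estimate. Fixing $h\in H$, the function $\phi(t):=f(x+th)=F(th)$ is convex on $\R$, so its difference quotients $t\mapsto(\phi(t)-\phi(0))/t$ are non-decreasing for $t>0$; evaluating at $t=1$ and comparing with the right derivative at $0$ gives $F(h)-F(0)=\phi(1)-\phi(0)\geq\lim_{t\to0^+}(\phi(t)-\phi(0))/t$. The whole argument then hinges on identifying this one–sided directional derivative with $\gen{\nabla_H f(x),h}_H$, i.e. on the differentiability of $f$ along $H$ at $x$, so that the pointwise $H$-gradient $\nabla_H f(x)$ genuinely computes $\lim_{t\to0^+}(f(x+th)-f(x))/t$. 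This is the main obstacle: it is where the convexity of $f$ together with its $W^{1,p}$-regularity must be invoked to guarantee that at the relevant points $\nabla_H f(x)$ agrees with the directional derivative of the convex slice $\phi$ (for convex Sobolev functions this holds at the points where the pointwise $H$-gradient exists, which is where the inclusion is asserted). Once this identification is in place, the monotone–difference–quotient inequality yields $F(h)\geq F(0)+\gen{\nabla_H f(x),h}_H$ for all $h\in H$, i.e. $\nabla_H f(x)\in\partial F(0)$.
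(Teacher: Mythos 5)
Your handling of the structural claims and of the second inclusion is correct and coincides with the paper's own proof: properness and convexity are immediate, lower semicontinuity follows from the continuous embedding of $H$ into $X$ together with the $\norm{\cdot}_X$-lower semicontinuity of $f$, and the inclusion $\nabla_H f_\alpha(x)\in\partial F(P(x,\alpha))$ is exactly the combination of Proposition \ref{proprieta MY}\eqref{caratterizzazione punto minimo MY} with the identity $\nabla_H f_\alpha(x)=-\alpha^{-1}P(x,\alpha)$ of Proposition \ref{proprieta MY}\eqref{differenziabilita MY}, which is precisely what the paper does.

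The genuine gap is in the inclusion $\nabla_H f(x)\in\partial F(0)$. Your reduction via the convex slice $\phi(t)=f(x+th)$ and monotone difference quotients is fine, but it shifts the entire weight of the proof onto the identity
\[\lim_{t\ra 0^+}\frac{f(x+th)-f(x)}{t}=\gen{\nabla_H f(x),h}_H\qquad\text{for every }h\in H,\]
and this is exactly what you do not prove. Since $f$ is only assumed to belong to $W^{1,p}(X,\mu)$, the symbol $\nabla_H f(x)$ denotes an a.e.-defined Sobolev gradient; there is no a priori pointwise differentiability along $H$ at a given $x\in\dom(f)$, and the assertion that the Sobolev gradient computes one-sided directional derivatives at the relevant points, \emph{simultaneously for all} $h\in H$ (the quantifier order matters here: an argument along lines naturally gives, for each fixed $h$, a full-measure set of $x$, and passing to a single full-measure set valid for all $h$ requires additional work), is a nontrivial theorem about convex functions on Wiener space. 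This is precisely the input the paper imports by citing \cite[Proof of Proposition 3.1]{UZ96}, which yields $f(x+h)\geq f(x)+\gen{\nabla_H f(x),h}_H$ directly. Your fallback --- that the inclusion ``is asserted at the points where the pointwise $H$-gradient exists'' --- reinterprets the lemma rather than proving it: it leaves unverified both that such points exist (let alone form a full-measure set) and that at such points the pointwise gradient is a version of the Sobolev gradient; moreover, the weakened statement would not support the later use of the lemma in Proposition \ref{convergenza gradiente MY}, where nonemptiness of $\partial F(0)$ is needed for a general convex $f\in W^{1,p}(X,\mu)$ before any pointwise differentiability of $f$ is known. The proof would be repaired by invoking \cite[Proof of Proposition 3.1]{UZ96}, as the paper does, or by actually carrying out the identification you flag as ``the main obstacle''.
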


\begin{proof}
Convexity and properness are trivial. Let $H$-$\lim_{n\ra+\infty}h_n=h$. Since $H$ is continuously embedded in $X$, $X$-$\lim_{n\ra+\infty} h_n=h$. By the fact that $f$ is $\norm{\cdot}_X$-lower semicontinuous, we get
\[F(h)=f(x+h)\leq\liminf_{n\ra+\infty}f(x+h_n)=\liminf_{n\ra+\infty}F(h_h).\]
So $F$ is $\abs{\cdot}_H$-lower semicontinuous. Since $x\in\dom(f)$, then $0\in \dom(F)$. In addition, by Proposition \ref{proprieta MY}\eqref{esistenza minimo} we have $f_\alpha(x)=f(x+P(x,\alpha))+(2\alpha)^{-1}\abs{P(x,\alpha)}_H^2\leq f(x)$, so
\[F(P(x,\alpha))=f(x+P(x,\alpha))\leq f(x)-\frac{1}{2\alpha}\abs{P(x,\alpha)}_H^2<+\infty.\]
This implies $P(x,\alpha)\in \dom(F)$. Let $h\in H$, then by \cite[Proof of proposition 3.1]{UZ96} we get
\[F(h)=f(x+h)\geq f(x)+\gen{\nabla_H f(x), h}_H=F(0)+\gen{\nabla_H f(x),h}_H.\]
Moreover by Proposition \ref{proprieta MY}\eqref{caratterizzazione punto minimo MY} and Proposition \ref{proprieta MY}\eqref{differenziabilita MY} we get
\begin{gather*}
F(h)=f(x+h)\geq f(x+P(x,\alpha))-\frac{1}{\alpha}\gen{P(x,\alpha),h-P(x,\alpha)}_H=\\
=F(P(x,\alpha))+\gen{\nabla_H f_\alpha(x),h-P(x,\alpha)}_H.
\end{gather*}
By formula \eqref{subdifferenziale}, we get $\nabla_H f(x)\in\partial F(0)$ and $\nabla_H f_\alpha(x)\in \partial F(P(x,\alpha))$.
\end{proof}

\begin{lemma}\label{MY of MY}
Let $f:X\ra\R\cup\set{+\infty}$ be a proper convex and lower semicontinuous function, belonging to $W^{1,p}(X,\mu)$ for some $p>1$. Let $x\in\dom(f)$ and $\alpha,\beta>0$. Then
\begin{gather}\label{MY alpha+beta}
(f_\alpha)_\beta(x)=f_{\alpha+\beta}(x).
\end{gather}
In particular $\nabla_H (f_\alpha)_\beta(x)=\nabla_H f_{\alpha+\beta}(x)$ and
\begin{gather}\label{monoton MY}
\abs{\nabla_H f_{\alpha+\beta}(x)}_H\leq \abs{\nabla_H f_\alpha(x)}_H,
\end{gather}
for every $x\in X$.
\end{lemma}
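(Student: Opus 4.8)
The plan is to first establish the semigroup identity \eqref{MY alpha+beta} by a direct manipulation of the defining infimum \eqref{M env}, and then to read off the gradient statements from it. Unfolding the two approximations, $(f_\alpha)_\beta(x)$ equals the double infimum over $k,h\in H$ of $f(x+k+h)+\frac{1}{2\beta}\abs{k}_H^2+\frac{1}{2\alpha}\abs{h}_H^2$. Reindexing by $g:=h+k$ and computing, for fixed $g$, the inner infimum $\inf\set{\frac{1}{2\alpha}\abs{h}_H^2+\frac{1}{2\beta}\abs{g-h}_H^2\tc h\in H}$, the fact that $\abs{\cdot}_H$ comes from an inner product makes this a strictly convex quadratic minimization that is solved explicitly: the optimum is $h=\frac{\alpha}{\alpha+\beta}g$, $k=\frac{\beta}{\alpha+\beta}g$, with value $\frac{1}{2(\alpha+\beta)}\abs{g}_H^2$. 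Substituting back gives $(f_\alpha)_\beta(x)=\inf\set{f(x+g)+\frac{1}{2(\alpha+\beta)}\abs{g}_H^2\tc g\in H}=f_{\alpha+\beta}(x)$. Since the two functions coincide at every point, $\nabla_H(f_\alpha)_\beta(x)=\nabla_Hf_{\alpha+\beta}(x)$ follows at once.

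For inequality \eqref{monoton MY} I would track the minimizers. Recall that $f_\alpha$ is convex and, by Proposition \ref{proprieta MY}\eqref{differenziabilita MY}, differentiable along $H$ with $\nabla_Hf_\gamma(x)=-\gamma^{-1}P(x,\gamma)$ for every $\gamma>0$. Applying the optimal split above at the unique minimizer $P(x,\alpha+\beta)$ (uniqueness from Proposition \ref{proprieta MY}\eqref{esistenza minimo}) shows that $Q:=\frac{\beta}{\alpha+\beta}P(x,\alpha+\beta)$ is the minimizer of $k\mapsto f_\alpha(x+k)+\frac{1}{2\beta}\abs{k}_H^2$, while $\frac{\alpha}{\alpha+\beta}P(x,\alpha+\beta)$ is the minimizer $P(x+Q,\alpha)$ defining $f_\alpha(x+Q)$. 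Hence
\[\nabla_Hf_\alpha(x+Q)=-\frac{1}{\alpha}\cdot\frac{\alpha}{\alpha+\beta}P(x,\alpha+\beta)=-\frac{1}{\alpha+\beta}P(x,\alpha+\beta)=\nabla_Hf_{\alpha+\beta}(x),\]
and $Q=-\beta\,\nabla_Hf_{\alpha+\beta}(x)$.

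To finish, I would use the monotonicity of $\nabla_Hf_\alpha$, which follows from the along-$H$ subgradient inequality for the convex, $H$-differentiable function $f_\alpha$ (exactly as quoted from \cite{UZ96} in Lemma \ref{lemma inclusioni subdifferenziale}). Applied to the points $x$ and $x+Q$ it gives $\gen{\nabla_Hf_\alpha(x+Q)-\nabla_Hf_\alpha(x),Q}_H\geq 0$. Setting $a:=\nabla_Hf_\alpha(x+Q)=\nabla_Hf_{\alpha+\beta}(x)$ and $b:=\nabla_Hf_\alpha(x)$ and substituting $Q=-\beta a$, this reads $\beta\pa{\abs{a}_H^2-\gen{b,a}_H}\leq 0$, so $\abs{a}_H^2\leq\gen{b,a}_H\leq\abs{b}_H\abs{a}_H$ by Cauchy--Schwarz, whence $\abs{a}_H\leq\abs{b}_H$, which is precisely \eqref{monoton MY}.

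The step I expect to require the most care is the bookkeeping that the single vector $P(x,\alpha+\beta)$ splits simultaneously into the exact minimizers of the inner and outer problems; this rests on the uniqueness in Proposition \ref{proprieta MY}\eqref{esistenza minimo} together with the value identity just computed, so that the chain of inequalities bounding $(f_\alpha)_\beta(x)$ collapses to equalities. A secondary point to check is that $\nabla_Hf_\alpha$ is genuinely monotone, i.e. that $f_\alpha$, though a priori only real-valued on $D(\dom f)$ and differentiable merely along $H$, still obeys the subgradient inequality. Everything else is the routine quadratic algebra of the inf-convolution.
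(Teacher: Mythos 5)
Your proposal is correct, and its first half coincides with the paper's proof: equality \eqref{MY alpha+beta} is obtained in both cases by unfolding the double infimum, reindexing, and solving the inner quadratic minimization (the paper completes the square where you exhibit the optimal split $h=\frac{\alpha}{\alpha+\beta}g$, which is the same computation). For inequality \eqref{monoton MY} the two arguments share the same skeleton but differ in bookkeeping. The paper invokes Lemma \ref{lemma inclusioni subdifferenziale} applied to $f_\alpha$ to get the inclusions $\nabla_H f_\alpha(x)\in\partial F_\alpha(0)$ and $\nabla_H (f_\alpha)_\beta(x)\in\partial F_\alpha(P_\alpha(x,\beta))$, and then applies the monotonicity of the subdifferential \eqref{monotonicita subdifferenziale}; you instead derive the key relation $\nabla_H f_\alpha\bigl(x+P_\alpha(x,\beta)\bigr)=\nabla_H f_{\alpha+\beta}(x)$ by showing that the chain of inequalities bounding $(f_\alpha)_\beta(x)$ collapses to equalities at the proportional split of $P(x,\alpha+\beta)$, so that $P_\alpha(x,\beta)=\frac{\beta}{\alpha+\beta}P(x,\alpha+\beta)$ and $P(x+Q,\alpha)=\frac{\alpha}{\alpha+\beta}P(x,\alpha+\beta)$; this bookkeeping is sound exactly as you argue (uniqueness of minimizers plus the value identity), and it has the mild advantage of exhibiting the minimizer of the $f_\alpha$-problem explicitly rather than reinvoking existence from Proposition \ref{proprieta MY}, while the paper's route reuses already-proved machinery. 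After that point the two proofs are literally identical: the monotonicity inequality $\gen{\nabla_H f_{\alpha+\beta}(x)-\nabla_H f_\alpha(x),P_\alpha(x,\beta)}_H\geq 0$, the substitution $P_\alpha(x,\beta)=-\beta\nabla_H f_{\alpha+\beta}(x)$, and the Cauchy--Schwarz step yielding $\abs{\nabla_H f_{\alpha+\beta}(x)}_H^2\leq\gen{\nabla_H f_\alpha(x),\nabla_H f_{\alpha+\beta}(x)}_H$. Your closing caveat about the subgradient inequality for $f_\alpha$ is also handled correctly: since $f_\alpha$ is convex and differentiable along $H$ everywhere, the inequality follows from one-dimensional convexity along lines in $H$, so the monotonicity you need is legitimate.
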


\begin{proof}
The proof of equality \eqref{MY alpha+beta} is similar to the one in \cite[Proposition 12.22]{BC11}, we give it just for the sake of completeness.
\begin{gather*}
(f_\alpha)_\beta(x)=\inf_{h\in H}\set{f_\alpha(x+h)+\frac{1}{2\beta}\abs{h}_H^2}=\\
=\inf_{h\in H}\set{\inf_{k\in H}\set{f(x+h+k)+\frac{1}{2\alpha}\abs{k}_H^2}+\frac{1}{2\beta}\abs{h}_H^2}=\\
=\inf_{h\in H}\set{\inf_{w\in H}\set{f(x+w)+\frac{1}{2\alpha}\abs{w-h}_H^2}+\frac{1}{2\beta}\abs{h}_H^2}=
\end{gather*}
\begin{gather*}
=\inf_{w\in H}\set{f(x+w)+\inf_{h\in H}\set{\frac{1}{2\alpha}\abs{w-h}_H^2+\frac{1}{2\beta}\abs{h}_H^2}}=
\end{gather*}
\begin{gather*}
=\inf_{w\in H}\set{f(x+w)+\frac{\alpha+\beta}{2\alpha\beta}\inf_{h\in H}\set{\frac{\beta}{\alpha+\beta}\abs{w-h}_H^2+\frac{\alpha}{\alpha+\beta}\abs{h}_H^2}}=
\end{gather*}
\begin{gather*}
=\inf_{w\in H}\set{f(x+w)+\frac{\alpha+\beta}{2\alpha\beta}\inf_{h\in H}\set{\frac{\alpha\beta}{(\alpha+\beta)^2}\abs{w}^2_H+\abs{\frac{\beta}{\alpha+\beta}w-h}_H^2}}=\\
=\inf_{w\in H}\set{f(x+w)+\frac{1}{2(\alpha+\beta)}\abs{w}^2_H}=f_{\alpha+\beta}(x).
\end{gather*}
So $(f_\alpha)_\beta(x)=f_{\alpha+\beta}(x)$.

We will now prove inequality \eqref{monoton MY}. Let $x\in X$ and $\alpha,\beta>0$. By Lemma \ref{lemma inclusioni subdifferenziale} we get $\nabla_H f_\alpha(x)\in \partial F_\alpha(0)$ and $\nabla_H(f_\alpha)_\beta(x)\in\partial F_\alpha(P_\alpha(x,\beta))$, where $F_\alpha(h):=f_\alpha(x+h)$ for $h\in H$ and $P_\alpha(x,\beta)$ is the unique minimum point of the function \(f_\alpha(x+h)+\frac{1}{2\beta}\abs{h}_H^2\). Such minimum exists by Proposition \ref{proprieta MY}\eqref{esistenza minimo}. By the monotonicity of the subdifferential (formula \eqref{monotonicita subdifferenziale}), Proposition \ref{proprieta MY}\eqref{differenziabilita MY} and equality \eqref{MY alpha+beta} we get
\begin{gather*}
0\leq\gen{\nabla_H f_\alpha(x)-\nabla_H(f_\alpha)_\beta(x),-P_\alpha(x,\beta)}_H\leq \beta\gen{\nabla_H f_\alpha(x)-\nabla_H(f_\alpha)_\beta(x),-\beta^{-1}P_\alpha(x,\beta)}_H\leq\\
\leq \beta\gen{\nabla_H f_\alpha(x)-\nabla_H(f_\alpha)_\beta(x),\nabla_H(f_\alpha)_\beta(x)}_H= \beta\gen{\nabla_H f_\alpha(x)-\nabla_H f_{\alpha+\beta}(x),\nabla_H f_{\alpha+\beta}(x)}_H.
\end{gather*}
So $\abs{\nabla_H f_{\alpha+\beta}(x)}_H^2\leq \gen{\nabla_H f_\alpha(x),\nabla_H f_{\alpha+\beta}(x)}_H$, and $\abs{\nabla_H f_{\alpha+\beta}(x)}_H\leq\abs{\nabla_H f_\alpha(x)}_H$.
\end{proof}

Now we have all the ingredients required to prove a convergence results about $\nabla_H f_\alpha$.

\begin{pro}\label{convergenza gradiente MY}
Let $f:X\ra\R\cup\set{+\infty}$ be a proper convex and lower semicontinuous function, belonging to $W^{1,p}(X,\mu)$ for some $p>1$. Let $x\in\dom(f)$. Then $\abs{\nabla_H f_\alpha(x)}_H\nearrow\abs{\nabla_H f(x)}_H$ as $\alpha\ra 0^+$ for $\mu$-a.e. $x\in X$. In particular
\begin{gather}\label{monotonia convergenza gradiente MY}
\abs{\nabla_H f_\alpha(x)}_H\leq \abs{\nabla_H f(x)}_H
\end{gather}
for $\mu$-a.e. $x\in X$ and for every $\alpha>0$.
\end{pro}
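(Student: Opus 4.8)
The plan is to first pin down the monotone convergence together with the upper bound pointwise, and only afterwards identify the limit. For the monotonicity, Lemma \ref{MY of MY} already tells us that for fixed $x$ the map $\alpha\mapsto\abs{\nabla_H f_\alpha(x)}_H$ is nonincreasing in $\alpha$, hence \emph{nondecreasing} as $\alpha\ra 0^+$; consequently the limit $L(x):=\lim_{\alpha\ra 0^+}\abs{\nabla_H f_\alpha(x)}_H$ exists in $[0,+\infty]$ for every admissible $x$. For the upper bound I would simply reuse the manipulation performed at the end of the proof of Lemma \ref{MY of MY}. By Lemma \ref{lemma inclusioni subdifferenziale} we have $\nabla_H f(x)\in\partial F(0)$ and $\nabla_H f_\alpha(x)\in\partial F(P(x,\alpha))$, where $F(h)=f(x+h)$. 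Applying the monotonicity of the subdifferential (formula \eqref{monotonicita subdifferenziale}, applied to $F$ on $H$) to these two points, and using $P(x,\alpha)=-\alpha\nabla_H f_\alpha(x)$ from Proposition \ref{proprieta MY}\eqref{differenziabilita MY}, one gets $\abs{\nabla_H f_\alpha(x)}_H^2\le\gen{\nabla_H f(x),\nabla_H f_\alpha(x)}_H\le\abs{\nabla_H f(x)}_H\abs{\nabla_H f_\alpha(x)}_H$, whence $\abs{\nabla_H f_\alpha(x)}_H\le\abs{\nabla_H f(x)}_H$. This already yields inequality \eqref{monotonia convergenza gradiente MY} and shows $L(x)\le\abs{\nabla_H f(x)}_H<+\infty$ for $\mu$-a.e.\ $x$.

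It remains to prove the reverse inequality $L(x)\ge\abs{\nabla_H f(x)}_H$. Fix a point $x$ where $\nabla_H f(x)$ exists. Since $\set{\nabla_H f_\alpha(x)}$ is bounded in $H$ by $\abs{\nabla_H f(x)}_H$, I would extract a sequence $\alpha_n\ra 0$ with $\nabla_H f_{\alpha_n}(x)\rightharpoonup g$ weakly in $H$; weak lower semicontinuity of the norm then gives $\abs{g}_H\le L(x)$. Because $P(x,\alpha_n)=-\alpha_n\nabla_H f_{\alpha_n}(x)\ra 0$ in $H$, I pass to the limit in the subgradient inequality $F(h)\ge F(P(x,\alpha_n))+\gen{\nabla_H f_{\alpha_n}(x),h-P(x,\alpha_n)}_H$: the inner-product term converges to $\gen{g,h}_H$, while $\liminf_n F(P(x,\alpha_n))\ge F(0)$ by the lower semicontinuity of $F$ established in Lemma \ref{lemma inclusioni subdifferenziale}. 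This produces $F(h)\ge F(0)+\gen{g,h}_H$ for all $h\in H$, i.e.\ $g\in\partial F(0)$. If $\partial F(0)$ reduces to the singleton $\set{\nabla_H f(x)}$, then $g=\nabla_H f(x)$, so $\abs{\nabla_H f(x)}_H=\abs{g}_H\le L(x)$, and together with the upper bound this forces $L(x)=\abs{\nabla_H f(x)}_H$; since the full limit exists by monotonicity, the claim follows (one in fact obtains $\nabla_H f_\alpha(x)\ra\nabla_H f(x)$ strongly in $H$).

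The main obstacle is precisely the justification that $\partial F(0)=\set{\nabla_H f(x)}$ for $\mu$-a.e.\ $x$, that is, the $\mu$-a.e.\ $H$-Gâteaux differentiability of the convex function $f$; this is the step where convexity is truly used, and it is the point I would expect to require the most care. A cleaner way to circumvent pointwise differentiability is a global argument: from Proposition \ref{proprieta MY}\eqref{convergenza MY} and dominated convergence one has $f_\alpha\ra f$ in $\elle^p(X,\mu)$, while the pointwise bound just proved keeps $\set{\nabla_H f_\alpha}$ bounded in $\elle^p(X,\mu;H)$; by reflexivity and the closedness of the gradient operator, $\nabla_H f_\alpha\rightharpoonup\nabla_H f$ weakly in $\elle^p(X,\mu;H)$, so weak lower semicontinuity gives $\int_X\abs{\nabla_H f}_H^p\,d\mu\le\liminf_\alpha\int_X\abs{\nabla_H f_\alpha}_H^p\,d\mu$, whereas the pointwise bound gives the opposite inequality. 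Hence $\int_X\abs{\nabla_H f_\alpha}_H^p\,d\mu\ra\int_X\abs{\nabla_H f}_H^p\,d\mu$, and combining this with the monotone convergence $\int_X\abs{\nabla_H f_\alpha}_H^p\,d\mu\ra\int_X L^p\,d\mu$ together with the pointwise inequality $L\le\abs{\nabla_H f}_H$ forces $L=\abs{\nabla_H f}_H$ $\mu$-a.e., as required.
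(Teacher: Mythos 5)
Your proof is correct, but the mechanism you use to identify the limit is genuinely different from the paper's, and it is worth comparing the two. The gap you flag in your pointwise route (that $\partial F(0)$ need not be the singleton $\set{\nabla_H f(x)}$) is real, and the paper circumvents it not by proving a.e. $H$-G\^ateaux differentiability but by a minimal-norm selection: it fixes $h_0$ of minimal norm in $\partial F(0)$, runs the subdifferential-monotonicity inequality against $h_0$ (your inequality $\abs{\nabla_H f_\alpha(x)}_H^2\le\gen{\nabla_H f(x),\nabla_H f_\alpha(x)}_H$ is exactly the paper's, with $\nabla_H f(x)$ in place of $h_0$), shows any weak cluster point of $\nabla_H f_{\alpha_n}(x)$ lies in $\partial F(0)$ and hence has norm at least $\abs{h_0}_H$, deduces norm convergence and then \emph{strong} convergence $\nabla_H f_{\alpha_n}(x)\ra h_0$ in $H$, and only at the very end identifies $h_0=\nabla_H f(x)$ $\mu$-a.e. through dominated convergence and the closedness of $\nabla_H$ in $\elle^p$. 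Your replacement is a clean global argument: the pointwise bound \eqref{monotonia convergenza gradiente MY} (which you obtain more directly, since Lemma \ref{lemma inclusioni subdifferenziale} already puts $\nabla_H f(x)$ in $\partial F(0)$, so no minimal-norm element is needed for this part), plus reflexivity of $\elle^p(X,\mu;H)$, weak closedness of the graph of $\nabla_H$, weak lower semicontinuity of the norm, and the monotone convergence theorem applied to $\abs{\nabla_H f_{\alpha}}_H^p\nearrow L^p$, forcing $L=\abs{\nabla_H f}_H$ a.e. from $\int_X L^p d\mu=\int_X\abs{\nabla_H f}_H^pd\mu$ and $L\le\abs{\nabla_H f}_H$. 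This avoids all pointwise weak-compactness arguments in $H$ and is arguably simpler; what it gives up is the stronger by-product the paper extracts, namely a.e. strong $H$-convergence of the vectors $\nabla_H f_\alpha(x)$ and strong convergence $\nabla_H f_\alpha\ra\nabla_H f$ in $\elle^p(X,\mu;H)$ (your argument yields weak $\elle^p$ convergence plus convergence of norms, from which strong convergence would follow by uniform convexity, but you do not need it). Since the proposition as stated only asserts the monotone a.e. convergence of the norms, your argument proves it in full.
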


\begin{proof}
Let $x\in \dom(f)$ and $\alpha>0$. By Lemma \ref{lemma inclusioni subdifferenziale} we get $\nabla_H f_\alpha(x)\in \partial F(P(x,\alpha))$, where $F(h):=f(x+h)$ for $h\in H$ and $P(x,\alpha)$ is the unique minimum point of the function \(f(x+h)+\frac{1}{2\alpha}\abs{h}_H^2\). Such minimum exists by Proposition \ref{proprieta MY}\eqref{esistenza minimo}. By the weak compactness of the subdifferential there exists a point of minimal norm $h_0\in H$ in $\partial F(0)$.

By the monotonicity of the subdifferential (formula \eqref{monotonicita subdifferenziale}) we have
\begin{gather*}
0\leq \gen{P(x,\alpha),\nabla_H f_\alpha(x)-h_0}_H=\alpha\gen{-\alpha^{-1}P(x,\alpha),h_0-\nabla_H f_\alpha(x)}_H
=\alpha\gen{\nabla_H f_\alpha(x),h_0-\nabla_H f_\alpha(x)}_H,
\end{gather*}
where the last equality follows from Lemma \ref{proprieta MY}\eqref{differenziabilita MY}. By the Cauchy--Schwarz inequality we get
\begin{gather}\label{disuguaglianza 2}
\abs{\nabla_H f_{\alpha}(x)}_H^2\leq \gen{h_0,\nabla_H f_{\alpha}(x)}_H.
\end{gather}
Using inequality \eqref{disuguaglianza 2} we get
\begin{gather}
\label{disuguaglianza 3}\displaystyle \abs{\nabla_Hf_\alpha}_H\leq\abs{h_0}_H;\\
\label{disuguaglianza 3.1}\displaystyle\abs{\nabla_H f_\alpha (x)-h_0}_H^2=\abs{\nabla_H f_\alpha(x)}^2_H-2\gen{h_0,\nabla_Hf_\alpha(x)}_H+\abs{h_0}_H^2\leq \abs{h_0}_H^2-\abs{\nabla_Hf_\alpha(x)}^2_H.
\end{gather}

By inequality \eqref{disuguaglianza 3} we get that the set $\set{\nabla_H f_\alpha(x)\tc \alpha>0} $ is bounded in $H$. Let $(\alpha_n)_{n\in\N}$ be a sequence converging to zero. By weak compactness a subsequence, that we will still denote by $(\alpha_n)_{n\in\N}$, and $y\in H$ exist such that $\nabla_H f_{\alpha_n}(x)$ weakly converges to $y$ as $n$ goes to $+\infty$. By inequality \eqref{disuguaglianza 3} and weakly lower semicontinuity of $\abs{\cdot}_H$ we have that
\begin{gather}\label{dis lsc}
\abs{y}_H\leq \lim_{n\ra +\infty}\abs{\nabla_H f_{\alpha_n}(x)}_H\leq \abs{h_0}_H.
\end{gather}
We claim that $y\in \partial F(0)$, indeed recalling that $\abs{P(x,\alpha_n)}_H\ra 0$ as $n$ goes to $+\infty$ (Proposition \ref{proprieta MY}\eqref{esistenza minimo}), $\set{\nabla_H f_\alpha(x)\tc \alpha>0} $ is bounded in $H$ and $f$ is lower semicontinuous we have
\begin{gather*}
\gen{y,h}_H=\lim_{n\ra+\infty}\gen{\nabla_H f_{\alpha_n}(x),h}_H\leq\lim_{n\ra+\infty}\pa{f(x+h)-f(x+P(x,\alpha_n))+\gen{\nabla_H f_{\alpha_n}(x),P(x,\alpha_n)}_H}\leq\\
\leq \limsup_{n\ra+\infty}\pa{f(x+h)-f(x+P(x,\alpha_n))+\gen{\nabla_H f_{\alpha_n}(x),P(x,\alpha_n)}_H}\leq\\
\leq f(x+h)-\liminf_{n\ra+\infty} f(x+P(x,\alpha))+\lim_{n\ra+\infty}\gen{\nabla_H f_{\alpha_n}(x),P(x,\alpha_n)}_H\leq f(x+h)-f(x)=F(h)-F(0)
\end{gather*}
and since $0\in \dom(F)$, then $y\in \partial F(0)$. By the fact that $h_0$ is an element of minimal norm in $\partial F(0)$, then all the inequalities in \eqref{dis lsc} are actually equalities. So
\begin{gather*}
\lim_{n\ra+\infty}\abs{\nabla_H f_{\alpha_n}(x)}_H=\abs{h_0}_H.
\end{gather*}
So by inequality \eqref{disuguaglianza 3.1} we have $\lim_{n\ra+\infty}\abs{\nabla_H f_{\alpha_n}(x)-h_0}_H=0$. Since $f$ belongs to $W^{1,p}(X,\mu)$ for some $p>1$, $f_{\alpha_n}$ converges to $f$ in $\elle^p(X,\mu)$ as $n$ goes to $+\infty$ (Proposition \ref{proprieta MY}\eqref{convergenza MY}) and $f_{\alpha}\in W^{2,p}(X,\mu)$ (Proposition \ref{proprieta MY}\eqref{Sobolev MY}), then $h_0=\nabla_H f(x)$ for $\mu$-a.e. $x\in X$. So we get inequality \eqref{monotonia convergenza gradiente MY} using inequality \eqref{disuguaglianza 3}.

We have proved that for every sequence $(\alpha_n)_{n\in\N}$ converging to zero, there exists a subsequence $(\alpha_{n_k})_{k\in\N}$ such that
\[\lim_{k\ra+\infty}\nabla_H f_{\alpha_{n_k}}=\nabla_H f\]
in $\elle^p(X,\mu;H)$. This implies that $\nabla_H f_\alpha$ converges to $\nabla_H f$ in $\elle^p(X,\mu;H)$ as $\alpha$ goes to zero. Monotonicity of the convergence of the norms follows from inequality \eqref{monoton MY}.
\end{proof}

\section{Sobolev regularity estimates}\label{Sobolev regularity estimates}

The purpose of this section is to prove Theorem \ref{Main theorem 1} and in order to do so we will use a penalization method similar to the one used in \cite{BDPT09}, \cite{BDPT11} and \cite{DPL15}. For $\alpha>0$ let $U_\alpha$ be the Moreau--Yosida approximation along $H$ of $U$, as defined in formula \eqref{M env}. We recall the following proposition (see \cite[Proposition 5.12]{CF16})
\begin{pro}\label{Integrabilita MY}
Assume Hypothesis \ref{ipotesi peso} holds and let $\alpha\in(0,1]$. Then $U_\alpha$ satisfies Hypothesis \ref{ipotesi peso} and $U_\alpha$ is differentiable along $H$ at every $x\in X$ with $\nabla_H U_\alpha$ $H$-Lipschitz. Moreover $e^{-U_\alpha}\in W^{1,p}(X,\mu)$, for every $p\geq 1$, and $U_\alpha\in W^{2,t}(X,\mu)$, where $t$ is given by Hypothesis \ref{ipotesi peso}.
\end{pro}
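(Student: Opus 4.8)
The plan is to verify the listed properties one at a time, relying on Proposition \ref{proprieta MY} for everything that involves the minimiser $P(\cdot,\alpha)$, and handling by hand the two points that do not follow formally from it, namely the lower semicontinuity of $U_\alpha$ and the integrability of $e^{-U_\alpha}$.

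First I would dispose of the structural facts. Since $U_\alpha$ is the infimal convolution along $H$ of the convex function $U$ with the convex quadratic $h\mapsto(2\alpha)^{-1}\abs{h}_H^2$, it is convex. It is proper: by Proposition \ref{proprieta MY}\eqref{convergenza MY} we have $U_\alpha\leq U$, so $U_\alpha$ is finite on the nonempty set $\dom(U)$; and since a proper convex lower semicontinuous $U$ admits a continuous affine minorant $\ell=c+x^*$ with $x^*\in X^*$, one gets $U_\alpha(x)\geq\inf_{h\in H}\{\ell(x+h)+(2\alpha)^{-1}\abs{h}_H^2\}>-\infty$, using that $x^*$ restricted to $H$ is bounded (because $H$ embeds continuously in $X$), so $U_\alpha$ never equals $-\infty$. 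Differentiability along $H$ at every point together with the formula $\nabla_H U_\alpha=-\alpha^{-1}P(\cdot,\alpha)$ is exactly Proposition \ref{proprieta MY}\eqref{differenziabilita MY}; combining it with the $1$-Lipschitz (along $H$) dependence of $P(\cdot,\alpha)$ from Proposition \ref{proprieta MY}(4) shows that $\nabla_H U_\alpha$ is $H$-Lipschitz with constant $\alpha^{-1}$.

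Next I would prove lower semicontinuity by showing the epigraph is closed. Take $x_n\ra x$ in $X$ with $U_\alpha(x_n)\ra L$, and put $h_n=P(x_n,\alpha)$, so $U(x_n+h_n)+(2\alpha)^{-1}\abs{h_n}_H^2=U_\alpha(x_n)$. The affine bound $U(x_n+h_n)\geq c+x^*(x_n)+x^*(h_n)$ with $\abs{x^*(h_n)}\leq C\abs{h_n}_H$ forces $(h_n)$ to be bounded in $H$; extracting $h_n\rightharpoonup h$ weakly in $H$ and using that $H\hookrightarrow X$ is continuous, one gets $x_n+h_n\rightharpoonup x+h$ weakly in $X$, whence weak lower semicontinuity of the convex lower semicontinuous $U$ and of $\abs{\cdot}_H$ yield $U_\alpha(x)\leq U(x+h)+(2\alpha)^{-1}\abs{h}_H^2\leq\liminf_n U_\alpha(x_n)$. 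Thus $U_\alpha$ is proper, convex and lower semicontinuous. For the Sobolev memberships I would simply invoke Proposition \ref{proprieta MY}\eqref{Sobolev MY} with $p=t$, which is legitimate since $U\in W^{1,t}(X,\mu)\subseteq\elle^t(X,\mu)$; this gives $U_\alpha\in W^{2,t}(X,\mu)$ at once, and in particular $U_\alpha\in W^{1,t}(X,\mu)$, so $U_\alpha$ satisfies Hypothesis \ref{ipotesi peso}.

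It remains to treat $e^{-U_\alpha}$, for which I collect two integrability inputs. Since $\nabla_H U_\alpha$ is an $H$-valued $H$-Lipschitz map, Theorem \ref{teorema 5.11.2} gives $\nabla_H U_\alpha\in\elle^p(X,\mu;H)$ for every $p>1$; and the affine minorant gives $e^{-U_\alpha}\leq e^{-c}e^{-x^*}$, with $e^{-x^*}$ having finite moments of every order because $x^*$ is a centred Gaussian under $\mu$, so $e^{-U_\alpha}\in\elle^p(X,\mu)$ for every $p$. The chain rule $\nabla_H e^{-U_\alpha}=-e^{-U_\alpha}\nabla_H U_\alpha$, justified as in \cite[Lemma 7.5]{AB06} once the right-hand side is known to be integrable, together with Hölder's inequality then places $e^{-U_\alpha}$ and its gradient in $\elle^p(X,\mu)$ for all $p\geq 1$, i.e. $e^{-U_\alpha}\in W^{1,p}(X,\mu)$ for every $p\geq1$. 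The main obstacle is the lower semicontinuity step: in infinite dimensions the infimal convolution of a lower semicontinuous function need not remain lower semicontinuous, and the closedness of the epigraph genuinely rests on the coercivity of the quadratic penalty (to bound the minimisers $h_n$ in $H$), on weak compactness of $H$-balls, and on weak lower semicontinuity of the convex $U$; the secondary delicate point is making the chain rule for $e^{-U_\alpha}$ rigorous in the infinite-dimensional setting, which is done by approximation once the above integrability bounds are secured.
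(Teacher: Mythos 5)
Your proposal is correct in substance, but be aware that the paper itself gives no proof of this proposition: it is recalled verbatim from \cite[Proposition 5.12]{CF16}, so there is no internal argument to compare against, and your reconstruction is necessarily a different (self-contained) route. What you do is assemble the statement from the ingredients the paper does state --- Proposition \ref{proprieta MY} for the differentiability, the identity $\nabla_H U_\alpha=-\alpha^{-1}P(\cdot,\alpha)$, the $1$-Lipschitz dependence of $P(\cdot,\alpha)$ (hence the $\alpha^{-1}$-Lipschitz bound on $\nabla_H U_\alpha$), and the $W^{2,t}$ membership via point (6) with $p=t$ --- plus two steps done by hand: the lower semicontinuity of $U_\alpha$ and the membership $e^{-U_\alpha}\in W^{1,p}(X,\mu)$ for all $p\geq 1$. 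The lower-semicontinuity argument (affine minorant for coercivity, boundedness of the minimisers in $H$, weak compactness, weak-to-weak continuity of the embedding $H\hookrightarrow X$, weak lower semicontinuity of the convex lsc $U$ and of $\abs{\cdot}_H$) is correct, and it is in fact the substantive added value: \cite{CF16} assumes $U$ continuous, whereas Hypothesis \ref{ipotesi peso} here only assumes lower semicontinuity, so your step is exactly the verification needed to justify that the citation survives the weakening; the paper leaves this implicit. Your treatment of $e^{-U_\alpha}$ (Gaussian exponential moments of the affine minorant for the $\elle^p$ bound, Theorem \ref{teorema 5.11.2} for $\nabla_H U_\alpha$, then the chain rule plus H\"older) parallels how the paper later handles $V_\alpha$ in Proposition \ref{check proprieta V alpha}.

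Two loose ends remain, neither fatal. First, Theorem \ref{teorema 5.11.2} requires \emph{measurable} $H$-Lipschitz maps, so you should record why $\nabla_H U_\alpha=-\alpha^{-1}P(\cdot,\alpha)$ is measurable; once your lsc step is done, $U_\alpha$ is Borel, each $\partial_k U_\alpha$ is a pointwise limit of measurable difference quotients, and measurability of the $H$-valued map follows coordinatewise. Second, the chain rule $\nabla_H e^{-U_\alpha}=-e^{-U_\alpha}\nabla_H U_\alpha$ is only sketched; it does close via the truncation/approximation you indicate, using the closability of $\nabla_H$ in $\elle^p(X,\mu)$, but as written it is an appeal to \cite[Lemma 7.5]{AB06} in a setting (all $p\geq 1$ rather than $p<t$) slightly stronger than what that lemma states, so the approximation argument should be spelled out rather than cited.
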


We approach the problem in $\Omega$ by penalized problems in the whole space $X$, replacing $U$ by
\begin{gather}\label{Problema penalizzato}
V_\alpha(x):=U_\alpha(x)+\frac{1}{2\alpha}d_H^2(x,\Omega).
\end{gather}
for $\alpha\in(0,1]$. Namely for $\alpha\in(0,1]$, we consider the problem
\begin{gather}\label{problema con alpha}
\lambda u_\alpha-L_{\nu_\alpha}u_\alpha=f
\end{gather}
where $\lambda>0$, $f\in\elle^2(X,\nu_\alpha)$, $\nu_\alpha=e^{-V_\alpha}\mu$ and $L_{\nu_\alpha}$ is the operator defined as
\begin{gather*}
D(L_{\nu_\alpha})=\bigg\{u\in W^{1,2}(X,\nu_\alpha)\,\Big |\, \text{there exists }v\in\elle^2(X,\nu_\alpha)\text{ such that }\\
 \int_X\gen{\nabla_H u,\nabla_H \varphi}d\nu_\alpha=-\int_Xv\varphi d\nu_\alpha\text{ for every }\varphi\in\fcon^\infty_b(X)\bigg\},
\end{gather*}
with $L_{\nu_\alpha}u=v$ if $u\in D(L_{\nu_\alpha})$.

We set \(D(\Omega)=\set{x\in X\tc H\cap(x-\Omega)\neq\emptyset}\), as in Section \ref{Projection on convex set along $H$}. We remark that $\Omega\subseteq D(\Omega)$ and if Hypothesis \ref{ipotesi dominio} holds, then $\mu(D(\Omega))=1$ (Lemma \ref{lemma translation invaritant}).

\begin{pro}\label{check proprieta V alpha}
Assume Hypotheses \ref{ipotesi dominio} and \ref{ipotesi peso} hold and let $\alpha\in(0,1]$. Then the following properties hold:
\begin{enumerate}
\item $V_\alpha$ is a convex and $H$-continuous function;

\item $V_\alpha$ is differentiable along $H$ at every point $x\in D(\Omega)$ with $\nabla_H V_\alpha$ $H$-Lipschitz;\label{Valpha gradiente Hlip}

\item $e^{-V_\alpha}\in W^{1,p}(X,\mu)$, for every $p\geq 1$;

\item $V_\alpha\in W^{2,t}(X,\mu)$, where $t$ is given by Hypothesis \ref{ipotesi peso};

\item $\lim_{\alpha\ra 0^+}V_\alpha(x)=\eqsys{U(x) & x\in\Omega;\\ +\infty & x\notin \Omega.}$\label{convergenza V alpha}
\end{enumerate}
\end{pro}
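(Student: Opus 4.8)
The plan is to regard $V_\alpha$ as the sum of $U_\alpha$, whose properties are supplied by Proposition \ref{Integrabilita MY}, and the rescaled squared distance $\frac{1}{2\alpha}d_H^2(\cdot,\Omega)$, whose properties come from Section \ref{Projection on convex set along $H$}. First I would record that, by Hypothesis \ref{ipotesi dominio}\eqref{ipo dominio convessita e chiusura}, $\Omega$ is closed, convex and of positive measure, so Lemma \ref{lemma translation invaritant} gives $\mu(D(\Omega))=1$ and all the results of Section \ref{Projection on convex set along $H$} apply with $\mathcal{C}=\Omega$. For (1): $U_\alpha$ is convex because it satisfies Hypothesis \ref{ipotesi peso} (Proposition \ref{Integrabilita MY}) and is $H$-continuous because it is differentiable along $H$ at every point, while $\frac{1}{2\alpha}d_H^2(\cdot,\Omega)$ is convex and $H$-continuous by Proposition \ref{convessita e continuita distanza}; hence $V_\alpha$ is convex and $H$-continuous. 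For (2): $U_\alpha$ is differentiable along $H$ everywhere with $\nabla_H U_\alpha$ $H$-Lipschitz (Proposition \ref{Integrabilita MY}), and $\frac{1}{2\alpha}d_H^2(\cdot,\Omega)$ is differentiable along $H$ at every $x\in D(\Omega)$ with $H$-gradient $\frac{1}{\alpha}m(x,\Omega)$ (Proposition \ref{differenziabilita distanza}), where $m(\cdot,\Omega)$ is $H$-Lipschitz with constant at most $1$ (Proposition \ref{punto minimo lip}). Thus on $D(\Omega)$ one has $\nabla_H V_\alpha=\nabla_H U_\alpha+\frac{1}{\alpha}m(\cdot,\Omega)$, a sum of $H$-Lipschitz maps, which is $H$-Lipschitz.

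For (3) and (4) the key is to isolate the bounded, well-behaved factors produced by the distance. Writing $e^{-V_\alpha}=e^{-U_\alpha}g$ with $g:=e^{-\frac{1}{2\alpha}d_H^2(\cdot,\Omega)}$, I would observe that $0\leq g\leq 1$ and that its $H$-gradient $\nabla_H g=-\frac{1}{\alpha}m(\cdot,\Omega)g$ is bounded, since $\abs{\nabla_H g}_H=\frac{1}{\alpha}d_H(\cdot,\Omega)e^{-d_H^2(\cdot,\Omega)/(2\alpha)}$ and $s\mapsto\frac{1}{\alpha}s\,e^{-s^2/(2\alpha)}$ is bounded on $[0,+\infty)$; hence $g$ is bounded and $H$-Lipschitz, so $g\in W^{1,p}(X,\mu)$ for all $p>1$ by Theorem \ref{teorema 5.11.2}. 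Combining this with $e^{-U_\alpha}\in W^{1,p}(X,\mu)$ (Proposition \ref{Integrabilita MY}) through the Leibniz rule, and using $g\leq 1$ together with the boundedness of $\nabla_H g$ to control both summands of $\nabla_H(e^{-U_\alpha}g)$ in $\elle^p$, yields (3). For (4), $U_\alpha\in W^{2,t}(X,\mu)$ by Proposition \ref{Integrabilita MY}; for the distance term I would note that $\nabla_H d_H^2(\cdot,\Omega)=2m(\cdot,\Omega)$ is $H$-Lipschitz, so $m(\cdot,\Omega)\in W^{1,p}(X,\mu;H)$ for every $p>1$ by Theorem \ref{teorema 5.11.2}, which together with $d_H^2(\cdot,\Omega)\in W^{1,p}(X,\mu)$ (Proposition \ref{convessita e continuita distanza}) gives $d_H^2(\cdot,\Omega)\in W^{2,p}(X,\mu)$ for all $p>1$. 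Summing, $V_\alpha\in W^{2,t}(X,\mu)$.

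For (5) I would split according to Proposition \ref{caratterizzation distanza zero}. If $x\in\Omega$ then $d_H(x,\Omega)=0$, so $V_\alpha(x)=U_\alpha(x)\nearrow U(x)$ by Proposition \ref{proprieta MY}\eqref{convergenza MY}. If $x\notin\Omega$ there are two cases: if $x\notin D(\Omega)$ then $d_H(x,\Omega)=+\infty$ and $V_\alpha(x)=+\infty$ for every $\alpha$; if $x\in D(\Omega)\setminus\Omega$ then $d_H(x,\Omega)>0$ by Proposition \ref{caratterizzation distanza zero}, so $\frac{1}{2\alpha}d_H^2(x,\Omega)\to+\infty$, while the monotonicity in Proposition \ref{proprieta MY}\eqref{convergenza MY} bounds $U_\alpha(x)\geq U_1(x)>-\infty$ for $\alpha\in(0,1]$; hence $V_\alpha(x)\to+\infty$.

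The step I expect to be most delicate is (3): because $V_\alpha$ is unbounded, $e^{-V_\alpha}$ is not $H$-Lipschitz, so Theorem \ref{teorema 5.11.2} cannot be applied to it directly. The decisive point is the factorization $e^{-V_\alpha}=e^{-U_\alpha}g$, which confines the unboundedness to the already-controlled factor $e^{-U_\alpha}$ and leaves a bounded factor $g$ with bounded $H$-gradient. Some care is also needed to justify the product rule in the weighted Gaussian Sobolev setting; I would do this by approximation through smooth cylindrical functions, exactly as in the proof of the corresponding statement for $e^{-U_\alpha}$ in \cite{CF16}.
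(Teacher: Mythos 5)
Your proof is correct, and it follows the paper's overall skeleton: the same decomposition $V_\alpha=U_\alpha+\frac{1}{2\alpha}d_H^2(\cdot,\Omega)$, with Proposition \ref{Integrabilita MY} supplying the $U_\alpha$ part and Propositions \ref{punto minimo lip}, \ref{differenziabilita distanza}, \ref{convessita e continuita distanza} and \ref{caratterizzation distanza zero} the distance part; your items (1), (2) and (5) are argued exactly as in the paper (the paper phrases $H$-continuity of $U_\alpha$ via Fr\'echet differentiability of $h\mapsto U_\alpha(x+h)$, and disposes of (5) in one line citing the same two propositions you use). Where you genuinely diverge is item (3), and marginally (4). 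The paper writes $\nabla_He^{-V_\alpha}=e^{-V_\alpha}\bigl(\nabla_HU_\alpha+\frac{1}{2\alpha}\nabla_Hd_H^2(\cdot,\Omega)\bigr)$ (up to a sign), asserts via item (2) that this right-hand side is $H$-Lipschitz, and then invokes Theorem \ref{teorema 5.11.2}, citing ``the same argument'' for $V_\alpha\in W^{2,t}(X,\mu)$. You instead factorize $e^{-V_\alpha}=e^{-U_\alpha}g$ with $g=e^{-d_H^2(\cdot,\Omega)/(2\alpha)}$, verify that $g$ is bounded with bounded $H$-gradient (the explicit bound $\sup_{s\geq 0}\frac{s}{\alpha}e^{-s^2/(2\alpha)}=(\alpha e)^{-1/2}$), hence genuinely $H$-Lipschitz, apply Theorem \ref{teorema 5.11.2} to $g$ alone, and conclude by the product rule together with $e^{-U_\alpha}\in W^{1,p}(X,\mu)$ from Proposition \ref{Integrabilita MY}. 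This buys robustness precisely at the paper's weakest point: as you observe, $e^{-V_\alpha}$ itself is not $H$-Lipschitz when $U$ (hence $U_\alpha$) is unbounded below, and by the same token the $H$-Lipschitzianity of the product $e^{-V_\alpha}\nabla_HV_\alpha$ does not follow from item (2) alone, since one factor is unbounded; your factorization confines all the unboundedness to $e^{-U_\alpha}$, which is exactly the object already controlled in \cite{CF16}, so Theorem \ref{teorema 5.11.2} is only applied where its hypothesis demonstrably holds. The price is the product rule in the Gaussian Sobolev setting, which you rightly flag and which is standard to justify by cylindrical approximation because $g$ is bounded with bounded gradient. Your item (4) — $m(\cdot,\Omega)\in W^{1,p}(X,\mu;H)$ by Theorem \ref{teorema 5.11.2}, hence $d_H^2(\cdot,\Omega)\in W^{2,p}(X,\mu)$ for every $p>1$, summed with $U_\alpha\in W^{2,t}(X,\mu)$ — is the paper's one-line argument made explicit, summand by summand, and is the cleaner way to state it.
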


\begin{proof}
Proposition \ref{convessita e continuita distanza} says that $d^2(\cdot,\Omega)$ is convex and $H$-continuous, while from Proposition \ref{Integrabilita MY} we get convexity of $U_\alpha$. By Proposition \ref{proprieta MY}\eqref{differenziabilita MY} we get that the function $\Upsilon_x:H\ra \R\cup\set{+\infty}$ defined as $\Upsilon_x(h):=U_\alpha(x+h)$ is Fr\'echet differentiable for every $x\in\dom(U)$. By Hypothesis \ref{ipotesi peso} we have $\mu(\dom(U))=1$, so $U_\alpha$ is $H$-continuous. Therefore $V_\alpha$ is convex and $H$-continuous.

By Proposition \ref{punto minimo lip}, Proposition \ref{differenziabilita distanza} we get that $d^2_H(\cdot,\Omega)$ is differentiable along $H$ at every point $x\in D(\Omega)$ with $H$-Lipschitz gradient. Proposition \ref{Integrabilita MY} says that $U_\alpha$ is differentiable along $H$ at every point $x\in X$ with $H$-Lipschitz gradient. Then $V_\alpha$ is differentiable along $H$ at every point $x\in D(\Omega)$ with $H$-Lipschitz gradient.

Since
\begin{gather*}
\int_Xe^{-V_\alpha(x)}d\mu(x)\leq \int_Xe^{-U_\alpha(x)}d\mu(x),
\end{gather*}
for every $\alpha\in(0,1]$ and $\mu$-a.e. $x\in X$, applying Proposition \ref{Integrabilita MY} we get that $e^{-V_\alpha}\in \elle^p(X,\mu)$, for every $p\geq 1$. For every $x\in D(\Omega)$
\begin{gather}\label{gradiente e alla -Valpha}
\nabla_H e^{-V_\alpha(x)}=e^{-V_\alpha(x)}\pa{\nabla_H U_\alpha(x)+\frac{1}{2\alpha}\nabla_H(d^2_H(\cdot,\Omega)(x))}.
\end{gather}
By point \eqref{Valpha gradiente Hlip} the right side of equality \eqref{gradiente e alla -Valpha} is $H$-Lipschitz. By Theorem \ref{teorema 5.11.2} we get $e^{-V_\alpha}\in W^{1,p}(X,\mu)$, for every $p\geq 1$. Using the same argument we get $V_\alpha\in W^{2,t}(X,\mu)$, where $t$ is given by Hypothesis \ref{ipotesi peso}, for every $\alpha\in(0,1]$.

Finally equality \eqref{convergenza V alpha} follows from Proposition \ref{caratterizzation distanza zero} and Proposition \ref{proprieta MY}\eqref{convergenza MY}.
\end{proof}

By Proposition \ref{check proprieta V alpha} we can apply \cite[Theorem 5.10]{CF16} to problem \eqref{problema con alpha} and get the following maximal Sobolev regularity result.

\begin{thm}\label{Stime per lip}
Assume Hypothesis \ref{ipotesi peso} holds and let $\alpha\in(0,1]$, $\lambda>0$ and $f\in\elle^2(X,\nu_\alpha)$. Equation \eqref{problema con alpha} has a unique weak solution $u_\alpha$. Moreover $u_\alpha\in W^{2,2}(X,\nu_\alpha)$ and
\begin{gather}
\label{1 stime max per lip}\norm{u_\alpha}_{\elle^2(X,\nu_\alpha)}\leq\frac{1}{\lambda}\norm{f}_{\elle^2(X,\nu_\alpha)};\qquad \norm{\nabla_H u_\alpha}_{\elle^2(X,\nu_\alpha;H)}\leq\frac{1}{\sqrt{\lambda}}\norm{f}_{\elle^2(X,\nu_\alpha)};\\
\label{2 stime max per lip}\norm{\nabla_H^2 u_\alpha}_{\elle^2(X,\nu_\alpha;\mathcal{H}_2)}\leq \sqrt{2}\norm{f}_{\elle^2(X,\nu_\alpha)}.
\end{gather}
In addition, for every $\alpha\in(0,1]$, there exists a sequence $\{u_{\alpha}^{(n)}\}_{n\in\N}\subseteq \fcon^3_b(X)$ such that $u_{\alpha}^{(n)}$ converges to $u_\alpha$ in $W^{2,2}(X,\nu_\alpha)$ and $\lambda u_{\alpha}^{(n)}-L_{\nu_\alpha} u_{\alpha}^{(n)}$ converges to $f$ in $\elle^2(X,\nu_\alpha)$.
\end{thm}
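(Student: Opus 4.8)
The plan is to reduce the entire statement to \cite[Theorem 5.10]{CF16}, which establishes precisely this kind of maximal Sobolev regularity for the elliptic problem on the \emph{whole} space $X$ with a weighted Gaussian measure: existence and uniqueness of the weak solution, membership in $W^{2,2}$, the three a priori estimates, and the approximation by $\fcon^3_b(X)$ functions. The whole content of Theorem \ref{Stime per lip} should then follow by invoking that result with the penalized weight $V_\alpha$ in place of $U$ and with $\nu_\alpha=e^{-V_\alpha}\mu$ as the relevant measure. There is no need to re-run the penalization/a-priori-estimate machinery here; problem \eqref{problema con alpha} is, after all, a whole-space problem, and that case is exactly what was treated in the earlier work.

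The only thing I would actually have to verify is that $V_\alpha$ is an admissible weight, i.e.\ that it meets the hypotheses under which \cite[Theorem 5.10]{CF16} was proved. This is precisely the role of Proposition \ref{check proprieta V alpha}: it records that $V_\alpha$ is proper, convex and $H$-continuous, that it is differentiable along $H$ at $\mu$-a.e.\ point (namely on $D(\Omega)$, which has full measure by Lemma \ref{lemma translation invaritant}) with $H$-Lipschitz gradient, that $e^{-V_\alpha}\in W^{1,p}(X,\mu)$ for every $p\geq 1$, and that $V_\alpha\in W^{2,t}(X,\mu)$ with the same $t>3$ as in Hypothesis \ref{ipotesi peso}. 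In particular $V_\alpha$ itself satisfies Hypothesis \ref{ipotesi peso}, so the spaces $W^{1,2}(X,\nu_\alpha)$ and $W^{2,2}(X,\nu_\alpha)$ and the operator $L_{\nu_\alpha}$ are well defined and the quoted theorem applies to \eqref{problema con alpha} verbatim.

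The genuine obstacle — and the reason this is not a completely trivial citation — is that $V_\alpha$ is only $H$-continuous, not norm-continuous, since both $U_\alpha$ (a Moreau--Yosida approximation along $H$) and $d_H^2(\cdot,\Omega)$ are in general only $H$-continuous. The hypothesis originally imposed in \cite{CF16} on the weight required norm-continuity, so the earlier theorem cannot be quoted literally. As the introduction explains, the relaxation from continuity to lower semicontinuity in Hypothesis \ref{ipotesi peso} is exactly what disposes of this difficulty: on re-examining the proof in \cite{CF16} one sees that norm-continuity of the weight is never used, only its lower semicontinuity together with the Sobolev integrability of $e^{-V_\alpha}$ and of $V_\alpha$, all of which are furnished by Proposition \ref{check proprieta V alpha}. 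Granting this, the existence, uniqueness, $W^{2,2}$-regularity, the estimates \eqref{1 stime max per lip}--\eqref{2 stime max per lip}, and the smooth approximation all transfer directly, completing the argument.
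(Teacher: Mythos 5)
Your proposal is correct and takes essentially the same route as the paper: the paper's entire proof of Theorem \ref{Stime per lip} consists of the sentence preceding its statement, namely that Proposition \ref{check proprieta V alpha} allows one to apply \cite[Theorem 5.10]{CF16} to problem \eqref{problema con alpha} with the penalized weight $V_\alpha$ and measure $\nu_\alpha$. Your extra discussion of why the citation is legitimate even though $V_\alpha$ is only $H$-continuous --- that the norm-continuity in \cite[Hypothesis 1.1]{CF16} can be relaxed to lower semicontinuity --- is precisely the point the paper itself makes in the introduction when it weakens that assumption in Hypothesis \ref{ipotesi peso}.
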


Now we have all the ingredients necessary to prove Theorem \ref{Main theorem 1}. The proof is similar to the one in \cite[Section 3]{DPL15}.

\begin{proof}[Proof of Theorem \ref{Main theorem 1}]
Let $f\in\fcon^\infty_b(X)$. By Theorem \ref{Stime per lip} we get that, for every $\alpha\in(0,1]$, equation \eqref{problema con alpha} has a unique weak solution $u_\alpha\in W^{2,2}(X,\nu_\alpha)$ such that inequalities \eqref{1 stime max per lip} and inequality \eqref{2 stime max per lip} hold. Moreover for every $\varphi\in\fcon^\infty_b(X)$ we have
\begin{gather*}
\lambda \int_X u_\alpha\varphi d\nu_\alpha+\int_X\gen{\nabla_H u_\alpha,\nabla_H \varphi}_Hd\nu_\alpha=\int_Xf\varphi d\nu_\alpha.
\end{gather*}
By Proposition \ref{caratterizzation distanza zero} and Proposition \ref{proprieta MY}\eqref{convergenza MY} we have
\begin{gather}\label{disuguaglianza e alla -U e alla -Valpha}
e^{-U(x)}\leq e^{-U_\alpha(x)}= e^{-V_\alpha(x)}\qquad x\in \Omega.
\end{gather}
So we get the inclusion $W^{2,2}(\Omega,\nu_\alpha)\subseteq W^{2,2}(\Omega,\nu)$ for every $\alpha\in(0,1]$.

Let $\set{\alpha_n}_{n\in\N}$ be a sequence converging to zero such that $0< \alpha_n\leq 1$ for every $n\in\N$. By inequalities \eqref{1 stime max per lip} and inequality \eqref{2 stime max per lip} the set $\set{u_{\alpha_n}\tc n\in\N}$ is a bounded set in $W^{2,2}(\Omega,\nu)$. By weak compactness a subsequence, that we will still denote by $\set{\alpha_n}_{n\in\N}$, exists such that $u_{\alpha_n}$ weakly converges to an element $u\in W^{2,2}(\Omega,\nu)$. Without loss of generality we can assume that $u_{\alpha_n}$, $\nabla_H u_{\alpha_n}$ and $\nabla_H^2 u_{\alpha_n}$ converge pointwise $\mu$-a.e. respectively to $u$, $\nabla_H u$ and $\nabla_H^2 u$.

By inequalities \eqref{1 stime max per lip}, for every $n\in\N$, we have
\begin{gather}\label{stima per lebesgue in max reg 1}
\begin{array}{c}
\displaystyle \int_Xu_{\alpha_n}\varphi e^{-V_{\alpha_n}}d\mu\leq\pa{\int_X u_{\alpha_n}^2 e^{-V_{\alpha_n}}d\mu}^{\frac{1}{2}}\pa{\int_X \varphi^2 e^{-V_{\alpha_n}}d\mu}^{\frac{1}{2}}\leq\\
\displaystyle
\leq\frac{\norm{\varphi}_\infty}{\lambda}\pa{\int_X f^2 e^{-V_{\alpha_n}}d\mu}^{\frac{1}{2}}\pa{\int_X e^{-V_{\alpha_n}}d\mu}^{\frac{1}{2}}\leq \\
\displaystyle \leq \frac{\norm{f}_\infty\norm{\varphi}_\infty}{\lambda}\int_X e^{-V_{\alpha_n}}d\mu\leq\frac{\norm{f}_\infty\norm{\varphi}_\infty}{\lambda}\int_X e^{-V_1}d\mu.
\end{array}
\end{gather}
By inequality \eqref{stima per lebesgue in max reg 1}, Proposition \ref{check proprieta V alpha}\eqref{convergenza V alpha} and the Lebesgue dominated convergence theorem we get
\begin{gather}\label{convergenza con lebesgue in max reg 1}
\lim_{n\ra+\infty}\lambda\int_X u_{\alpha_n}\varphi d\nu_{\alpha_n}=\lambda\int_\Omega u\varphi d\nu.
\end{gather}

By inequalities \eqref{1 stime max per lip}, for every $n\in\N$, we have
\begin{gather}\label{stima per lebesgue in max reg 2}
\begin{array}{c}
\displaystyle \int_X\gen{\nabla_H u_{\alpha_n},\nabla_H \varphi}_H e^{-V_{\alpha_n}}d\mu\leq \int_X\abs{\nabla_H u_{\alpha_n}}_H\abs{\nabla_H \varphi}_H e^{-V_{\alpha_n}}d\mu\\
\displaystyle\leq \pa{\int_X\abs{\nabla_H u_{\alpha_n}}^2_H e^{-V_{\alpha_n}}d\mu}^{\frac{1}{2}}\pa{\int_X\abs{\nabla_H \varphi}^2_H e^{-V_{\alpha_n}}d\mu}^{\frac{1}{2}}\leq\\
\displaystyle\leq \frac{\norm{f}_\infty\norm{\abs{\nabla_H\varphi}_H}_\infty}{\sqrt{\lambda}}\int_X e^{-V_{\alpha_n}}d\mu\leq  \frac{\norm{f}_\infty\norm{\abs{\nabla_H\varphi}_H}_\infty}{\sqrt{\lambda}}\int_X e^{-V_{1}}d\mu.
\end{array}
\end{gather}
By inequality \eqref{stima per lebesgue in max reg 2}, Proposition \ref{check proprieta V alpha}\eqref{convergenza V alpha} and the Lebesgue dominated convergence theorem we get
\begin{gather}\label{convergenza con lebesgue in max reg 2}
\lim_{n\ra+\infty}\int_X\gen{\nabla_H u_{\alpha_n},\nabla_H \varphi}_H d\nu_{\alpha_n}=\int_\Omega\gen{\nabla_H u,\nabla_H \varphi}_H d\nu.
\end{gather}

Finally we have
\begin{gather}\label{stima per lebesgue in max reg 3}
\int_Xf\varphi e^{-V_{\alpha_n}}d\mu\leq \norm{f}_\infty\norm{\varphi}_\infty\int_Xe^{-V_{\alpha_n}}d\mu\leq \norm{f}_\infty\norm{\varphi}_\infty\int_Xe^{-V_1}d\mu
\end{gather}
By inequality \eqref{stima per lebesgue in max reg 3}, Proposition \ref{check proprieta V alpha}\eqref{convergenza V alpha} and the Lebesgue dominated convergence theorem we get
\begin{gather}\label{convergenza con lebesgue in max reg 3}
\lim_{n\ra+\infty}\int_Xf\varphi d\nu_{\alpha_n}=\int_\Omega f\varphi d\nu.
\end{gather}

Inequality \eqref{convergenza con lebesgue in max reg 1}, inequality \eqref{convergenza con lebesgue in max reg 2} and inequality \eqref{convergenza con lebesgue in max reg 3} give us that $u$ is a weak solution of equation \eqref{Problema}, i.e. for every $\varphi\in\fcon^\infty_b(X)$
\[\lambda\int_\Omega u\varphi d\nu+\int_\Omega\gen{\nabla_H u,\nabla_H \varphi}_H d\nu=\int_\Omega f\varphi d\nu.\]

By the lower semicontinuity of the norm of $\elle^2(\Omega,\mu)$ and $\elle^2(\Omega,\mu;H)$, inequalities \eqref{1 stime max per lip}, inequality \eqref{disuguaglianza e alla -U e alla -Valpha}, inequality \eqref{stima per lebesgue in max reg 3} and the Lebesgue dominated convergence theorem we get
\begin{gather*}
\norm{u}_{\elle^2(\Omega,\nu)}\leq\liminf_{n\ra+\infty}\norm{u_{\alpha_n}}_{\elle^2(\Omega,\nu)}\leq\liminf_{n\ra+\infty}\norm{u_{\alpha_n}}_{\elle^2(\Omega,{\nu_{\alpha_n}})}\leq\\
\leq \liminf_{n\ra+\infty}\norm{u_{\alpha_n}}_{\elle^2(X,\nu_{\alpha_n})}\leq\frac{1}{\lambda}\liminf_{n\ra+\infty}\norm{f}_{\elle^2(X,\nu_{\alpha_n})}=\frac{1}{\lambda}\norm{f}_{\elle^2(\Omega,\nu)};\\
\intertext{and}
\norm{\nabla_H u}_{\elle^2(\Omega,\nu;H)}\leq\liminf_{n\ra+\infty}\norm{\nabla_H u_{\alpha_n}}_{\elle^2(\Omega,\nu;H)}\leq\liminf_{n\ra+\infty}\norm{\nabla_H u_{\alpha_n}}_{\elle^2(\Omega,{\nu_{\alpha_n}};H)}\leq\\
\leq \liminf_{n\ra+\infty}\norm{\nabla_H u_{\alpha_n}}_{\elle^2(X,\nu_{\alpha_n};H)}\leq\frac{1}{\sqrt{\lambda}}\liminf_{n\ra+\infty}\norm{f}_{\elle^2(X,\nu_{\alpha_n};H)}=\frac{1}{\sqrt{\lambda}}\norm{f}_{\elle^2(\Omega,\nu;H)}.
\end{gather*}
In the same way by the lower semicontinuity of the norm of $\elle^2(\Omega,\mu;\mathcal{H}_2)$, inequality \eqref{2 stime max per lip}, inequality \eqref{disuguaglianza e alla -U e alla -Valpha}, inequality \eqref{stima per lebesgue in max reg 3} and the Lebesgue dominated convergence theorem we get
\begin{gather*}
\norm{\nabla^2_H u}_{\elle^2(\Omega,\nu;\mathcal{H}_2)}\leq\liminf_{n\ra+\infty}\norm{\nabla^2_H u_{\alpha_n}}_{\elle^2(\Omega,\nu;\mathcal{H}_2)}\leq\liminf_{n\ra+\infty}\norm{\nabla^2_H u_{\alpha_n}}_{\elle^2(\Omega,{\nu_{\alpha_n}};\mathcal{H}_2)}\leq\\
\leq \liminf_{n\ra+\infty}\norm{\nabla^2_H u_{\alpha_n}}_{\elle^2(X,\nu_{\alpha_n};\mathcal{H}_2)}\leq\sqrt{2}\liminf_{n\ra+\infty}\norm{f}_{\elle^2(X,\nu_{\alpha_n};\mathcal{H}_2)}=\sqrt{2}\norm{f}_{\elle^2(\Omega,\nu;\mathcal{H}_2)}.
\end{gather*}
If $f\in\elle^2(\Omega,\nu)$, a standard density argument gives us the assertions of our theorem.
\end{proof}

\section{The Neumann condition}\label{The Neumann condition}

We are now interested in proving Theorem \ref{inclusione dominio}. As in Section \ref{Sobolev regularity estimates} we approach the problem in $\Omega$ by penalized problems in the whole space $X$, replacing $U$ by the functions $V_\alpha$ defined via equation \eqref{Problema penalizzato}.

We start by proving a technical lemma that we will use in the proof of Theorem \ref{inclusione dominio}.

\begin{lemma}\label{lemma tecnico}
Assume Hypotheses \ref{ipotesi dominio} and \ref{ipotesi peso} hold and let $\alpha\in(0,1]$. Let $f\in\fcon^\infty_b(X)$ and let $u_\alpha$ be a weak solution of equation \eqref{problema con alpha}. For every $\varphi\in\fcon_b^\infty(X)$ the function
\[F_\alpha(x):=\varphi(x)\gen{\nabla_Hu_\alpha(x),\frac{\nabla_H G(x)}{\abs{\nabla_H G(x)}_H}}_He^{-V_\alpha(x)}\]
belongs to $W^{1,r}(\Omega,\mu)$ for every $1< r<2$. Furthermore we have $\trace F_\alpha\in \elle^q(G^{-1}(0),\rho)$ for every $1< q<2$ and
\[\lim_{\alpha\ra 0^+} \trace F_\alpha=\varphi\gen{\trace(\nabla_H u),\trace\pa{\frac{\nabla_H G}{\abs{\nabla_H G}_H}}}_H e^{-U},\]
where the limit is taken in $\elle^q(G^{-1}(0),\rho)$, for every $1< q<2$.
\end{lemma}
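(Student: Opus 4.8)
The plan is to view $F_\alpha$ as the product of the three factors $\varphi$, the scalar field $w_\alpha:=\gen{\nabla_Hu_\alpha,\nabla_HG/\abs{\nabla_HG}_H}_H$ and the weight $e^{-V_\alpha}$, to establish a bound for $F_\alpha$ in $W^{1,r}(\Omega,\mu)$ that is uniform in $\alpha$, and then to let the trace operator do the rest. Differentiating along $H$ by the product rule, $\nabla_HF_\alpha$ is a sum of terms of three types: one carrying $\nabla_H\varphi$, one carrying the $H$-derivative of $w_\alpha$ (which itself splits into a piece containing $\nabla_H^2u_\alpha$ and a piece containing $\nabla_H\!\left(\nabla_HG/\abs{\nabla_HG}_H\right)$), and one carrying $\nabla_He^{-V_\alpha}=-e^{-V_\alpha}\nabla_HV_\alpha$. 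The integrability of the first two groups is routine: by Theorem \ref{Stime per lip} the fields $\nabla_Hu_\alpha$ and $\nabla_H^2u_\alpha$ are bounded in $\elle^2(X,\nu_\alpha)$ uniformly in $\alpha$ (recall $V_\alpha\geq V_1$ for $\alpha\leq1$, so that $\norm{f}_{\elle^2(X,\nu_\alpha)}^2\leq\norm{f}_\infty^2\int_Xe^{-V_1}d\mu$), while Hypothesis \ref{ipotesi dominio} puts $\nabla_H^2G$ and $\abs{\nabla_HG}_H^{-1}$ in $\elle^q(\Omega,\mu)$ for every $q$; Hölder's inequality, using $r<2$ to distribute powers of $e^{-V_\alpha}$ (which is dominated by $e^{-V_1}\in\elle^p(X,\mu)$ for all $p$), then places these terms in $\elle^r(\Omega,\mu)$.

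The delicate group is the last one, $-\varphi\,w_\alpha\,e^{-V_\alpha}\nabla_HV_\alpha$, because $\nabla_HV_\alpha=\nabla_HU_\alpha+\tfrac1{2\alpha}\nabla_Hd_H^2(\cdot,\Omega)$ carries the factor $1/\alpha$, which diverges as $\alpha\to0^+$. Here the restriction to $\Omega$ is decisive: by Proposition \ref{differenziabilita distanza} one has $\tfrac1{2\alpha}\nabla_Hd_H^2(\cdot,\Omega)=\tfrac1\alpha m(\cdot,\Omega)$, and since every $x\in\Omega$ has $d_H(x,\Omega)=0$ (Proposition \ref{caratterizzation distanza zero}) the minimiser satisfies $m(x,\Omega)=0$; hence $\nabla_HV_\alpha=\nabla_HU_\alpha$ for $\mu$-a.e.\ $x\in\Omega$, and the singular penalty disappears. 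On $\Omega$ we may then invoke the $\alpha$-independent pointwise bound $\abs{\nabla_HU_\alpha}_H\leq\abs{\nabla_HU}_H$ of Proposition \ref{convergenza gradiente MY}, together with $\nabla_HU\in\elle^t(X,\mu;H)$ and $t>3$, to control this term in $\elle^r(\Omega,\mu)$ as well. This yields $F_\alpha\in W^{1,r}(\Omega,\mu)$ with a bound uniform in $\alpha$ for every $1<r<2$, and, by the $U\equiv0$ case of Proposition \ref{trace continuity}, $\trace F_\alpha\in\elle^q(G^{-1}(0),\rho)$ for every $q\in[1,r)$, hence for every $1<q<2$.

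To obtain the limit I would upgrade these uniform bounds to strong convergence $F_\alpha\to F_0:=\varphi\gen{\nabla_Hu,\nabla_HG/\abs{\nabla_HG}_H}_He^{-U}$ in $W^{1,r}(\Omega,\mu)$ and conclude once more by trace continuity. Along the subsequence fixed in the proof of Theorem \ref{Main theorem 1} the fields $u_{\alpha_n}$, $\nabla_Hu_{\alpha_n}$ and $\nabla_H^2u_{\alpha_n}$ converge $\mu$-a.e.; passing if necessary to a further subsequence, $\nabla_HU_{\alpha_n}\to\nabla_HU$ $\mu$-a.e.\ (from the $\elle^p$-convergence in Proposition \ref{convergenza gradiente MY}); and on $\Omega$ one has $e^{-V_{\alpha_n}}=e^{-U_{\alpha_n}}\searrow e^{-U}$. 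Thus every factor, hence every term of $F_\alpha$ and of $\nabla_HF_\alpha$, converges pointwise $\mu$-a.e.\ to the corresponding term of $F_0$ and $\nabla_HF_0$. The same estimates with an exponent $r'\in(r,2)$ in place of $r$, supplemented where necessary by the $\alpha$-independent domination $e^{-V_{\alpha_n}}\leq e^{-V_1}$, provide the uniform integrability required by Vitali's theorem, so the convergence takes place strongly in $\elle^r(\Omega,\mu)$; closedness of the gradient operator $\nabla_H$ in $\elle^r(\Omega,\mu)$ then identifies the limit and gives $F_\alpha\to F_0$ in $W^{1,r}(\Omega,\mu)$.

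Finally, the trace operator being continuous from $W^{1,r}(\Omega,\mu)$ to $\elle^q(G^{-1}(0),\rho)$ for $q\in[1,r)$ (Proposition \ref{trace continuity}), $\trace F_\alpha\to\trace F_0$ in $\elle^q(G^{-1}(0),\rho)$, and choosing $r\in(q,2)$ covers every $1<q<2$. It remains to identify $\trace F_0$ with $\varphi\gen{\trace(\nabla_Hu),\trace(\nabla_HG/\abs{\nabla_HG}_H)}_He^{-U}$: this follows from the multiplicativity of the trace on the product of the continuous cylindrical factor $\varphi$, the $H$-valued Sobolev fields, and the weight, together with $\trace e^{-U}=e^{-U}$ on $G^{-1}(0)$ and the componentwise definition $\trace\Psi=\sum_n(\trace\psi_n)e_n$. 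The main obstacle in the whole argument is precisely the apparent $1/\alpha$ blow-up in $\nabla_He^{-V_\alpha}$; the observation that $m(\cdot,\Omega)$ vanishes on $\Omega$, collapsing the weight gradient to the uniformly controlled $\nabla_HU_\alpha$, is what makes both the uniform $W^{1,r}$ estimate and the passage to the limit possible.
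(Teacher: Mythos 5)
Your proposal is correct and follows essentially the same route as the paper's proof: the same product-rule decomposition of $\nabla_H F_\alpha$, the same H\"older estimates against $e^{-V_1}$ combined with the uniform bounds of Theorem \ref{Stime per lip}, the same treatment of the dangerous $\nabla_H V_\alpha$ term via the pointwise bound $\abs{\nabla_H U_\alpha}_H\leq\abs{\nabla_H U}_H$ on $\Omega$, and continuity of the trace operator to pass to the limit. The only differences are presentational: you make explicit (through $m(\cdot,\Omega)=0$ on $\Omega$, hence $\nabla_H V_\alpha=\nabla_H U_\alpha$ there) the cancellation of the penalty gradient that the paper invokes only implicitly when citing Proposition \ref{convergenza gradiente MY}, and you implement the final convergence via Vitali's theorem and closedness of $\nabla_H$ where the paper appeals to dominated convergence.
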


\begin{proof}
We start by proving that $F_\alpha\in \elle^r(\Omega,\mu)$ for every $1< r<2$.
\begin{gather*}
\int_\Omega\abs{F_\alpha}^rd\mu=\int_{\Omega}\abs{\varphi\gen{\nabla_Hu_\alpha,\nabla_H G}_H\frac{e^{-V_\alpha}}{\abs{\nabla_H G}_H}}^{r}d\mu\leq\norm{\varphi}_\infty\int_\Omega\abs{\nabla_H u_\alpha}_H^{r}e^{-rV_\alpha}d\mu=\\
=\norm{\varphi}_\infty\int_\Omega\abs{\nabla_H u_\alpha}_H^{r}e^{-(r-1)V_\alpha}e^{-V_\alpha}d\mu.
\end{gather*}
By using the H\"older inequality with $2/r$, for the measure $e^{-V_\alpha}\mu$, we get
\begin{gather}\label{boh 1}
\int_\Omega\abs{F_\alpha}^rd\mu\leq \norm{\varphi}_\infty\pa{\int_\Omega\abs{\nabla_H u_\alpha}_H^2 d\nu_\alpha}^{\frac{r}{2}}\pa{\int_\Omega e^{-\frac{r}{2-r}V_\alpha}d\mu}^{\frac{2-r}{2}}.
\end{gather}
By Proposition \ref{check proprieta V alpha}, Theorem \ref{Stime per lip} and the fact that $r/(2-r)>1$ we have
\begin{gather}\label{boh 2}
\begin{array}{c}
\displaystyle\int_\Omega\abs{F_\alpha}^rd\mu\leq \frac{\norm{\varphi}^r_\infty}{\lambda^{\frac{r}{2}}}\pa{\int_\Omega f^2e^{-V_\alpha} d\mu}^{\frac{r}{2}}\pa{\int_\Omega e^{-\frac{r}{2-r}V_\alpha}d\mu}^{\frac{2-r}{2}}\leq\\
\displaystyle\leq \frac{\norm{f}_\infty^r\norm{\varphi}^r_\infty}{\lambda^{\frac{r}{2}}}\pa{\int_\Omega e^{-V_\alpha} d\mu}^{\frac{r}{2}}\pa{\int_\Omega e^{-\frac{r}{2-r}V_\alpha}d\mu}^{\frac{2-r}{2}}\leq\\
\displaystyle\leq \frac{\norm{f}_\infty^r\norm{\varphi}^r_\infty}{\lambda^{\frac{r}{2}}}\pa{\int_X e^{-V_1} d\mu}^{\frac{r}{2}}\pa{\int_X e^{-\frac{r}{2-r}V_1}d\mu}^{\frac{2-r}{2}}.
\end{array}
\end{gather}
So $F_\alpha\in \elle^r(\Omega,\mu)$ for every $1< r<2$.

Now we want to prove that $\nabla_H F_\alpha\in\elle^r(\Omega,\mu;H)$, for every $1< r<2$. Observe that $\nabla_H F_\alpha$ exists by Hypotheses \ref{ipotesi dominio} and \ref{ipotesi peso} and
\begin{gather}\label{gradiente Falpha}
\begin{array}{c}
\displaystyle\nabla_H F_\alpha=\gen{\nabla_H u_\alpha,\nabla_H G}_H\frac{e^{-V_\alpha}}{\abs{\nabla_H G}_H}\nabla_H\varphi+\varphi\frac{e^{-V_\alpha}}{\abs{\nabla_H G}_H}\nabla_H^2 u_\alpha\nabla_H G+\\
\displaystyle+\varphi\frac{e^{-V_\alpha}}{\abs{\nabla_H G}_H}\nabla_H^2 G\nabla_H u_\alpha-\varphi\frac{\gen{\nabla_H u_\alpha,\nabla_H G}_H}{\abs{\nabla_H G}^3_H}e^{-V_\alpha}\nabla_H^2 G\nabla_H G+\\
\displaystyle-\varphi\frac{\gen{\nabla_H u_\alpha,\nabla_H G}_H}{\abs{\nabla_H G}_H}e^{-V_\alpha}\nabla_H V_\alpha.
\end{array}
\end{gather}
We will estimate each addend. We have
\begin{gather*}
\int_\Omega \abs{\gen{\nabla_H u_\alpha,\nabla_H G}_H\frac{e^{-V_\alpha}}{\abs{\nabla_H G}_H}\nabla_H\varphi}_H^rd\mu\leq \norm{\abs{\nabla_H\varphi}_H}_\infty^r\int_\Omega\abs{\nabla_H u_\alpha}_H^r e^{-rV_\alpha}d\mu.
\end{gather*}
Repeating the same arguments as in inequality \eqref{boh 1} and inequality \eqref{boh 2} we get
\begin{gather}\label{addendo 1}
\int_\Omega \abs{\gen{\nabla_H u_\alpha,\nabla_H G}_H\frac{e^{-V_\alpha}}{\abs{\nabla_H G}_H}\nabla_H\varphi}_H^rd\mu\leq \frac{\norm{f}_\infty^r\norm{\abs{\nabla_H\varphi}_H}_\infty^r}{\lambda^{\frac{r}{2}}}\pa{\int_X e^{-V_1} d\mu}^{\frac{r}{2}}\pa{\int_X e^{-\frac{r}{2-r}V_1}d\mu}^{\frac{2-r}{2}}.
\end{gather}
Recalling Theorem \ref{Stime per lip} and repeating some arguments used in inequality \eqref{boh 1} and in inequality \eqref{boh 2} we have
\begin{gather}\label{addendo 2}
\displaystyle\begin{array}{c}
\displaystyle\int_\Omega\abs{\varphi\frac{e^{-V_\alpha}}{\abs{\nabla_H G}_H}\nabla_H^2 u_\alpha\nabla_H G}_H^rd\mu\leq
\displaystyle\norm{\varphi}_\infty^r\int_\Omega\norm{\nabla_H^2 u_\alpha}_{\mathcal{H}_2}^re^{-rV_\alpha}d\mu\leq \\
\displaystyle\leq \norm{\varphi}^r_\infty\pa{\int_\Omega\norm{\nabla^2_H u_\alpha}_{\mathcal{H}_2}^2 d\nu_\alpha}^{\frac{r}{2}}\pa{\int_\Omega e^{-\frac{r}{2-r}V_\alpha}d\mu}^{\frac{2-r}{2}}\leq\\
\displaystyle\leq 2^{\frac{r}{2}} \norm{\varphi}^r_\infty\pa{\int_\Omega f^2 d\nu_\alpha}^{\frac{r}{2}}\pa{\int_\Omega e^{-\frac{r}{2-r}V_\alpha}d\mu}^{\frac{2-r}{2}}\leq\\
\displaystyle\leq 2^{\frac{r}{2}} \norm{\varphi}^r_\infty\norm{f}^r_\infty \pa{\int_\Omega e^{-V_\alpha} d\mu}^{\frac{r}{2}}\pa{\int_\Omega e^{-\frac{r}{2-r}V_\alpha}d\mu}^{\frac{2-r}{2}}\leq \\
\displaystyle\leq 2^{\frac{r}{2}} \norm{\varphi}^r_\infty\norm{f}^r_\infty \pa{\int_X e^{-V_1} d\mu}^{\frac{r}{2}}\pa{\int_X e^{-\frac{r}{2-r}V_1}d\mu}^{\frac{2-r}{2}}.
\end{array}
\end{gather}
Now we integrate the third addend of equality \eqref{gradiente Falpha},
\begin{gather*}
\int_\Omega \abs{\varphi\frac{e^{-V_\alpha}}{\abs{\nabla_H G}_H}\nabla_H^2 G\nabla_H u_\alpha}_H^rd\mu\leq  \norm{\varphi}_\infty^r\int_\Omega \pa{\frac{e^{-V_\alpha}}{\abs{\nabla_H G}_H}\norm{\nabla_H^2 G}_{\mathcal{H}_2}\abs{\nabla_H u_\alpha}_H}^rd\mu.
\end{gather*}
Applying H\"older inequality with an exponent $\beta>1$ such that $r\beta<2$ we get
\begin{gather*}
\int_\Omega \abs{\varphi\frac{e^{-V_\alpha}}{\abs{\nabla_H G}_H}\nabla_H^2 G\nabla_H u_\alpha}_H^rd\mu\leq \norm{\varphi}_\infty^r\pa{\int_\Omega\frac{\norm{\nabla_H^2 G}^{r\beta'}_{\mathcal{H}_2}}{\abs{\nabla_H G}^{r\beta'}_H}d\mu}^{\frac{1}{\beta'}}\pa{\int_\Omega\abs{\nabla_H u_\alpha}_H^{r\beta}e^{-r\beta V_\alpha}d\mu}^{\frac{1}{\beta}}.
\end{gather*}
By Proposition \ref{check proprieta V alpha}, Theorem \ref{Stime per lip} and the fact that $r\beta/(2-r\beta)>1$ we get
\begin{gather}\label{addendo 3}
\begin{array}{c}
\displaystyle\int_\Omega \abs{\varphi\frac{e^{-V_\alpha}}{\abs{\nabla_H G}_H}\nabla_H^2 G\nabla_H u_\alpha}_H^rd\mu\leq\\
\displaystyle\leq \norm{\varphi}_\infty^r\pa{\int_\Omega\frac{\norm{\nabla_H^2 G}^{r\beta'}_{\mathcal{H}_2}}{\abs{\nabla_H G}^{r\beta'}_H}d\mu}^{\frac{1}{\beta'}}\pa{\int_\Omega\abs{\nabla_H u_\alpha}_H^2d\nu_\alpha}^{\frac{r}{2}}\pa{\int_\Omega e^{-\frac{r\beta}{2-r\beta}V_\alpha}d\mu}^{\frac{2-r\beta}{2\beta}}\leq\\
\displaystyle\leq \frac{\norm{\varphi}_\infty^r}{\lambda^{\frac{r}{2}}}\pa{\int_\Omega\frac{\norm{\nabla_H^2 G}^{r\beta'}_{\mathcal{H}_2}}{\abs{\nabla_H G}^{r\beta'}_H}d\mu}^{\frac{1}{\beta'}} \pa{\int_\Omega f^2 e^{-V_\alpha}d\mu}^{\frac{r}{2}}\pa{\int_X e^{-\frac{r\beta}{2-r\beta}V_1}d\mu}^{\frac{2-r\beta}{2\beta}}\leq\\
\displaystyle\leq \frac{\norm{f}^r_\infty\norm{\varphi}_\infty^r}{\lambda^{\frac{r}{2}}}\pa{\int_\Omega\frac{\norm{\nabla_H^2 G}^{r\beta'}_{\mathcal{H}_2}}{\abs{\nabla_H G}^{r\beta'}_H}d\mu}^{\frac{1}{\beta'}} \pa{\int_X e^{-V_1}d\mu}^{\frac{r}{2}}\pa{\int_X e^{-\frac{r\beta}{2-r\beta}V_1}d\mu}^{\frac{2-r\beta}{2\beta}}.
\end{array}
\end{gather}
Arguing as in inequality \eqref{addendo 3}, then for the fourth addend of equality \eqref{gradiente Falpha} we have
\begin{gather}\label{addendo 4}
\begin{array}{c}
\displaystyle\int_\Omega\abs{\varphi\frac{\gen{\nabla_H u_\alpha,\nabla_H G}_H}{\abs{\nabla_H G}^3_H}e^{-V_\alpha}\nabla_H^2 G\nabla_H G}_H^rd\mu\leq  \norm{\varphi}_\infty^r\int_\Omega \pa{\frac{e^{-V_\alpha}}{\abs{\nabla_H G}_H}\norm{\nabla_H^2 G}_{\mathcal{H}_2}\abs{\nabla_H u_\alpha}_H}^rd\mu\leq\\
\displaystyle\leq\frac{\norm{f}^r_\infty\norm{\varphi}_\infty^r}{\lambda^{\frac{r}{2}}}\pa{\int_\Omega\frac{\norm{\nabla_H^2 G}^{r\beta'}_{\mathcal{H}_2}}{\abs{\nabla_H G}^{r\beta'}_H}d\mu}^{\frac{1}{\beta'}} \pa{\int_X e^{-V_1}d\mu}^{\frac{r}{2}}\pa{\int_X e^{-\frac{r\beta}{2-r\beta}V_1}d\mu}^{\frac{2-r\beta}{2\beta}}.
\end{array}
\end{gather}
Let $\beta>1$ such that $r\beta<2$. For the last addend of equality \eqref{gradiente Falpha} we obtain
\begin{gather*}
\int_\Omega\abs{\varphi\frac{\gen{\nabla_H u_\alpha,\nabla_H G}_H}{\abs{\nabla_H G}_H}e^{-V_\alpha}\nabla_H V_\alpha}_H^rd\mu\leq \norm{\varphi}_\infty^r\int_\Omega\pa{\abs{\nabla_H u_\alpha}_H e^{-V_\alpha}\abs{\nabla_H V_\alpha}_H}^rd\mu\leq\\
\leq \frac{\norm{f}_\infty^r\norm{\varphi}_\infty^r}{\lambda^{\frac{r}{2}}}\pa{\int_\Omega \abs{\nabla_H V_\alpha}_H^{r\beta'}d\mu}^{\frac{1}{\beta'}}\pa{\int_\Omega\pa{\abs{\nabla_H u_\alpha}_H e^{-V_\alpha}}^{r\beta}d\mu}^{\frac{1}{\beta}}.
\end{gather*}
Proceeding as in inequality \eqref{addendo 3} and recalling that $\nabla_H V_\alpha$ is $H$-Lipschitz (see Proposition \ref{check proprieta V alpha} and Theorem \ref{teorema 5.11.2}) we have
\begin{gather*}
\begin{array}{c}
\displaystyle\int_\Omega\abs{\varphi\frac{\gen{\nabla_H u_\alpha,\nabla_H G}_H}{\abs{\nabla_H G}_H}e^{-V_\alpha}\nabla_H V_\alpha}_H^rd\mu\leq\\
\displaystyle\leq \frac{\norm{f}_\infty^r\norm{\varphi}_\infty^r}{\lambda^{\frac{r}{2}}}\pa{\int_\Omega \abs{\nabla_H V_\alpha}_H^{r\beta'}d\mu}^{\frac{1}{\beta'}}\pa{\int_X e^{-V_1}d\mu}^{\frac{r}{2}}\pa{\int_X e^{-\frac{r\beta}{2-r\beta}V_1}d\mu}^{\frac{2-r\beta}{2\beta}}.
\end{array}
\end{gather*}
Finally recalling that $\abs{V_\alpha(x)}_H\leq \abs{U(x)}_H$ for every $x\in \Omega$ (see Proposition \ref{convergenza gradiente MY}) we get
\begin{gather}\label{addendo 5}
\begin{array}{c}
\displaystyle\int_\Omega\abs{\varphi\frac{\gen{\nabla_H u_\alpha,\nabla_H G}_H}{\abs{\nabla_H G}_H}e^{-V_\alpha}\nabla_H V_\alpha}_H^rd\mu\leq\\
\displaystyle\leq \frac{\norm{f}_\infty^r\norm{\varphi}_\infty^r}{\lambda^{\frac{r}{2}}}\pa{\int_\Omega \abs{\nabla_H U}_H^{r\beta'}d\mu}^{\frac{1}{\beta'}}\pa{\int_X e^{-V_1}d\mu}^{\frac{r}{2}}\pa{\int_X e^{-\frac{r\beta}{2-r\beta}V_1}d\mu}^{\frac{2-r\beta}{2\beta}}.
\end{array}
\end{gather}
By inequalities \eqref{addendo 1}, \eqref{addendo 2}, \eqref{addendo 3}, \eqref{addendo 4} and \eqref{addendo 5} we get that $F_\alpha$ belongs to $W^{1,r}(\Omega,\mu)$, for every $1< r<2$.

Observe that the final estimate of the inequalities \eqref{addendo 1}, \eqref{addendo 2}, \eqref{addendo 3}, \eqref{addendo 4} and \eqref{addendo 5} does not depend on $\alpha$. Then by Proposition \ref{convergenza gradiente MY}, Proposition \ref{check proprieta V alpha}, the Lebesgue dominated convergence theorem and Proposition \ref{trace continuity}, we get the furthermore part of our statement.
\end{proof}

We are now able to prove that if $u$ is a weak solution of problem \eqref{Problema}, then $u$ satisfies a Neumann type condition at the boundary.

\begin{proof}[Proof of Theorem \ref{inclusione dominio}]
By Theorem \ref{Main theorem 1} we get that for every $\varphi\in\fcon_b^\infty(X)$
\begin{gather}\label{weak solution}
\lambda\int_\Omega u\varphi d\nu+\int_\Omega\gen{\nabla_H\varphi,\nabla_H u}_Hd\nu=\int_\Omega f\varphi d\nu.
\end{gather}
Thanks to Proposition \ref{check proprieta V alpha} and Theorem \ref{Stime per lip}, equation \eqref{problema con alpha} has a unique solution $u_\alpha\in W^{2,2}(X,\nu_\alpha)$, for every $\alpha\in(0,1]$, such that inequalities \eqref{1 stime max per lip} and inequality \eqref{2 stime max per lip} hold. Moreover for every $\varphi\in\fcon^\infty_b(X)$ and $\alpha\in(0,1]$ we have
\begin{gather*}
\lambda \int_X u_\alpha\varphi d\nu_\alpha+\int_X\gen{\nabla_H u_\alpha,\nabla_H \varphi}_Hd\nu_\alpha=\int_Xf\varphi d\nu_\alpha.
\end{gather*}
In addition for every $\alpha\in(0,1]$ there exists a sequence $(u_{\alpha}^{(n)})_{n\in\N}\subseteq\fcon^3_b(X)$ such that
\begin{gather}
\label{convergenza W22 unk}W^{2,2}(X,\nu_\alpha)\text{-}\lim_{n\ra+\infty} u_\alpha^{(n)}=u_\alpha;\\
\label{convergenza L2 problema con k e n}\elle^2(X,\nu_\alpha)\text{-}\lim_{n\ra+\infty} \lambda u_{\alpha}^{(n)}-L_{\nu_\alpha} u_{\alpha}^{(n)}=f.
\end{gather}
Finally $u_\alpha$ converges to $u$ in $W^{2,2}(\Omega,\nu)$. For every $n\in\N$ and $\alpha\in(0,1]$ we set $f_\alpha^{(n)}:=\lambda u_{\alpha}^{(n)}-L_{\nu_\alpha} u_{\alpha}^{(n)}$, then the following equality holds
\begin{gather}\label{problema integrato}
\lambda\int_\Omega u_{\alpha}^{(n)}\varphi d\nu_\alpha-\int_\Omega \varphi L_{\nu_\alpha} u_\alpha^{(n)}d\nu_\alpha=\int_\Omega f_\alpha^{(n)}\varphi d\nu_\alpha
\end{gather}
for every $\varphi\in\fcon^\infty_b(X)$. By \cite[Proposition 5.3]{Fer15} we get that if $\psi\in\fcon^2_b(X)$ then
\begin{gather}\label{Lk:espressione esplicita}
L_{\nu_\alpha}\psi=\sum_{i=1}^{+\infty}\partial_{ii}\psi-\sum_{i=1}^{+\infty}(\partial_i V_\alpha+\hat{e}_i)\partial_i\psi
\end{gather}
where the series converges in $\elle^2(X,\nu_\alpha)$. Since $\elle^2(\Omega,\nu_\alpha)\subseteq\elle^2(\Omega,\nu)$, the series \eqref{Lk:espressione esplicita} also converges in $\elle^2(\Omega,\nu)$.
Using equality \eqref{Lk:espressione esplicita} and the integration by parts formula (Theorem \ref{divergence theorem with traces}) we get
\begin{gather*}
\int_\Omega \varphi L_{\nu_\alpha} u_\alpha^{(n)}d\nu_\alpha=\int_\Omega\varphi\pa{\sum_{i=1}^{+\infty}\partial_{ii}u_\alpha^{(n)}-(\partial_i V_\alpha+\hat{e}_i)\partial_iu_\alpha^{(n)}}d\nu_\alpha=\\
=\sum_{i=1}^{+\infty}\int_\Omega\varphi\pa{\partial_{ii}u_\alpha^{(n)}-(\partial_i V_\alpha+\hat{e}_i)\partial_iu_\alpha^{(n)}}d\nu_\alpha=\\
=\sum_{i=1}^{+\infty}\pa{-\int_\Omega\partial_i\varphi\partial_iu_\alpha^{(n)}d\nu_k+\int_{G^{-1}(0)}\varphi\trace(\partial_i u_\alpha^{(n)})\trace\pa{\frac{\partial_i G}{\abs{\nabla_H G}_H}}e^{-U_\alpha}d\rho}=\\
=-\int_\Omega\gen{\nabla_H\varphi,\nabla_H u_\alpha^{(n)}}_Hd\nu_\alpha+\int_{G^{-1}(0)}\varphi\gen{\trace(\nabla_Hu_\alpha^{(n)}),\trace\pa{\frac{\nabla_H G}{\abs{\nabla_H G}_H}}}_H e^{-U_\alpha}d\rho.
\end{gather*}
Arguing as in Lemma \ref{lemma tecnico} and recalling \eqref{convergenza W22 unk} we get
\[\lim_{n\ra+\infty}\int_{G^{-1}(0)}\varphi\gen{\trace(\nabla_Hu_\alpha^{(n)}),\trace\pa{\frac{\nabla_H G}{\abs{\nabla_H G}_H}}}_H e^{-U_\alpha}d\rho=\int_{G^{-1}(0)}\varphi\gen{\trace(\nabla_Hu_\alpha),\trace\pa{\frac{\nabla_H G}{\abs{\nabla_H G}_H}}}_H e^{-U_\alpha}d\rho.\]
By \eqref{convergenza L2 problema con k e n} and Proposition \ref{trace continuity}, letting $n\ra+\infty$ in equality \eqref{problema integrato} we get
\begin{gather}\label{problema integrato con alpha}
\lambda\int_\Omega u_{\alpha}\varphi d\nu_\alpha+\int_\Omega\gen{\nabla_H\varphi,\nabla_H u_\alpha}_Hd\nu_\alpha-\int_{G^{-1}(0)}\varphi\gen{\trace(\nabla_Hu_\alpha),\trace\pa{\frac{\nabla_H G}{\abs{\nabla_H G}_H}}}_H e^{-U_\alpha}d\rho=\int_\Omega f\varphi d\nu_\alpha.
\end{gather}
By Theorem \ref{Stime per lip} we get
\begin{gather*}
\begin{array}{c}
\displaystyle\int_\Omega u_\alpha\varphi e^{-V_\alpha}d\mu\leq \norm{\varphi}_\infty\pa{\int_\Omega u_\alpha^2e^{-V_\alpha}d\mu}^{\frac{1}{2}}\pa{\int_\Omega e^{-V_\alpha}d\mu}^{\frac{1}{2}}\leq \\
\displaystyle\leq \frac{\norm{\varphi}_\infty}{\lambda}\pa{\int_\Omega f^2 e^{-V_\alpha}d\mu}^{\frac{1}{2}}\pa{\int_\Omega e^{-V_1}d\mu}^{\frac{1}{2}}\leq \frac{\norm{f}_\infty\norm{\varphi}_\infty}{\lambda}\int_\Omega e^{-V_1}d\mu\leq \frac{\norm{f}_\infty\norm{\varphi}_\infty}{\lambda}\int_X e^{-V_1}d\mu,
\end{array}
\end{gather*}
and
\begin{gather*}
\begin{array}{c}
\displaystyle\int_\Omega \gen{\nabla_H u_\alpha,\nabla_H\varphi}_H e^{-V_\alpha}d\mu\leq \norm{\abs{\nabla_H\varphi}_H}_\infty\pa{\int_\Omega \abs{\nabla_H u_\alpha}_H^2e^{-V_\alpha}d\mu}^{\frac{1}{2}}\pa{\int_\Omega e^{-V_\alpha}d\mu}^{\frac{1}{2}}\leq \\
\displaystyle\leq \frac{\norm{\abs{\nabla_H\varphi}_H}_\infty}{\sqrt{\lambda}}\pa{\int_\Omega f^2 e^{-V_\alpha}d\mu}^{\frac{1}{2}}\pa{\int_\Omega e^{-V_1}d\mu}^{\frac{1}{2}}\\
\displaystyle\leq \frac{\norm{f}_\infty\norm{\abs{\nabla_H \varphi}_H}_\infty}{\sqrt{\lambda}}\int_\Omega e^{-V_1}d\mu\leq \frac{\norm{f}_\infty\norm{\abs{\nabla_H \varphi}_H}_\infty}{\sqrt{\lambda}}\int_X e^{-V_1}d\mu.
\end{array}
\end{gather*}
Moreover we have
\begin{gather*}
\int_\Omega f\varphi e^{-V_\alpha}d\mu\leq\norm{f}_\infty\norm{\varphi}_\infty\int_\Omega e^{-V_1}d\mu\leq\norm{f}_\infty\norm{\varphi}_\infty\int_X e^{-V_1}d\mu.
\end{gather*}
By Lemma \ref{lemma tecnico} the map
\[x\mapsto \varphi(x)\gen{\nabla_Hu_\alpha(x),\nabla_H G(x)}_H\frac{e^{-V_\alpha(x)}}{\abs{\nabla_H G(x)}_H}=:F_\alpha(x)\]
belongs to $W^{1,r}(\Omega,\mu)$, for every $1< r<2$. In particular $\trace F_\alpha\in\elle^q(G^{-1}(0),\rho)$ for every $1< q<2$.
Taking the limit $\alpha\ra 0^+$ in equality \eqref{problema integrato con alpha}, by Proposition \ref{check proprieta V alpha} and the Lebesgue dominated convergence theorem we get
\begin{gather}\label{limite fatto}
\lambda\int_\Omega u\varphi d\nu+\int_\Omega\gen{\nabla_H\varphi,\nabla_H u}_Hd\nu-\int_{G^{-1}(0)}\varphi\gen{\trace(\nabla_Hu),\trace\pa{\frac{\nabla_H G}{\abs{\nabla_H G}_H}}}_H e^{-U}d\rho=\int_\Omega f\varphi d\nu.
\end{gather}
Taking into consideration equality \eqref{weak solution}, then equality \eqref{limite fatto} becomes
\[\int_{G^{-1}(0)}\varphi\gen{\trace(\nabla_Hu),\trace\pa{\frac{\nabla_H G}{\abs{\nabla_H G}_H}}}_H e^{-U}d\rho=0,\]
for every $\varphi\in\fcon^\infty_b(X)$. By Proposition \ref{densita nel bordo} we get $\gen{\trace(\nabla_Hu)(x),\trace(\nabla_H G)(x)}_H=0$ for $\rho$-a.e. $x\in G^{-1}(0)$ .
\end{proof}

\section{Examples}\label{Examples}

In this section we show how our theory can be applied to some examples. Let $d\xi$ be the Lebesgue measure on $[0,1]$ and consider the classical Wiener measure $P^W$ on $\con[0,1]$ (see \cite[Example 2.3.11 and Remark 2.3.13]{Bog98} for its construction). The Cameron--Martin space $H$ is the space of the continuous functions $f$ on $[0,1]$ such that $f$ is absolutely continuous, $f'\in\elle^2([0,1],d\xi)$ and $f(0)=0$. Moreover $\abs{f}_H=\norm{f'}_{\elle^2([0,1],d\xi)}$ (see \cite[Lemma 2.3.14]{Bog98}). An orthonormal basis of $\elle^2([0,1],d\xi)$ is given by the functions
\[e_n(\xi)=\sqrt{2}\sin\frac{\xi}{\sqrt{\lambda_n}}\qquad\text{where }\lambda_n=\frac{4}{\pi^2(2 n-1)^2}\text{ for every }n\in\N.\]
We recall that if $f,g\in H$, then
\[\abs{f}^2_H=\sum_{i=1}^{+\infty}{\lambda_i}^{-1}\gen{f,e_i}^2_{\elle^2([0,1],d\xi)},\qquad \gen{f,g}_H=\sum_{i=1}^{+\infty}{\lambda_i}^{-1}\gen{f,e_i}_{\elle^2([0,1],d\xi)}\gen{g,e_i}_{\elle^2([0,1],d\xi)}.\]
Finally we remind the reader that an orthonormal basis for $H$ is given by the sequence $\{\sqrt{\lambda_k}e_k\,|\, k\in\N\}$.

\subsection{Admissible sets}

Let
\[G_{\sigma,c}(f)=\int_0^1f(\xi)d\sigma(\xi)-c,\qquad G^{(r)}(f)=\int_0^1\abs{f(\xi)}^2d\xi-r^2,\]
where $\sigma$ is a finite, non everywhere zero, Borel measure in $[0,1]$, $f\in\con[0,1]$ and $c,r\in\R$. Observe that the sets $G_{\sigma,c}^{-1}(-\infty,0]$ are halfspaces, since $G_{\sigma,c}\in (\con[0,1])^*$. Now we show that $G_{\sigma,c}$ and $G^{(r)}$ satisfy Hypothesis \ref{ipotesi dominio}.

Easy calculations give
\begin{gather}\label{gradienti G in esempi}
\nabla_H G_{\sigma,c}(f)=\sum_{i=1}^{+\infty} \sqrt{\lambda_i}\pa{\int_0^1 e_i(\xi)d\sigma(\xi)}(\sqrt{\lambda_i}e_i),\\
\nabla_H G^{(r)}(f)=2\sum_{i=1}^{+\infty}\sqrt{\lambda_i}\pa{\int_0^1f(\xi)e_i(\xi)d\xi}(\sqrt{\lambda_i}e_i).\label{gradienti G in esempi 2}
\end{gather}
So
\begin{gather*}
\abs{\nabla_H G_{\sigma,c}(f)}_H^2=\sum_{i=1}^{+\infty}\lambda_i\pa{\int_0^1 e_i(\xi)d\sigma(\xi)}^2,\\
\abs{\nabla_H G^{(r)}(f)}_H^2=4\sum_{i=1}^{+\infty}\lambda_i\pa{\int_0^1 f(\xi)e_i(\xi)d\xi}^2
\end{gather*}
Since $\sigma$ is non everywhere zero, then $\abs{\nabla_H G_{\sigma,c}(f)}_H$ is a non zero constant. So $\abs{\nabla_H G_{\sigma,c}}_H^{-1}$ belongs to every $\elle^q(\con[0,1],P^W)$ for every $q>1$. Now let $q>1$ and fix an integer $K$ bigger than a $q$, then
\begin{gather*}
\int_{\con[0,1]}\frac{1}{\abs{\nabla_H G^{(r)}(f)}_H^q}dP^W(f)=2^{-q}\int_{\con[0,1]}\pa{\sum_{i=1}^{+\infty}\lambda_i\pa{\int_0^1 f(\xi)e_i(\xi)d\xi}^2}^{-\frac{q}{2}}dP^W(f)\leq\\
\leq 2^{-q}\int_{\con[0,1]}\pa{\sum_{i=1}^{K}\lambda_i\pa{\int_0^1 f(\xi)e_i(\xi)d\xi}^2}^{-\frac{q}{2}}dP^W(f).
\end{gather*}
Since the maps $T:f\mapsto (\int_0^1 f(\xi)e_1(\xi)d\xi,\ldots,\int_0^1 f(\xi)e_K(\xi)d\xi)$ is linear and continuous, we can use the change of variable formula (see \cite[Formula (A.3.1)]{Bog98}) and obtain
\begin{gather}\label{integrazione di 1/}
\int_{\con[0,1]}\frac{1}{\abs{\nabla_H G^{(r)}(f)}_H^q}dP^W(f)\leq 2^{-q}\int_{\R^K}\pa{\sum_{i=1}^K\lambda_i\eta_i^2}^{-\frac{q}{2}}d P^W_K(\eta)\leq \pa{4\lambda_K}^{-\frac{q}{2}}\int_{\R^K}\norm{\eta}^{-q}dP^W_K(\eta),
\end{gather}
where $P^W_K$ the centered $K$-dimensional Gaussian measure given by $P^W_K:=P^W\circ T^{-1}$. The last integral in inequality \eqref{integrazione di 1/} is finite, since we took $K>q$. Thus both $G_{\sigma,c}$ and $G^{(r)}$ satisfy Hypothesis \ref{ipotesi dominio}\eqref{ipo dominio non degeneratezza}. Checking Hypothesis \ref{ipotesi dominio}\eqref{ipo dominio convessita e chiusura} is trivial.

Finally we have for $f\in\con[0,1]$
\begin{gather*}
\nabla_H^2 G_{\sigma,c}(f)=0,\qquad \nabla_H^2 G^{(r)}(f)=2\sum_{i=1}^{+\infty}\lambda_i\pa{(\sqrt{\lambda_i}e_i)\otimes(\sqrt{\lambda_i}e_i)}.
\end{gather*}
In particular $\|\nabla_H^2 G^{(r)}(f)\|^2_{\mathcal{H}_2}=\sum_{i=1}^{+\infty}\lambda_i^2=1/6$. Then $G_{\sigma,c}$ and $G^{(r)}$ satisfy all the conditions of Hypothesis \ref{ipotesi dominio}. We set $\Omega_{\sigma,c}:=G_{\sigma,c}^{-1}(-\infty,0]$ and $\Omega^{(r)}:=(G^{(r)})^{-1}(-\infty,0]$.

\subsection{An example of admissible weight (1)}

Let $\tau$ be a finite positive Borel measure in $[0,1]$. Consider the function $U:\con[0,1]\ra\R$ defined as
\[U(f)=\Phi\pa{\int_0^1 f(\xi)d\tau(\xi)},\]
where $\Phi:\R\ra\R$ is a $\con^1$ convex function such that for $\xi\in\R$
\begin{gather}\label{andamento della derivata di Phi}
\abs{\Phi'(\xi)}\leq C e^{\beta \abs{\xi}},
\end{gather}
for some $C\geq 0$ and $\beta>0$. Easy computations give that $U$ is a convex and continuous function. Using the fundamental theorem of calculus we get for every $\xi\in\R$
\begin{gather*}
\abs{\Phi(\xi)}\leq \abs{\Phi(0)}+\frac{C}{\beta}e^{\beta\abs{\xi}}.
\end{gather*}
So
\begin{gather*}
\abs{U(f)}=\abs{\Phi\pa{\int_0^1f(\xi)d\tau(\xi)}}\leq\abs{\Phi(0)}+\frac{C}{\beta}e^{\beta\abs{\int_0^1 f(\xi)d\tau(\xi)}}\leq \abs{\Phi(0)}+\frac{C}{\beta}e^{\beta\norm{f}_\infty\norm{\tau}_{(\con[0,1])^*}}.
\end{gather*}
Therefore, by Fernique theorem, $U$ belongs to $\elle^t(\con[0,1],P^W)$ for every $t\geq 1$.

Observe that $U$ is Frech\'et differentiable with continuous derivative, since it is the composition of a element of $(\con[0,1])^*$ and a $\con^1(\R)$ function. By the chain rule for every $f,g\in\con[0,1]$ we have
\begin{gather*}
U'(f)(g)=\Phi'\pa{\int_0^1 f(\xi)d\tau(\xi)}\int_0^1g(\xi)d\tau(\xi).
\end{gather*}
So
\begin{gather*}
\abs{\nabla_H U(f)}_H^2=\sum_{n=1}^{+\infty}\abs{\partial_n U(f)}^2=\sum_{n=1}^{+\infty}\abs{U'(f)(\sqrt{\lambda_n}e_n)}^2=\\
=\pa{\Phi'\pa{\int_0^1 f(\xi)d\tau(\xi)}}^2\sum_{n=1}^{+\infty}\lambda_n\pa{\int_0^1e_n(\xi)d\tau(\xi)}^2\leq\\
\leq 2(\tau([0,1]))^2\pa{\Phi'\pa{\int_0^1 f(\xi)d\tau(\xi)}}^2\sum_{n=1}^{+\infty}\lambda_n=(\tau([0,1]))^2\pa{\Phi'\pa{\int_0^1 f(\xi)d\tau(\xi)}}^2.
\end{gather*}
By using inequality \eqref{andamento della derivata di Phi} we get
\begin{gather*}
\abs{\nabla_H U(f)}_H^2\leq C^2 (\tau([0,1]))^2 e^{2\beta\abs{\int_0^1 f(\xi)d\tau(\xi)}}\leq C^2 (\tau([0,1]))^2 e^{2\beta \norm{f}_\infty\norm{\tau}_{(\con[0,1])^*}}.
\end{gather*}
So, by Fernique's theorem, we get that $U$ belongs to $W^{1,t}(\con[0,1],P^W)$ for every $t\geq 1$. This implies that $U$ satisfies Hypothesis \ref{ipotesi peso}, since checking convexity and continuity of $U$ is trivial.

Consider the problem
\begin{gather}\label{Problema su semispazi}
\lambda u(f)-L_{e^{-U}P^W,\Omega_{\sigma,c}}u(f)=g(f),
\end{gather}
with data $\lambda>0$ and $g\in \elle^2(\Omega_{\sigma,c},e^{-U}P^W)$. By using Theorem \ref{Main theorem 1} we get that for every $\lambda>0$ and $g\in\elle^2(\Omega_{\sigma,c},e^{-U}P^W)$ problem \eqref{Problema su semispazi} has an unique weak solution $u\in W^{2,2}(\Omega_{\sigma,c},e^{-U}P^W)$. In addition the following inequalities hold
\begin{gather*}
\norm{u}_{\elle^2(\Omega_{\sigma,c},e^{-U}P^W)}\leq\frac{1}{\lambda}\norm{g}_{\elle^2(\Omega_{\sigma,c},e^{-U}P^W)};\qquad \norm{\nabla_H u}_{\elle^2(\Omega_{\sigma,c},e^{-U}P^W;H)}\leq\frac{1}{\sqrt{\lambda}}\norm{g}_{\elle^2(\Omega_{\sigma,c},e^{-U}P^W)};\\
\|\nabla_H^2 u\|_{\elle^2(\Omega_{\sigma,c},e^{-U}P^W;\mathcal{H}_2)}\leq \sqrt{2}\norm{g}_{\elle^2(\Omega_{\sigma,c},e^{-U}P^W)}.
\end{gather*}
Furthermore by Theorem \ref{inclusione dominio} we get that for $\rho$-a.e. $f\in G_{\sigma,c}^{-1}(0)$
\[\gen{\trace (\nabla_H u)(f),\trace (\nabla_H G_{\sigma,c})(f)}_H=0,\]
then by equality \eqref{gradienti G in esempi} we get for $\rho$-a.e. $f\in\con[0,1]$ with $\int_0^1f(\xi)d\sigma(\xi)=c$,
\[\sum_{i=1}^{+\infty}\sqrt{\lambda_i}(\trace \partial_i u(f))\pa{\int_0^1 e_i(\xi)d\sigma(\xi)}=0.\]

In a similar way by using Theorem \ref{Main theorem 1} to the problem
\begin{gather}\label{Problema su bolla}
\lambda u(f)-L_{e^{-U}P^W,\Omega^{(r)}}u(f)=g(f),
\end{gather}
with data $\lambda>0$ and $g\in \elle^2(\Omega^{(r)},e^{-U}P^W)$, we get that problem \eqref{Problema su bolla} has an unique weak solution $u\in W^{2,2}(\Omega^{(r)},e^{-U}P^W)$. In addition the following inequalities hold
\begin{gather*}
\norm{u}_{\elle^2(\Omega^{(r)},e^{-U}P^W)}\leq\frac{1}{\lambda}\norm{g}_{\elle^2(\Omega^{(r)},e^{-U}P^W)};\qquad \norm{\nabla_H u}_{\elle^2(\Omega^{(r)},e^{-U}P^W;H)}\leq\frac{1}{\sqrt{\lambda}}\norm{g}_{\elle^2(\Omega^{(r)},e^{-U}P^W)};\\
\|\nabla_H^2 u\|_{\elle^2(\Omega^{(r)},e^{-U}P^W;\mathcal{H}_2)}\leq \sqrt{2}\norm{g}_{\elle^2(\Omega^{(r)},e^{-U}P^W)}.
\end{gather*}
Moreover by Theorem \ref{inclusione dominio} we get that for $\rho$-a.e. $f\in (G^{(r)})^{-1}(0)$
\[\gen{\trace (\nabla_H u)(f),\trace (\nabla_H G^{(r)})(f)}_H=0,\]
then by equality \eqref{gradienti G in esempi 2} we get for $\rho$-a.e. $f\in\con[0,1]$ with $\norm{f}_2=r$,
\[\sum_{i=1}^{+\infty}\sqrt{\lambda_i}(\trace \partial_i u(f))\pa{\int_0^1 f(\xi)e_i(\xi)d\xi}=0.\]

\subsection{An example of admissible weight (2)}
Throughout this subsection we will assume that the following hypothesis holds.

\begin{hyp}\label{ipo esempio}
Let $\Psi \in \con^1(\R\times[0,1])$ be such that
\begin{enumerate}
\item for every fixed $r\in [0,1]$, the function $\Psi(\cdot,r)$ is convex;

\item\label{ipotesi_su_C(t)} for all $s\in\R$ and $r\in[0,1]$ we have
\[\abs{\frac{\partial \Psi}{\partial s}(s,r)}\leq C(r) e^{\beta \abs{s}}\]
where $\beta>0$ and $C(\cdot)$ is a non-negative function belonging to $\elle^2([0,1],d\xi)$;
\end{enumerate}
\end{hyp}
\noindent We want to show that the weight
\[U(f):=\int_0^1 \Psi(f(\xi),\xi)d\xi,\qquad f\in\con[0,1],\]
satisfies Hypothesis \ref{ipotesi peso}. First we remark that
\[\abs{\Psi(s,r)}\leq \abs{\Psi(0,r)}+C(r)\frac{e^{\beta \abs{s}}}{\beta}.\]
So for every $f\in\con[0,1]$ we get $\abs{U(f)}\leq\norm{\Psi(0,\cdot)}_\infty+\beta^{-1}\norm{C}_{\elle^2([0,1],d\xi)}e^{\beta\norm{f}_\infty}$, and by Fernique's theorem $U$ belongs to $\elle^t(\con[0,1],P^W)$ for every $t\geq 1$.

Observe that $U$ is a Frech\'et differentiable since it is the composition of a $\con^1$ function and a smooth function. In addition for every $f,g\in\con[0,1]$
\begin{gather*}
U'(f)(g)=\int_0^1\frac{\partial \Psi}{\partial s}(f(\xi),\xi)g(\xi)d\xi.
\end{gather*}
So we get
\begin{gather*}
\abs{\nabla_H U(f)}_H^2=\sum_{n=1}^{+\infty}\abs{\partial_n U(f)}^2=\sum_{n=1}^{+\infty}\abs{U'(f)(\sqrt{\lambda_n}e_n)}^2=\\
=\sum_{n=1}^{+\infty}\abs{\int_0^1\frac{\partial \Psi}{\partial s}(f(\xi),\xi)\sqrt{\lambda_n}e_n(\xi)d\xi}^2\leq 2\sum_{n=1}^{+\infty}\lambda_n\int_0^1\abs{\frac{\partial \Psi}{\partial s}(f(\xi),\xi)}^2d\xi.
\end{gather*}
Then by Hypothesis \ref{ipo esempio}\eqref{ipotesi_su_C(t)} we get
\begin{gather*}
\abs{\nabla_H U(f)}_H^2\leq 2\sum_{n=1}^{+\infty}\lambda_n\int_0^1C^2(\xi)e^{2\beta\abs{f(\xi)}}d\xi\leq e^{2\beta\norm{f}_\infty} \norm{C}_{\elle^2([0,1],d\xi)}^2.
\end{gather*}
Therefore, by Fernique's theorem, we get that $U$ belongs to $W^{1,t}(\con[0,1],P^W)$ for every $t\geq 1$.
So $U$ satisfies Hypothesis \ref{ipotesi peso}, since checking convexity and continuity is trivial.

Consider the problem
\begin{gather}\label{Problema su semispazi_bis}
\lambda u(f)-L_{e^{-U}P^W,\Omega_{\sigma,c}}u(f)=g(f),
\end{gather}
with data $\lambda>0$ and $g\in \elle^2(\Omega_{\sigma,c},e^{-U}P^W)$. By using Theorem \ref{Main theorem 1} we get that for every $\lambda>0$ and $g\in\elle^2(\Omega_{\sigma,c},e^{-U}P^W)$ problem \eqref{Problema su semispazi_bis} has an unique weak solution $u\in W^{2,2}(\Omega_{\sigma,c},e^{-U}P^W)$, and the following inequality holds
\begin{gather*}
\norm{u}_{W^{2,2}(\Omega_{\sigma,c},e^{-U}P^W)}\leq\pa{\frac{1}{\lambda}+\frac{1}{\sqrt{\lambda}}+\sqrt{2}}\norm{g}_{\elle^2(\Omega_{\sigma,c},e^{-U}P^W)}.
\end{gather*}
Furthermore by Theorem \ref{inclusione dominio} we get that for $\rho$-a.e. $f\in G_{\sigma,c}^{-1}(0)$
\[\gen{\trace (\nabla_H u)(f),\trace (\nabla_H G_{\sigma,c})(f)}_H=0,\]
then by equality \eqref{gradienti G in esempi} we get for $\rho$-a.e. $f\in\con[0,1]$ with $\int_0^1f(\xi)d\sigma(\xi)=c$,
\[\sum_{i=1}^{+\infty}\sqrt{\lambda_i}(\trace \partial_i u(f))\pa{\int_0^1 e_i(\xi)d\sigma(\xi)}=0.\]

In a similar way, by Theorem \ref{Main theorem 1}, the problem
\begin{gather}\label{Problema su bolla_bis}
\lambda u(f)-L_{e^{-U}P^W,\Omega^{(r)}}u(f)=g(f),
\end{gather}
has an unique weak solution $u\in W^{2,2}(\Omega^{(r)},e^{-U}P^W)$, whenever $\lambda>0$ and $g\in \elle^2(\Omega^{(r)},e^{-U}P^W)$. In addition
\begin{gather*}
\norm{u}_{W^{2,2}(\Omega_{\sigma,c},e^{-U}P^W)}\leq\pa{\frac{1}{\lambda}+\frac{1}{\sqrt{\lambda}}+\sqrt{2}}\norm{g}_{\elle^2(\Omega_{\sigma,c},e^{-U}P^W)}.
\end{gather*}
Moreover, by Theorem \ref{inclusione dominio}, if $u$ is the weak solution of \eqref{Problema su bolla_bis}, then for $\rho$-a.e. $f\in (G^{(r)})^{-1}(0)$
\[\gen{\trace (\nabla_H u)(f),\trace (\nabla_H G^{(r)})(f)}_H=0,\]
then by equality \eqref{gradienti G in esempi 2} we get for $\rho$-a.e. $f\in\con[0,1]$ with $\norm{f}_2=r$,
\[\sum_{i=1}^{+\infty}\sqrt{\lambda_i}(\trace \partial_i u(f))\pa{\int_0^1 f(\xi)e_i(\xi)d\xi}=0.\]

\begin{ack}
The authors would like to thank Alessandra Lunardi for many useful discussions and comments. This research was partially supported by the PRIN2010/11 grant ``Evolution differential problems: deterministic and stochastic approaches and their interactions''.
\end{ack}

\bibliographystyle{chicago}
\nocite{*} 
\bibliography{bibpesoconvesso}

\end{document}